\theoremstyle{plain}
\newtheorem{lemma}{Lemma}[section]
\newtheorem{proposition}[lemma]{Proposition}
\newtheorem{theorem}[lemma]{Theorem}
\newtheorem*{theoremnonr}{Theorem}
\newtheorem{corollary}[lemma]{Corollary}
\theoremstyle{definition}
\newtheorem{definition}[lemma]{Definition}
\newtheorem{remark}[lemma]{Remark}
\newcommand{\N}{\mathbb{N}}
\newcommand{\Z}{\mathbb{Z}}
\newcommand{\R}{\mathbb{R}}
\newcommand{\Bipyramid}{{\mathbb{B}_n}}
\newcommand{\Penta}{{\mathbb{B}_5}}
\newcommand{\Top}{\mathsf{T}}
\newcommand{\Bottom}{\mathsf{B}}
\newcommand{\Pyramid}[1]{\mathbb{P}_{#1}}
\newcommand{\Tetrahedron}[2]{\mathbb{T}_{{#1},{#2}}}
\newcommand{\OCM}{\mathrm{CM}^{\ast}_n}
\newcommand{\Eq}[1][n]{{\mathcal{E}_{#1}}}
\newcommand{\motion}{\mathcal{M}}
\newcommand{\Gal}{\mathscr{G}}
\newcommand{\ones}[1]{{#1}^{(\mathbf{1})}}
\newcommand{\distance}{d_{\Top\Bottom}}
\newcommand{\Oct}{\mathbb{O}}
\newcommand{\kasperl}{\mathsf{k}}
\newcommand{\pezi}{\mathsf{p}}
\newcommand{\Subdivided}{\mathbb{S}}
\newcommand{\New}{\mathsf{N}}
\colorlet{colbg}{white}
\colorlet{colfg}{black}
\colorlet{colG}{DarkSeaGreen}
\definecolor{colR}{HTML}{CC6677}
\definecolor{colO}{HTML}{DDCC77}
\definecolor{colB}{HTML}{6699CC}
\colorlet{vcol}{colfg!75!white}
\colorlet{ecol}{colfg!55!white}
\tikzstyle{vertex}=[fill=vcol,circle,inner sep=0pt, minimum size=4pt]
\tikzstyle{edge}=[line width=1.5pt,ecol]
\tikzstyle{labelsty}=[font=\scriptsize]
\tikzstyle{cdiag}=[line width=0.5pt,-{Classical TikZ Rightarrow[]}]
\tikzstyle{cdiagd}=[cdiag,dashed]
\tikzstyle{hedge}=[edge,colB]
\tikzstyle{oedge}=[edge,ecol!50!white]
\title{Pentagonal bipyramids lead to the smallest flexible embedded polyhedron}
\date{}
\author{%
Matteo Gallet$^{\diamond}$%
\and
Georg Grasegger$^{\ast}$%
\and
Jan Legersk\'y$^{\circ}$%
\and
Josef Schicho%
}
\renewcommand{\thefootnote}{\fnsymbol{footnote}}
\begin{document}
\maketitle
\footnotetext{\hspace{0.15cm}$^\circ$ Supported by the Czech Science Foundation (GAČR), project No. 22-04381L.\\%
$^\diamond$ The researcher is a member of ``Gruppo Nazionale per le Strutture Algebriche, Geometriche e le loro Applicazioni'', INdAM.\\%
$^\ast$ Supported by the Austrian Science Fund (FWF): 10.55776/I6233.\\\\%
This research was funded in whole, or in part, by the Austrian Science Fund (FWF) 10.55776/I6233. For the purpose of open access, the authors have applied a CC BY public copyright license to any Author Accepted Manuscript version arising from this submission.
}
\begin{abstract}
 Steffen's polyhedron
 was believed to have the least number of vertices among
 polyhedra that can flex without self-intersections.
 Maksimov clarified that the pentagonal bipyramid
 with one face subdivided into three is the only polyhedron with fewer vertices for which the existence of a self-intersection-free flex was open.
 Since subdividing a face into three does not change the mobility, we focus on flexible pentagonal bipyramids.
 When a bipyramid flexes,
 the distance between the two opposite vertices of the two pyramids changes;
 associating the position of the bipyramid to this distance
 yields an algebraic map that determines a nontrivial extension
 of rational function fields.
 We classify flexible pentagonal bipyramids
 with respect to the Galois group of this field extension
 and provide examples for each class, building on a construction proposed by Nelson.
 Surprisingly, one of our constructions yields a flexible pentagonal bipyramid
 that can be extended to an embedded flexible polyhedron with 8 vertices.
 The latter hence solves the open question.
\end{abstract}
\renewcommand{\thefootnote}{\arabic{footnote}}

\section{Introduction}
\label{introduction}

Eighty years after Bricard's description of three families
of self-intersecting flexible octahedra (see~\cite{Bricard1897}),
Connelly constructed an example of a flexible embedded polyhedron,
namely one in which any two faces intersect only at their common edge (see \cite{Connelly1977}).
Connelly's construction yields a polyhedron with $18$ vertices (see~\cite{Connelly1979} and~\cite[Remark~4]{Kuiper1979})
but later Steffen \cite{Steffen1978} came up with a construction
of a flexible embedded polyhedron with only $9$ vertices,
which is therefore now known as \emph{Steffen's polyhedron}.
Very recently, an example
of a flexible embedded polyhedron with $26$ vertices,
all of whose dihedral angles change during a flex, was constructed \cite{Alexandrov2024}.
Maksimov in \cite{Maksimov1995} states that there are no flexible embedded polyhedra
with less than $9$ vertices; this result has been referenced in~\cite{Cromwell1997} and~\cite{Demaine2002}.
However, in~\cite{Maksimov2008} Maksimov points out that there is a single case,
among polyhedra with $8$ vertices, for which the non-existence of flexible embedded instances is still not clear.
Both Gaifullin in~\cite{Gaifullin2018} and Alexandrov in~\cite{Alexandrov2020} agree
that the question whether $9$ is the minimal number of vertices for embedded flexibility is still open.

The missing case,
for which the existence of a flexible embedded polyhedron has not been proved or disproved yet,
is that of a polyhedron obtained from a pentagonal bipyramid
one of whose faces has been subdivided into three;
in other words, we remove a face, add a new vertex, and add three new faces determined by the new vertex and the three edges of the removed face.
Here, by an \emph{$n$-gonal bipyramid} we mean a polyhedron whose combinatorial structure is given by an $n$-cycle all of whose vertices are connected to two further ones, which are not neighbors. These objects are also called \emph{suspensions} and the cycle is called \emph{equator}; see \cite{Alexandrov2011, Connelly1978, Maksimov1994}. We usually refer to $4$-gonal bipyramids as to \emph{octahedra}.

The mobility does not essentially change by subdividing a face,
that is why we focused our attention on flexible pentagonal bipyramids.
We tried to adopt the technique developed in~\cite{Gallet2020}
(which uses the tools introduced in \cite{Gallet2019})
to classify all the possible non-degenerate motions\footnote{Here by ``degenerate''
we mean a motion during which two faces stay always coplanar or, equivalently, a dihedral angle at an edge is frozen.} of such a polyhedron,
but unfortunately this led only to partial results.
Therefore, we decided to pursue a coarser classification,
based on an invariant which we call the \emph{Galois group of a motion}.

In a non-degenerate motion of a pentagonal bipyramid
the distance between the two non-equatorial vertices is not constant.
Therefore, the map that associates such distance to a configuration is not constant.
This means that, once we can think of the set of configurations of the bipyramid
during the motion as of an algebraic variety, what we get is a rational function on the configuration space.
In turn, this determines a field extension between the field of rational functions on the affine line
(where the distance takes its values) and the field of rational functions on the configuration space.
As for every field extension, we can define its Galois group,
which is then the Galois group of the given motion.

The first main result of this paper is the following
(see \Cref{proposition:galois_groups} and \Cref{theorem:nelson_Z2}, \Cref{theorem:nelson_opposite}):

\begin{theoremnonr}
 There are only two possibilities for the Galois group of a non-degenerate motion
 of a pentagonal bipyramid.
 Both these possibilities arise as Galois groups of some particular flexible pentagonal bipyramids
 obtained via \emph{Nelson constructions}.
\end{theoremnonr}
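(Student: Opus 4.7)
The plan is to split the statement into a classification piece and a realization piece. The classification part---that at most two Galois groups can appear---will come from building an explicit algebraic model of the motion and analyzing the resulting field extension, while the realization part will come from explicit computation on Nelson's construction.

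For the algebraic model, I would fix the top apex $\Top$ at the origin and the bottom apex $\Bottom$ on a coordinate axis, at variable signed distance $d=\distance$. Since the edge lengths $|\Top v_i|$ and $|\Bottom v_i|$ are fixed, each equatorial vertex $v_i$ lies on a circle $C_i(d)$ whose centre and radius are rational functions of $d$. After introducing a rational parameter $t_i$ on each $C_i(d)$, the five equator-edge equations $|v_i v_{i+1}|^2=\text{const}$ become a cyclic chain of five polynomial equations, each quadratic in $t_i$ and in $t_{i+1}$ with coefficients in $\Q(d)$. Quotienting by the axial rotation yields an explicit algebraic set $X\subset \mathbb{A}^1_d\times(\mathbb{A}^1)^5$, and $\delta\colon X\to\mathbb{A}^1$ is the projection onto $d$. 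The Galois group of the motion is the Galois group of the Galois closure of $\Q(X)/\Q(d)$.

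The key step is then to constrain this Galois group. Each equator equation, being quadratic in $t_{i+1}$ with coefficients in $\Q(d)(t_i)$, produces a ``sign choice'' whose ambiguity sits in a quadratic extension; composing the five choices around the cycle gives a natural ambient elementary abelian $2$-group into which the Galois group embeds. By analyzing which of its subgroups can actually be realized, one cuts the list down to two. The main obstacle, and in my view the real content of the argument, is excluding ``sporadic'' subgroups coming from accidental factorizations of the cyclic system; I would handle this by showing that any additional $\Q(d)$-rational relation among the $t_i$ forces the dihedral angle along some edge to be constant, contradicting the non-degeneracy of the motion (in the sense of the footnote). This is the content of \Cref{proposition:galois_groups}.

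For the realization, I would take two one-parameter families of pentagonal bipyramids arising from the Nelson construction recalled earlier in the paper and compute their Galois groups directly. In each case I would specialize the Nelson parameters, substitute into the algebraic model above, and extract the minimal polynomial of a suitable primitive element over $\Q(d)$. Its Galois group can then be identified either by exhibiting rational subfields of the right dimensions (detecting imprimitivity corresponding to geometric involutions of the bipyramid), or by reducing modulo several primes and reading off cycle types via Chebotarev, yielding \Cref{theorem:nelson_Z2} and \Cref{theorem:nelson_opposite}.
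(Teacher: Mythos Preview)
Your architecture---embed $\Gal$ in an elementary abelian $2$-group, constrain the subgroup, realize both survivors via Nelson---matches the paper's, but the central constraint step has a genuine gap. Your proposed exclusion principle, ``any additional $\Q(d)$-rational relation among the $t_i$ forces a constant dihedral angle,'' cannot work as stated: the Galois group is \emph{always} a proper subgroup of $\Z_2^5$ (of order at most~$4$), so nontrivial relations among the generators hold in every non-degenerate motion, and yet no dihedral angle is frozen. The paper's exclusion is element-by-element and rests on three substantive ingredients you do not mention: the vanishing of the generalized volume of a flexible suspension (Connelly/Bellows), which gives $\sum_{e\in\ones{\varphi}} s_e=0$ for every $\varphi\in\Gal$ and forbids pairs of elements whose supports meet in a single edge; a coplanarity lemma showing that a block of consecutive $1$'s in $\varphi$ forces the four vertices at its ends together with $\Top,\Bottom$ to stay coplanar, which combined with non-degeneracy rules out elements with a single $1$ or a single $0$; and, hardest, the exclusion of $(1,1,0,0,0)$, which requires the classification of pyramids flexing over a planar quadrilateral base (antiparallelogram) together with Mikhalev's signed edge-length cycle condition. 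None of this reduces to detecting a frozen dihedral angle.

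A secondary issue concerns the embedding itself: your chain ``$t_{i+1}$ quadratic over $\Q(d)(t_i)$'' does not by itself produce an embedding of $\Gal$ into $\Z_2^5$ over $\Q(d)$, because each step changes the base field and the composite need not be elementary abelian over $\Q(d)$. The paper sidesteps this by working with the oriented tetrahedron volumes $s_{i,i+1}$, each of which satisfies a quadratic equation \emph{directly over} $\R(d_{\Top\Bottom})$, and then proves these five quadratic extensions generate $\R(\motion)$ as a compositum; that is what makes the embedding $\Gal\hookrightarrow\Z_2^{\Eq}$ clean. For the realization half, your plan (minimal polynomials, Chebotarev) is plausible but heavier than needed: the paper shows that $\Gal=\langle 1^5\rangle$ forces two flat poses of the whole bipyramid and that the larger group forces an equality between two equatorial edge lengths, and then simply checks these geometric criteria on the explicit Nelson examples.
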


A \emph{Nelson construction} is a way to obtain a flexible pentagonal bipyramid
by ``gluing'' two flexible Bricard octahedra,
following the procedure described by Nelson in~\cite{Nelson2010} (see also~\cite{Nelson2012}).

We realized that the Nelson construction that we have employed to exhibit a witness for one of the two possible Galois groups, namely, the one involving gluing two Bricard octahedra of so-called Type~I, or \emph{line-symmetric}, could be suitable for finding an example of a flexible embedded polyhedron with $8$ vertices.
Indeed, aided by a computer exploration, we could reach the second main result of this paper
(see \Cref{fig:embedded} and \Cref{theorem:existence_flexible}):

\begin{theoremnonr}
 There exists a flexible embedded polyhedron with $8$ vertices,
 obtained by gluing two line-symmetric octahedra via a Nelson construction
 and then subdividing one of the faces into three by introducing a new vertex.
\end{theoremnonr}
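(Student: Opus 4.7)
The plan is to exhibit an explicit one-parameter family, coming from a Nelson construction on two line-symmetric Bricard octahedra, and then to verify embeddedness pointwise along the flex by a combination of explicit parametrization and numerical/symbolic certification. So the proof splits naturally into three tasks: (1) construct a flexible pentagonal bipyramid that is embedded for every parameter in the open flex interval, (2) subdivide a chosen face by introducing an eighth vertex in such a way that the three new faces are embedded with respect to the remaining seven faces throughout the motion, and (3) certify both embeddedness statements.

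For task~(1), I would start from the Nelson construction (already used earlier in the paper as a witness for one of the two Galois groups) that glues two Type~I Bricard octahedra along a common triangle. Each line-symmetric octahedron carries an explicit flex, parametrized for instance by one dihedral angle along the gluing triangle, and the two octahedra are tuned so that their flexes are compatible at the shared triangle. This gives a one-parameter algebraic family of pentagonal bipyramids in~$\R^3$. I would write down the seven vertices of the bipyramid as explicit algebraic functions of a single parameter~$t$ (or of a rationalizing parameter for the trigonometric identities), and identify a nondegenerate interval of~$t$-values on which the flex is defined.

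For task~(2), I would choose one face of the bipyramid whose plane is ``well separated'' from the rest of the polyhedron along the whole flex, and introduce an eighth vertex~$\New$ near the centroid of this face but slightly displaced outward (or inward) along the outer normal. The three new triangular faces obtained by subdivision replace the original face and differ from it only in a small neighborhood; for a sufficiently small displacement, continuity ensures that no intersections with the remaining faces are created at one particular parameter value. The nontrivial step is to make this work uniformly in~$t$.

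For task~(3), the main obstacle, I would certify embeddedness of the entire bipyramidal family and of the new faces as follows. For each pair of non-adjacent faces, the condition ``the two triangles do not meet'' is a semialgebraic condition in~$t$, expressible as a conjunction of sign conditions on a finite list of polynomials (for instance, using the standard separating-axis style tests or, more systematically, the vanishing of determinants that detect edge–face intersections and point-in-triangle predicates). Combining these over all $\binom{F}{2}$ face pairs produces a semialgebraic set in a single variable, and I would use cylindrical algebraic decomposition, Sturm sequences, or interval arithmetic with verified arithmetic to prove that this set contains a nonempty open interval. Once the bipyramid is certified on such an interval, I would choose the displacement of~$\New$ small enough (bounded by an explicit estimate obtained from the minimal face–face distance on a closed subinterval) so that all new face pairs are likewise disjoint from the other faces. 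The hard part is not conceptual but computational: finding parameter values of the Nelson construction for which the resulting flex interval is long enough, the bipyramid remains embedded on it, and the face chosen for subdivision admits a sufficiently large ``clearance'' to accommodate~$\New$. This is precisely where the computer exploration mentioned in the introduction intervenes, and a single certified numerical example (illustrated in \Cref{fig:embedded}) suffices to establish the theorem.
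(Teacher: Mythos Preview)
Your plan has a fatal gap at step~(1): by Connelly's theorem \cite[Theorem~2]{Connelly1978}, \emph{every} flexible pentagonal bipyramid has self-intersections, so the embedded bipyramidal flex you want to construct simply does not exist. Consequently the perturbative strategy of step~(2)---placing $\New$ close to the centroid of a face that is already ``well separated'' from the rest and invoking continuity---cannot get off the ground: there is no embedded starting configuration to perturb.

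The paper's approach is essentially the opposite of yours. One searches the parameter space of the Type~I Nelson construction not for an embedded bipyramid, but for one whose unavoidable self-intersections all involve a \emph{single} face (in the example found, the face $\{\Bottom,1,2\}$ meets exactly three other faces and there are no further intersections). That face is then removed and replaced by a tent over a new vertex~$\New$ placed \emph{far} from the original triangle, so that the three replacement faces clear the obstacles that the original face hit, while not creating new intersections. This is checked symbolically at a single explicit realization; flexibility is then established by exhibiting an explicit one-parameter curve in the construction parameters that preserves all edge lengths. Embeddedness being an open condition, it persists on a neighbourhood of that realization along the flex. The role of the subdivision here is not a small cosmetic perturbation but the mechanism that actually resolves the self-intersections forced by Connelly's theorem.
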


\begin{figure}[H]
	\centering
	\includegraphics[width=7cm,clip=true,trim= 0.1cm 0.6cm 0.1cm 0.9cm]{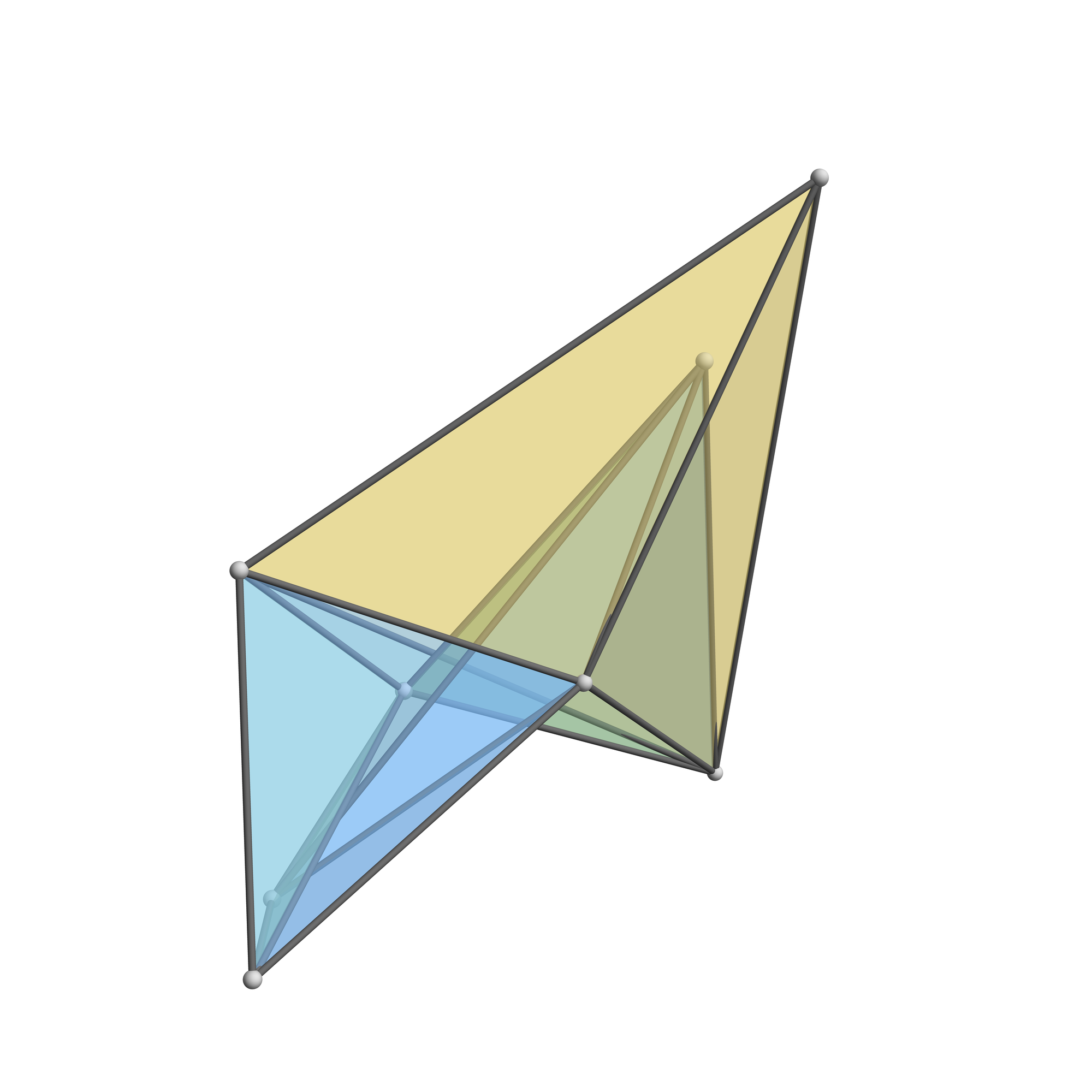}
	\caption{An example of a flexible embedded polyhedron with 8 vertices.}
	\label{fig:embedded}
\end{figure}

Due to the previously mentioned results in the literature, this flexible embedded polyhedron has the minimal possible number of vertices.

The paper is structured as follows.
\Cref{basics} introduces the basic formalism to discuss about realizations and motions of pentagonal bipyramids,
and recalls Nelson constructions.
\Cref{galois} sets up the Galois theory for motions of bipyramids, shows that there are only two possible Galois groups,
and provides an example for each of them via Nelson constructions.
\Cref{example} exhibits a flexible embedded polyhedron with $8$ vertices.

In case the reader is mainly interested in the example of a flexible embedded polyhedron with $8$ vertices,
most of the material in this paper can be skipped.
A roadmap to the example is given by: \Cref{basics:graphs},
\Cref{definition:realization,definition:flexible}, \Cref{basics:octahedra},
the Nelson construction in the proof of \Cref{theorem:nelson_opposite}, and then \Cref{example}.

\section{Basic definitions}
\label{basics}

In this section, we provide some introductory notions about Galois theory and
we define the subgraphs of the skeleton of a bipyramid that are relevant for our arguments.
We also introduce the notions describing flexibility.
We conclude the section with recalling the types of Bricard octahedra and Nelson construction.

\subsection{Basics of Galois theory}

Galois theory is a subfield of algebra and number theory that was originally developed to study the solvability of univariate polynomial equations by investigating a group, called the \emph{Galois group} of the given polynomial. The idea is, given a polynomial $f \in K[x]$, where $K$ is a field, to construct a smallest superfield $F$ of $K$ where $f$ factors into linear polynomials (such a field is called a \emph{splitting field} of $f$ over $K$); then, the Galois group of $f$ is the group of automorphisms of $F$ that fix $K$. This group can be thought as a subgroup of permutations that swap the roots of~$f$: the structure of $f$ determines which permutations are allowed and which are not.

We start by providing some basic definitions for Galois theory. Possible references are, among others, the books \cite{Artin1998, Jacobson1985, Artin1991}.

\begin{definition}
 An \emph{extension of fields} (or \emph{field extension}) is an inclusion of fields $K \subset F$; since every non-trivial homomorphism of fields is injective, we can consider non-trivial homomorphisms of fields as a field extension.
 A field extension $K \subset F$ is called \emph{Galois} if $F$ is a splitting field of a polynomial $f \in K[x]$ that factors into distinct linear polynomials in $F[x]$.
 The \emph{Galois group} of a field extension $K \subset F$ is the group
 \[
  \mathrm{Gal}(F/K) := \bigl\{ \varphi \in \mathrm{Aut}(F) \, \mid \, \varphi|_{K} = \mathrm{id}_{K} \bigr\} \,,
 \]
 namely, the group of automorphisms of~$F$ that leave $K$ fixed.
\end{definition}

In our setting, the Galois group of a flexible pentagonal bipyramid arises as an instance of the following construction.
A dominant rational map $f \colon C \dashrightarrow D$ between real algebraic curves~$C$ and~$D$
determines a field extension $\R(D) \subset \R(C)$ between the function fields of~$D$ and~$C$,
respectively.
Recall that the function field of an algebraic variety $X \subset \R^n$ is the fraction field of the coordinate ring $\R[x_1, \dotsc, x_n]/I(X)$, where $I(X)$ is the ideal of polynomials vanishing on~$X$.
Hence, we can associate to $f$ a Galois group, namely the group $\mathrm{Gal}\bigl(\R(C)/\R(D)\bigr)$.

\subsection{Relevant graphs and subgraphs}
\label{basics:graphs}

We fix the following notation for the combinatorial structure of an $n$-gonal bipyramid,
whose graph we denote by $\Bipyramid=(V_\Bipyramid,E_\Bipyramid)$; see \Cref{figure:double_penta} for the combinatorial structure of~$\Penta$.
The $n$ vertices on the equator are labeled by the numbers from $1$ to $n$;
the remaining two vertices are labeled~$\Top$ and~$\Bottom$ (for ``top'' and ``bottom'').
Throughout the paper, we always suppose $n \geq 4$, since $3$-gonal bipyramids are rigid.

For our discussion, it is useful to consider \emph{almost tetrahedra},
namely subgraphs of~$\Bipyramid$ induced by the vertices $\Top$, $\Bottom$, $i$, $i+1$
(modulo~$n$) for $i \in \{1, \dotsc, n\}$.
Notice that each of these subgraphs is isomorphic to the $1$-skeleton of a ``tetrahedron with a missing edge'',
where the missing edge is $\{\Top, \Bottom\}$.
The almost tetrahedron determined by the vertices~$i$ and~$i+1$ is denoted by~$\Tetrahedron{i}{i+1}$
(see \Cref{figure:double_penta} for illustrations).

\begin{figure}[H]
  \centering
  \begin{tikzpicture}[scale=0.9]
    % poles
    \node[vertex, label={[labelsty,label distance=-2pt]90:$\Top$}] (T) at (0.89, 4.56) {};
    \node[vertex, label={[labelsty,label distance=-2pt]270:$\Bottom$}] (B) at (1.50,0.24) {};
    % equator
    \node[vertex, label={[labelsty,label distance=4pt, anchor=center]240:$1$}] (1) at (0.99,1.91) {};
    \node[vertex, label={[labelsty,label distance=-2pt]180:$2$}] (2) at (0,2.25) {};
    \node[vertex, label={[labelsty,label distance=-2pt]-60:$3$}] (3) at (1.21,3.12) {};
    \node[vertex, label={[labelsty,label distance=-2pt]0:$4$}] (4) at (2.38,2.78) {};
    \node[vertex, label={[labelsty, label distance=4pt, anchor=center]150:$5$}] (5) at (1.87,2.05) {};
    \draw[edge] (T)--(1);
    \draw[edge] (B)--(1);
    \draw[edge] (T)--(2);
    \draw[edge] (B)--(2);
    \draw[edge] (T)--(3);
    \draw[edge] (B)--(3);
    \draw[edge] (T)--(4);
    \draw[edge] (B)--(4);
    \draw[edge] (T)--(5);
    \draw[edge] (B)--(5);
    \draw[edge] (1)--(2);
    \draw[edge] (2)--(3);
    \draw[edge] (3)--(4);
    \draw[edge] (4)--(5);
    \draw[edge] (5)--(1);
  \end{tikzpicture}
  \qquad
    \begin{tikzpicture}[scale=0.9]
    % poles
    \node[vertex, label={[labelsty,label distance=-2pt]90:$\Top$}] (T) at (0.89, 4.56) {};
    \node[vertex, label={[labelsty,label distance=-2pt]270:$\Bottom$}] (B) at (1.50,0.24) {};
    % equator
    \node[vertex, label={[labelsty,label distance=4pt, anchor=center]240:$1$}] (1) at (0.99,1.91) {};
    \node[vertex, label={[labelsty,label distance=-2pt]180:$2$}] (2) at (0,2.25) {};
    \node[vertex, label={[labelsty,label distance=-2pt]-60:$3$}] (3) at (1.21,3.12) {};
    \node[vertex, label={[labelsty,label distance=-2pt]0:$4$}] (4) at (2.38,2.78) {};
    \node[vertex, label={[labelsty, label distance=4pt, anchor=center]150:$5$}] (5) at (1.87,2.05) {};
    \draw[oedge] (T)--(3);
    \draw[oedge] (B)--(3);
    \draw[oedge] (T)--(4);
    \draw[oedge] (B)--(4);
    \draw[oedge] (T)--(5);
    \draw[oedge] (B)--(5);
    \draw[oedge] (2)--(3);
    \draw[oedge] (3)--(4);
    \draw[oedge] (4)--(5);
    \draw[oedge] (5)--(1);
    \draw[hedge] (T)--(1);
    \draw[hedge] (B)--(1);
    \draw[hedge] (T)--(2);
    \draw[hedge] (B)--(2);
    \draw[hedge] (1)--(2);
  \end{tikzpicture}
  \caption{The graph of a pentagonal bipyramid (left) and
  of the almost tetrahedron $\Tetrahedron{1}{2}$ (right).}
  \label{figure:double_penta}
\end{figure}
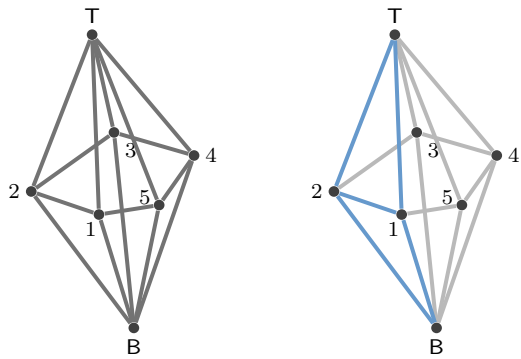

\subsection{Motions}
\label{basics:motions}
In the following we describe the notions of realizations and motions.
\begin{definition}
\label{definition:realization}
A \emph{realization} of~$\Bipyramid$ is a map
\[
 \rho \colon V_{\Bipyramid} \longrightarrow \R^3
\]
such that the images of vertices connected by an edge are distinct.
Two realizations are \emph{congruent} if one can be obtained from the other by applying a direct isometry of~$\R^3$.
\end{definition}

\begin{definition}
\label{definition:flexible}
 A realization~$\rho$ of~$\Bipyramid$ induces \emph{edge lengths} $\lambda_{uv} := \left\| \rho(u) - \rho(v) \right\|$ for each $\{u,v\} \in E_{\Bipyramid}$.
 Once we fix a tuple of edge lengths $\lambda = (\lambda_{e} \, \colon \, e \in E_{\Bipyramid})$,
 we say that the $n$-gonal bipyramid is \emph{flexible}
 if there exist infinitely many non-congruent realizations of~$\Bipyramid$ inducing~$\lambda$.
 This is equivalent to the existence of a \emph{flex}
 for some realization~$\rho$ inducing~$\lambda$, namely, a continuous function
 \[
  f \colon [0,1) \longrightarrow \bigl(\R^3\bigr)^{V_{\Bipyramid}}
 \]
 such that
 \begin{itemize}
  \item $f(0) = \rho$;
  \item $f(t)$ is a realization of~$\Bipyramid$
  inducing the same edge lengths as $\rho$ for any $t \in [0,1)$;
  \item $f(t_1)$ and $f(t_2)$ are not congruent for any $t_1 \neq t_2$.
 \end{itemize}
\end{definition}

From \Cref{definition:flexible} we see that it can be useful to consider the set of realizations of a bipyramid
inducing the same edge lengths, up to the action of the group of direct isometries of~$\R^3$.
We propose to use a notion of \emph{configuration space} derived from the classical construction of \emph{Cayley-Menger varieties}, similarly as Borcea did in \cite{Borcea2002}.

We introduce a variety that keeps track of the distances between the vertices of~$\Bipyramid$ and (in addition to the classical Cayley-Menger variety) also of the oriented volume of the tetrahedra
determined by the vertices $(i, i+1, \Top, \Bottom)$.
Recall that the \emph{oriented volume} of the tetrahedron in $\R^3$
defined by the points $(a_1, a_2, a_3, a_4)$ is
\[
 \frac{1}{6} \det( a_1 - a_4 \quad a_2 - a_4 \quad a_3 - a_4 ) \,.
\]

\begin{definition}
 \label{definition:oriented_cayley_menger}
 Let $\Eq$ be the set of equatorial edges of~$\Bipyramid$,
 where we choose the following orientation
 \[
  \bigl\{ (1, 2), \dotsc, (n-1, n), (n, 1) \bigr\} \,.
 \]
 Consider the map:
 \[
  (\R^3)^{V_{\Bipyramid}} \rightarrow \R^{\binom{V_{\Bipyramid}}{2}} \times \R^{\Eq}
 \]
 sending a realization $\rho$ of~$\Bipyramid$ to the pair
 \[
   \Bigl( \left(d_{vw}\right)_{v, w \in V_{\Bipyramid}}, (s_e)_{e \in \Eq} \Bigr)\,,
 \]
 where $d_{vw} = \left\| \rho(v) - \rho(w) \right\|^2$ and
 for $e = (i, i+1)$, the number $s_e$ is the oriented volume
 of the tetrahedron $(i, i+1, \Top, \Bottom)$.
 Here, whenever $i = n$, we consider $i+1$ to be $1$.
 For convenience, we may write $s_{i,i+1}$ instead of $s_{(i,i+1)}$.
 We define the \emph{oriented Cayley-Menger variety} $\OCM$ to be the Zariski closure of the image of this map.
 By construction, the function field $\R(\OCM)$ is generated by the classes of the functions $\left(d_{vw}\right)_{v, w \in V_{\Bipyramid}}, (s_e)_{e \in \Eq}$.

 Given edge lengths~$\lambda$ for~$\Bipyramid$, we define the \emph{configuration space}
 \[
  K_{\lambda} := \OCM \cap \bigl\{ d_{vw} = \lambda^2_{\{v,w\}} \text{ for all } \{v,w\} \in E_{\Bipyramid} \bigr \} \,.
 \]
\end{definition}

We introduce some quantities that play a crucial role in \Cref{galois}.

\begin{definition}
\label{definition:volumes}
 For a realization $\rho$ of $\Bipyramid$, we denote by $\varsigma_{ij}$ the oriented volume of the realization of a tetrahedron $(i, j, \Top, \Bottom)$.
 In particular, we have $\varsigma_{i, i+1} = s_{i,i+1}$ and $\varsigma_{i,j} = - \varsigma_{j,i}$. \Cref{equations} reports equations relating the quantities $\varsigma_{i,j}$ and the distances~$d_{uv}$.
\end{definition}

\begin{remark}
  Any configuration space of~$\Bipyramid$ is a curve since, if it was at least two-dimensional,
 then we could fix the distance between~$\Top$ and~$\Bottom$ and still have a one-dimensional motion,
 but this is impossible because the structure we obtain is composed of tetrahedra with fixed edge lengths.
\end{remark}
\begin{definition}
\label{definition:motion}
 A \emph{motion} of an $n$-gonal bipyramid is any one-dimensional component
 of the configuration space $K_{\lambda}$ of~$\Bipyramid$ with fixed edge lengths~$\lambda$
 whose general point is given by the distances induced by a realization of~$\Bipyramid$ inducing~$\lambda$.
 We call a motion \emph{non-degenerate} if the vertices of each $3$-cycle in~$\Bipyramid$ are non-collinear
 and all non-edges change their lengths along the motion.
\end{definition}

Notice that it is not possible that the dihedral angle at an edge of the equator stays constant along any motion
(equivalently, $s_{i,i+1}$ is constant)
since this would fix the distance between~$\Top$ and~$\Bottom$, which implies all dihedral angles being constant.
Instead, if the dihedral angle at an edge of the form $\{ \Top, i \}$ or $\{ \Bottom, i \}$,
where $i$ is a vertex of the equator, stays constant along a motion,
then the distance between the vertices $i-1$ and $i+1$ (mod $n$) is constant and
so we fall back to the case of a flexible $\mathbb{B}_{n-1}$ by omitting the vertex~$i$ and adding the edge $\{ i-1, i+1 \}$.
This is why we ask this not to happen for non-degenerate motions.

For any non-degenerate motion~$\motion$ and almost tetrahedron~$\Tetrahedron{i}{i+1}$,
there is a regular algebraic map
\[
 \pi_{i,i+1} \colon \motion \longrightarrow \motion_{i,i+1}\,,
\]
where the curve $\motion_{i,i+1}$ is a subset of
the configuration curves $\Tetrahedron{i}{i+1}$,
namely the general points in $\motion_{i,i+1}$ are the restrictions to~$\Tetrahedron{i}{i+1}$
of distances determined by realizations of~$\Bipyramid$.
The fact that~$\motion_{i, i+1}$ is a curve (and not a point)
is a consequence of the non-degeneracy assumption on~$\motion$.

Notice that, in order to freeze the action of the group of direct isometries, one could simply pin down the realization of a triangle in~$\Bipyramid$:
in this way, any two different realizations with the same pinned triangle would automatically be not congruent under any direct isometry.
However, this approach would lead to some technical subtleties to be handled when, for example,
we have to consider at the same time the configuration spaces of two almost tetrahedra that do not share a triangle.
Because of this, we prefer to use the approach exposed above,
which allows us to easily restrict from the configuration space of the whole~$\Bipyramid$
to the one of an almost tetrahedron.

\subsection{Flexible octahedra and the Nelson construction}
\label{basics:octahedra}

It is well-known from the works of Bricard (see \cite{Bricard1897}),
that there are three kinds of motions for octahedra:
\begin{itemize}
 \item Type I (line-symmetric): here, in each realization, there are three pairs of vertices that are symmetric with respect to a line;
 \item Type II (plane-symmetric): here, in each realization, the set of vertices is symmetric with respect to a plane; moreover, two vertices lie on the symmetry plane;
 \item Type III: here, the edge lengths in each of the three induced $4$-cycles satisfy a linear relation
 	and there are two flat poses.
\end{itemize}

We recall a construction idea from Nelson \cite{Nelson2010},
that allows us to form a flexible pentagonal bipyramid starting from two flexible octahedra.

Let $\Oct_\kasperl$ and $\Oct_\pezi$ be two octahedra, i.e., $4$-gonal bipyramids.
Define $V_{\Oct_\kasperl} := \{1,2,3,0,\Top,\Bottom\}$ and $V_{\Oct_\pezi} := \{3,4,5,0,\Top,\Bottom\}$ so that
the non-edges in~$\Oct_\kasperl$ are $\{1,3\}$, $\{2,0\}$, $\{\Top,\Bottom\}$ and
the non-edges in~$\Oct_\pezi$ are $\{3,5\}$, $\{4,0\}$, $\{\Top,\Bottom\}$.
A pair~$(\rho_\kasperl,\rho_\pezi)$ of realizations of the two octahedra is called a \emph{fitting pair}
if the following hold:
\begin{itemize}
 \item $\rho_\kasperl(i)=\rho_\pezi(i)$ for all $i \in \{0,3,T,B\}$;
 \item the three points $\rho_\kasperl(0),\rho_\kasperl(1),\rho_\pezi(5)$ are collinear.
\end{itemize}
A fitting pair induces a realization $\rho_\Penta$ of the pentagonal bipyramid with vertices
$1$, $2$, $3$, $4$, $5$, $\Top$, $\Bottom$:
for each vertex~$u$, we define~$\rho_\Penta(u)$ to be
$\rho_\kasperl(u)$ or $\rho_\pezi(u)$,
depending which of these two values is defined.
If both values are defined, then they coincide by the definition of fitting pair,
so $\rho_\Penta$ is well-defined.
Note that the distance between~$\rho_\Penta(1)$
and~$\rho_\Penta(5)$ is determined by the induced edge lengths of the octahedra,
up to only two possible values, namely $\lambda^{\kasperl}_{\{1,0\}} + \lambda^{\pezi}_{\{5,0\}}$
and~$|\lambda^{\kasperl}_{\{1,0\}} - \lambda^{\pezi}_{\{5,0\}}|$.

Assume now that a fitting pair $(\rho_\kasperl,\rho_\pezi)$ admits a \emph{flex}, namely a pair $(f_\kasperl, f_\pezi)$ of flexes
\[
 f_\kasperl \colon [0,1) \longrightarrow \bigl(\R^3\bigr)^{V_{\Oct_\kasperl}}
 \quad
 \text{and}
 \quad
 f_\pezi \colon [0,1) \longrightarrow \bigl(\R^3\bigr)^{V_{\Oct_\pezi}}
\]
such that $\bigl( f_\kasperl(t), f_\pezi(t) \bigr)$ is a fitting pair for any $t \in [0,1)$.

When a fitting pair admits a flex,
by simply forgetting vertex~$0$ we obtain a flex of a pentagonal bipyramid.
This is what we call a \emph{Nelson construction} of a flexible pentagonal bipyramid,
see also \Cref{fig:nelson}.

\begin{figure}[ht]
 \begin{tikzpicture}[inode/.style={circle,minimum size=0.4cm},glue/.style={dashed,-{Latex[]}}]
  \node (o1) at (0,0) {\includegraphics[height=4cm]{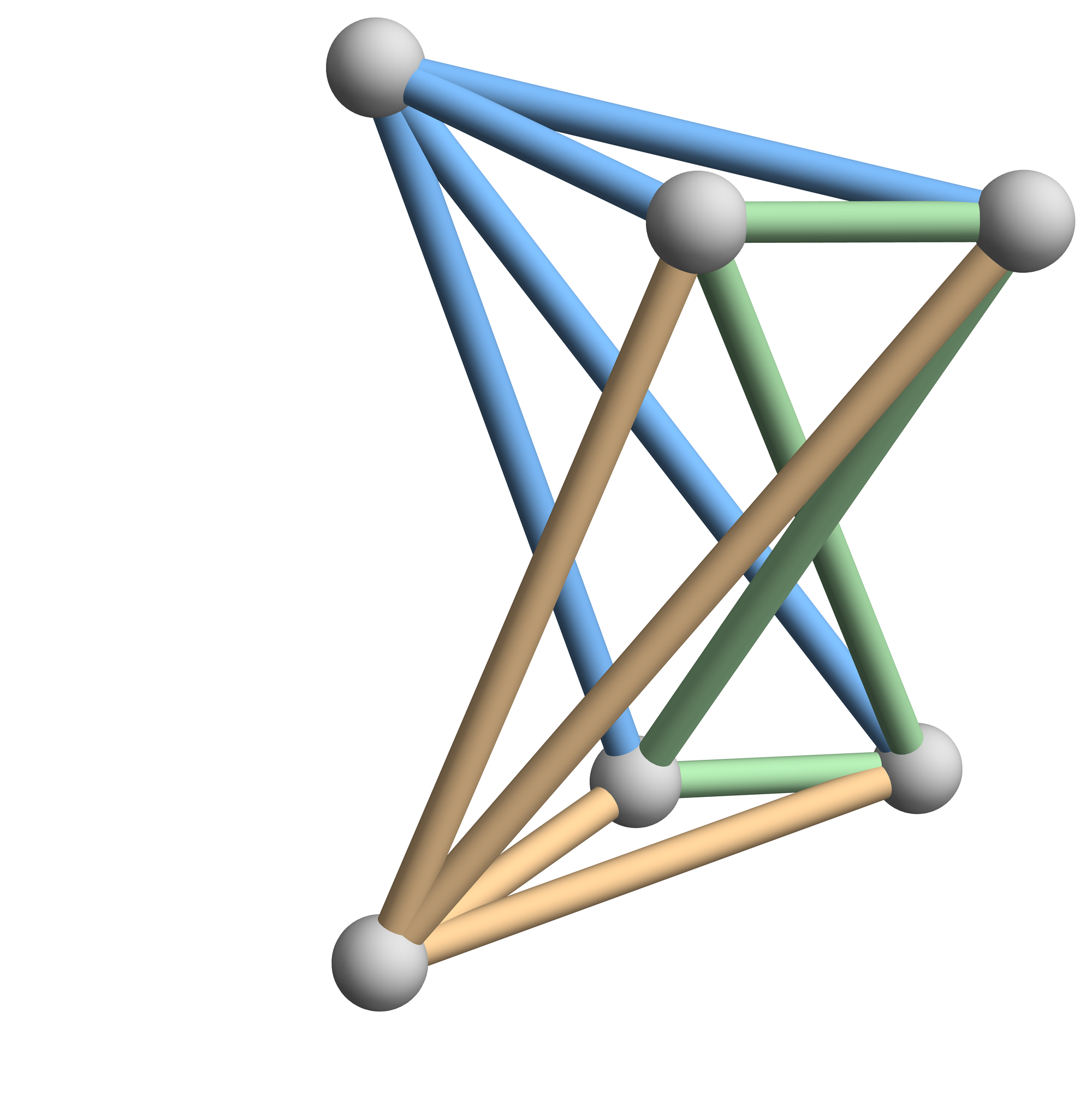}};
  \node (o2) at (5,0) {\includegraphics[height=4cm]{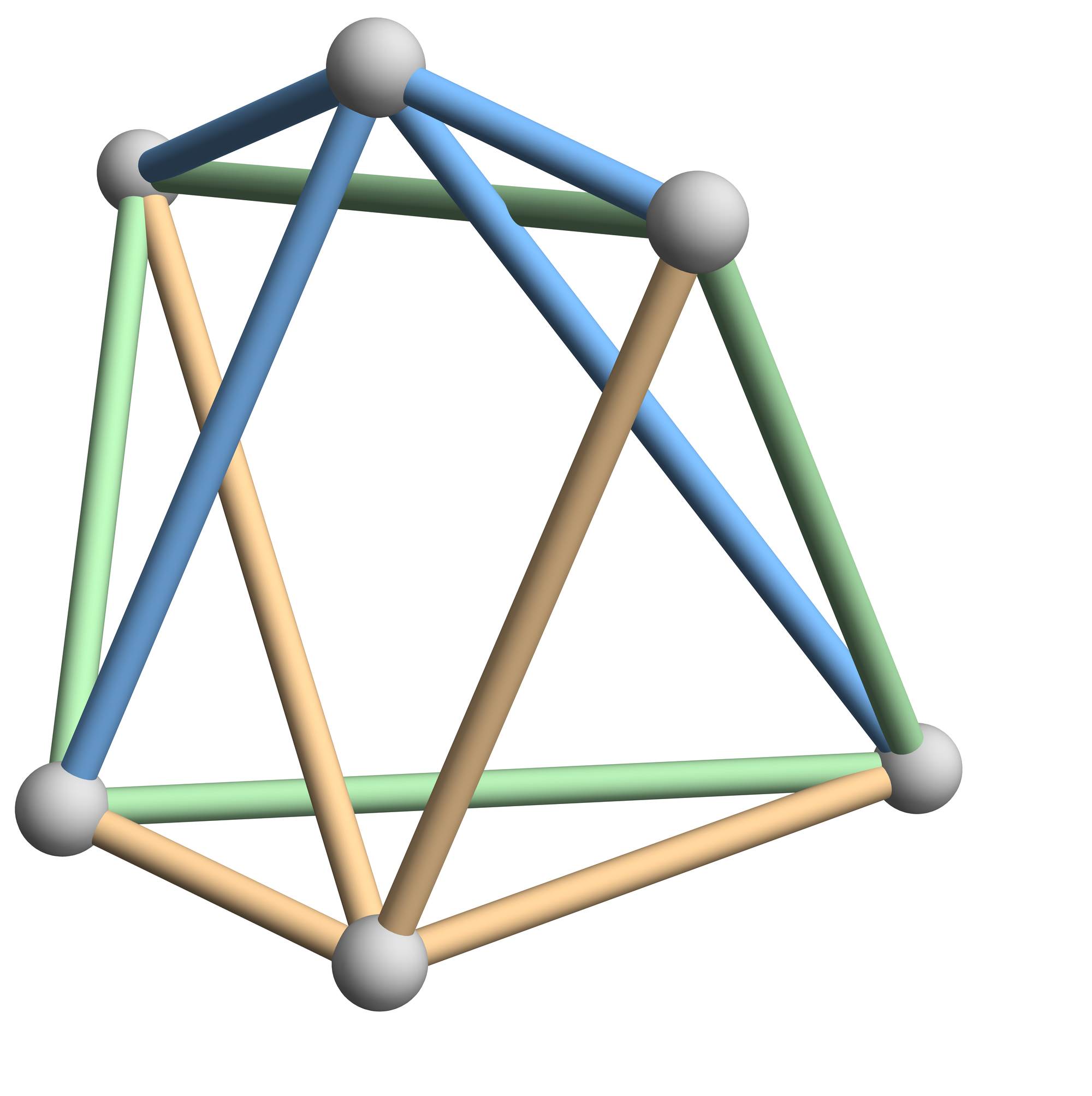}};
  \node[] (a) at (10,0) {\includegraphics[height=4cm]{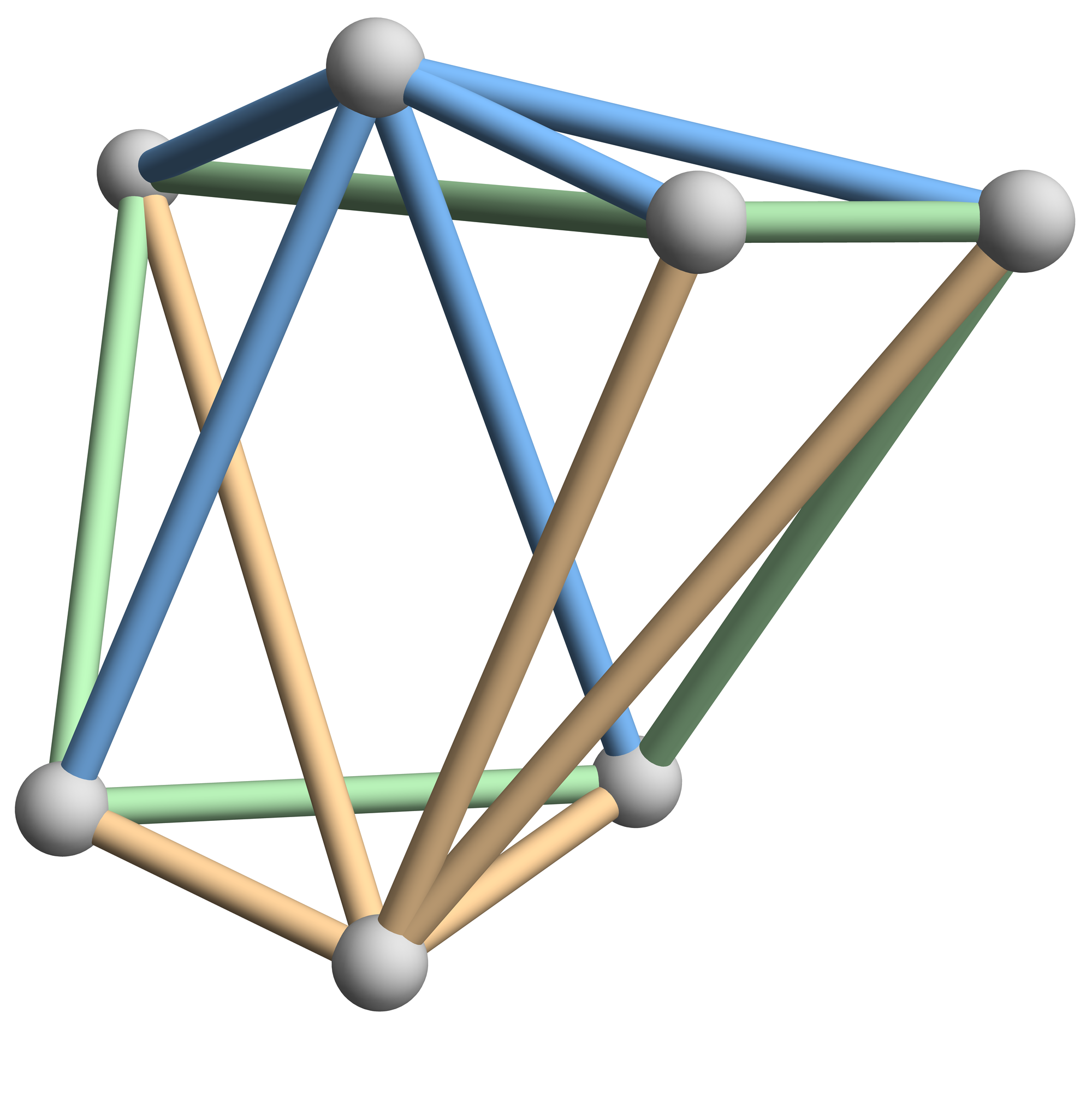}};
  \node[opacity=0.1] (a) at (10,0) {\includegraphics[height=4cm]{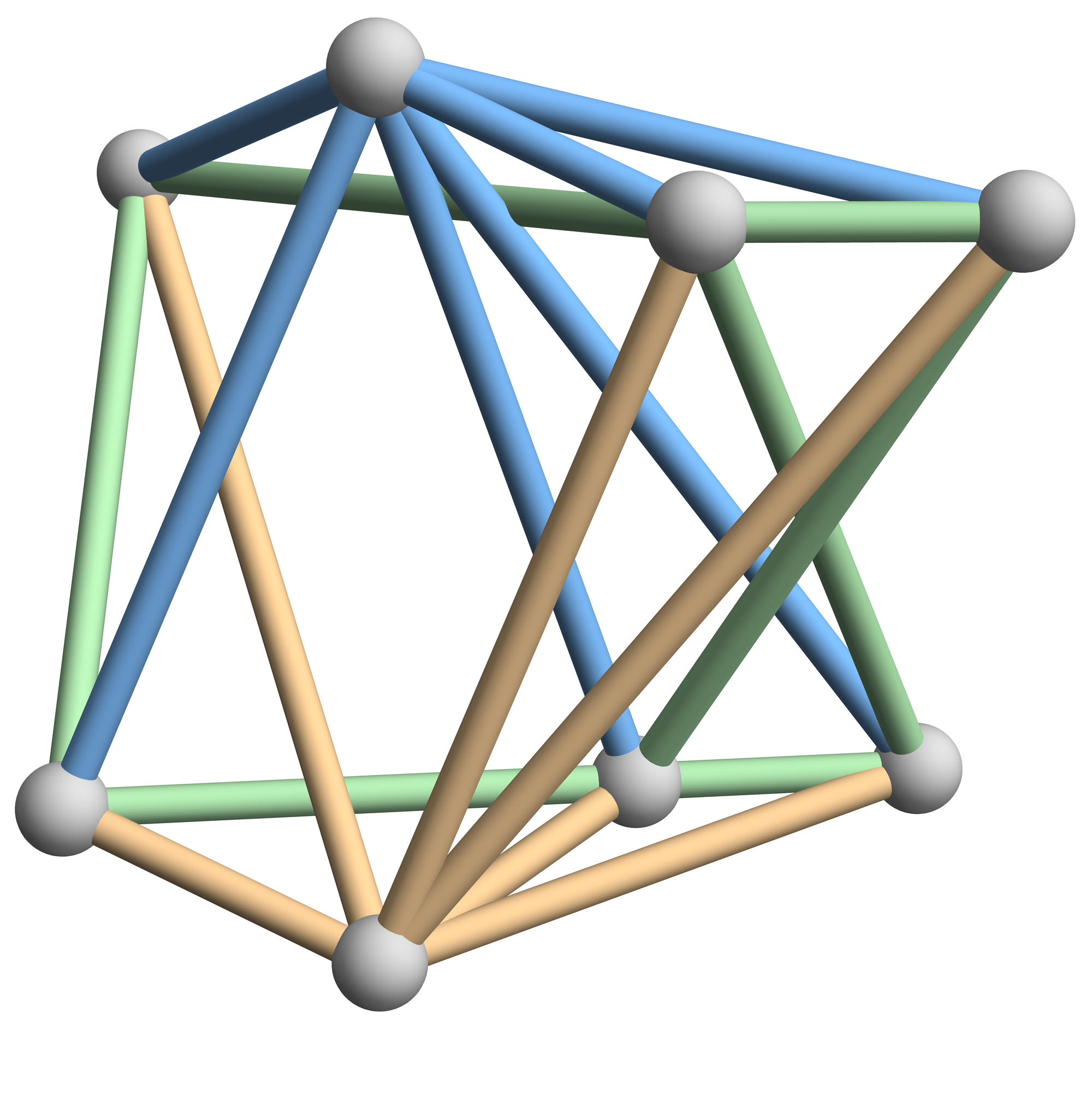}};
  \node[inode] (na1) at (-0.61,-1.525) {};
  \node[inode] (na2) at ($(na1)+(o2)$) {};
  \node[inode] (nb1) at (-0.62,1.765) {};
  \node[inode] (nb2) at ($(nb1)+(o2)$) {};
  \node[inode] (nc1) at (0.55,1.2) {};
  \node[inode] (nc2) at ($(nc1)+(o2)$) {};
  \node[inode] (nd1) at (1.37,-0.8) {};
  \node[inode] (nd2) at ($(nd1)+(o2)$) {};
  \draw[glue] (na1) to[bend right=10] (na2);
  \draw[glue] (nb1) to[bend left=10] (nb2);
  \draw[glue] (nc1) to[bend left=50] (nc2);
  \draw[glue] (nd1) to[bend right=50] (nd2);
  \draw[line width=1pt,double,-{Implies[]}] ($(a.west)-(1,0)$)--(a.west);
 \end{tikzpicture}
 \caption{The idea of the Nelson construction.}
 \label{fig:nelson}
\end{figure}

\section{Galois theory for motions}
\label{galois}

In this section, we define the notion of \emph{Galois group} of a motion of a pentagonal bipyramid and we describe which groups arise as such.
This notion, however, can be defined for arbitrary $n$-gonal bipyramids.
Since the first results that we obtain are true for arbitrary bipyramids, we start with the more general setting and then we specialize to the case of pentagonal bipyramids.
We point out that some of the results in this section (\Cref{lemma:oriented_volume_zero_sum} and \Cref{corollary:flat_poses_two_ones}) resamble the ones by Connelly in \cite[Lemma~3 and~4]{Connelly1974} (see also \cite[Section~2]{Alexandrov2011}): however, the two techniques are different and a translation between them is not apparent, although we believe it would be worth of investigation.

We fix a non-degenerate motion~$\motion$ of an $n$-gonal bipyramid and we consider the rational map
\[
 \distance \colon \motion \dashrightarrow \R
\]
associating to a point of~$\motion$ its $\Top\Bottom$-coordinate, i.e., the squared distance between the realizations of~$\Top$ and~$\Bottom$.
This rational map determines a field extension $\R(d_{\Top\Bottom}) \subset \R(\motion)$,
where $\R(d_{\Top\Bottom})$ and $\R(\motion)$ are the function fields of the affine line and~$\motion$, respectively.

\begin{definition}
 We define $\Gal$ to be the Galois group of the extension $\R(d_{\Top\Bottom}) \subset \R(\motion)$.
\end{definition}

We first show that $\R(d_{\Top\Bottom}) \subset \R(\motion)$ is Galois (\Cref{lemma:galois}) and
that $\Gal$ can be considered as a subgroup of~$\Z_2^{\Eq}$ (\Cref{lemma:embedding}).
The latter embedding uses in a key way the restriction of the motion~$\motion$
to each of the $n$ almost tetrahedra constituting the $n$-gonal bipyramid.
We then establish several relations between the existence of certain elements in the Galois group
and the oriented volumes of tetrahedra with vertices in~$\Bipyramid$,
together with their consequences on flexible bipyramids.
After that, we specialize to pentagonal bipyramids and we explore how kinematics and geometry constrain the possible elements in the Galois group,
until we prove that there are only two possibilities (\Cref{proposition:galois_groups}).
We conclude by showing that each of these groups can be realized by a Nelson construction (\Cref{theorem:nelson_Z2}, \Cref{theorem:nelson_opposite}).

Since $\motion$ is a subvariety of $\OCM$, all the coordinate functions $d_{vw}$ and $s_{i,i+1}$ are elements of $\R(\motion)$.
Keep in mind that, for every edge $e = \{v, w\} \in E_{\Bipyramid}$, we have the equation $d_e = \lambda_e^2$ in $\motion$, hence the coordinate function $d_e$ is invariant under any element of~$\Gal$.
We start by clarifying how the Galois group $\Gal$ acts on the $s$-variables.

\begin{proposition}
\label{proposition:galois_on_volume}
 Let $\varphi \in \Gal$. Then for every $i \in \{1, \dotsc, n\}$, we have
 \[
  \varphi(s_{i,i+1}) = \pm s_{i,i+1} \,.
 \]
\end{proposition}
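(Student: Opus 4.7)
The plan is to reduce the claim to the fact that $s_{i,i+1}^2$ already lives in the base field $\R(d_{\Top\Bottom})$; once this is established, $\varphi$ automatically fixes $s_{i,i+1}^2$, so $s_{i,i+1}$ must be a root of the quadratic $X^2 - s_{i,i+1}^2 \in \R(d_{\Top\Bottom})[X]$. As $\varphi$ is a field automorphism, it can only send a root to a root of this quadratic, which leaves only the two possibilities $\varphi(s_{i,i+1}) = \pm s_{i,i+1}$.

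To verify $s_{i,i+1}^2 \in \R(d_{\Top\Bottom})$, I would examine the almost tetrahedron $\Tetrahedron{i}{i+1}$ on the vertices $i, i+1, \Top, \Bottom$. All of its five edges belong to $E_{\Bipyramid}$; the only pair among these four vertices not forming an edge of~$\Bipyramid$ is $\{\Top,\Bottom\}$. Consequently, by the defining equations of the configuration space $K_{\lambda}$, on $\motion$ the squared distances $d_{i,i+1}$, $d_{i,\Top}$, $d_{i,\Bottom}$, $d_{i+1,\Top}$, $d_{i+1,\Bottom}$ are all equal to the corresponding constants $\lambda^2_e \in \R$, while $d_{\Top\Bottom}$ is non-constant (since $\motion$ is non-degenerate).

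Now $s_{i,i+1}$ is, by \Cref{definition:oriented_cayley_menger}, the oriented volume of the realization of the tetrahedron on $(i,i+1,\Top,\Bottom)$. The Cayley–Menger relation expresses $288$ times the squared volume of a tetrahedron in $\R^3$ as a polynomial in the six pairwise squared distances of its vertices; the relevant identities are also collected in \Cref{equations}. Applying this relation, $s_{i,i+1}^2$ is a polynomial in the constants $\lambda^2_e$ and the single variable $d_{\Top\Bottom}$, hence $s_{i,i+1}^2 \in \R[d_{\Top\Bottom}] \subset \R(d_{\Top\Bottom})$. Since $\varphi \in \Gal$ fixes $\R(d_{\Top\Bottom})$ pointwise, we obtain $\varphi(s_{i,i+1})^2 = \varphi(s_{i,i+1}^2) = s_{i,i+1}^2$ in the integral domain $\R(\motion)$, which forces $\varphi(s_{i,i+1}) = \pm s_{i,i+1}$.

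I do not foresee any real obstacle; the whole argument is just the observation that each almost tetrahedron has a single varying squared distance, namely $d_{\Top\Bottom}$, together with the standard determinantal formula for the volume of a tetrahedron in terms of its squared edge lengths.
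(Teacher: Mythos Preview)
Your proposal is correct and matches the paper's own proof almost verbatim: the paper invokes \Cref{lemma:equation_oriented_volume} to write $s_{i,i+1}^2 = F$ with $F$ a polynomial in the fixed edge distances and $d_{\Top\Bottom}$, observes that $\varphi$ fixes~$F$, and concludes $\varphi(s_{i,i+1})^2 = F$, hence $\varphi(s_{i,i+1}) = \pm s_{i,i+1}$.
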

\begin{proof}
 By \Cref{lemma:equation_oriented_volume}, $s_{i,i+1}$ satisfies an equation of the form $s_{i,i+1}^2 - F = 0$ where $F$ is a polynomial in the variables $(d_{vw})_{\{v,w\} \in E_{\Bipyramid}}$ and $d_{\Top\Bottom}$.
 Therefore, the element $F$ is left invariant by~$\varphi$, hence
 \[
  \varphi(s_{i,i+1})^2 = F \,,
 \]
 which gives the statement.
\end{proof}

As we mentioned, the study of the restriction of the motion~$\motion$ to the various almost tetrahedra
is crucial in our understanding of the Galois group.
Recall that the curve~$\motion_{i,i+1}$ is the image of~$\motion$ under the restriction map~$\pi_{i,i+1}$
between the configuration space of $\Bipyramid$ and the configuration space of the almost tetrahedron~$\Tetrahedron{i}{i+1}$.
Then, the map~$\distance$ fits into a commutative diagram
\begin{center}
 \begin{tikzpicture}
  \node[] (m) at (-1.6,0) {$\motion$};
  \node[] (mp) at (0,1.25) {$\motion_{i,i+1}$};
  \node[] (c) at (1.6,0) {$\R$};
  \draw[cdiagd] (m)to node[below,labelsty] {$\distance$} (c);
  \draw[cdiagd] (mp)to node[anchor=south west,labelsty] {$(\distance)_{i,i+1}$} (c);
  \draw[cdiag] (m)to node[anchor=south east,labelsty] {$\pi_{i,i+1}$} (mp);
 \end{tikzpicture}
\end{center}
where $(\distance)_{i,i+1}$ is the distance (from~$\Top$ to~$\Bottom$) map applied to realizations in~$\motion_{i,i+1}$.
Notice that $\motion_{i,i+1}$ is irreducible: its elements are configurations of the almost tetrahedron~$\Tetrahedron{i}{i+1}$,
which are parametrized by the angle between the triangles $\{i, i+1, \Top\}$ and $\{i, i+1, \Bottom\}$.
The maps~$\pi_{i,i+1}$ are dominant: in fact, $\pi_{i,i+1}$ is non-constant since if the distance between~$\Top$ and~$\Bottom$
changes, then the angle in the almost tetrahedron changes as well;
in the same way, we show that the maps $(\distance)_{i,i+1}$ are dominant as well.
Hence, we can interpret the function fields $\R(\motion_{i,i+1})$
as intermediate field extensions between~$\R(d_{\Top\Bottom})$ and~$\R(\motion)$.
Moreover, we call the Galois group of $\R(d_{\Top\Bottom}) \subseteq \R(\motion_{i,i+1})$ the Galois group of the almost tetrahedron~$\Tetrahedron{i}{i+1}$.

\begin{lemma}
\label{lemma:quadratic_extension}
 Each extension $\R(d_{\Top\Bottom}) \subseteq \R(\motion_{i,i+1})$ is of degree~$2$.
 The Galois group of~$\Tetrahedron{i}{i+1}$ is~$\Z_2$.
\end{lemma}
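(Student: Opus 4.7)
The plan is to show that $\R(\motion_{i,i+1})$ is generated over $\R(d_{\Top\Bottom})$ by the single element $s_{i,i+1}$, and that this generator satisfies an irreducible polynomial of degree~$2$. First, I would identify the function field explicitly: on $\motion_{i,i+1}$ all distances $d_{vw}$ for $\{v,w\} \in E_{\Tetrahedron{i}{i+1}}$ reduce to the corresponding fixed squared edge lengths, while among the remaining coordinates of $\OCM$ only $d_{\Top\Bottom}$ and $s_{i,i+1}$ are non-constant on $\motion_{i,i+1}$. Hence $\R(\motion_{i,i+1}) = \R(d_{\Top\Bottom},\, s_{i,i+1})$.

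Next I would invoke \Cref{lemma:equation_oriented_volume} (the same relation already used in the proof of \Cref{proposition:galois_on_volume}) to write $s_{i,i+1}^2 = F$ where $F$ is a polynomial in $d_{\Top\Bottom}$ and the fixed edge lengths; in particular $F \in \R(d_{\Top\Bottom})$, so the extension $\R(d_{\Top\Bottom}) \subseteq \R(\motion_{i,i+1})$ has degree at most $2$. To rule out degree $1$, I would argue geometrically. After fixing the realizations of vertices $i$, $i+1$, $\Top$ up to direct isometry, a realization of $\Tetrahedron{i}{i+1}$ places $\Bottom$ on the circle around the line through $i$ and $i+1$ determined by the fixed distances $\lambda_{i,\Bottom}$ and $\lambda_{i+1,\Bottom}$. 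Reflecting $\Bottom$ across the plane spanned by $\{i, i+1, \Top\}$ yields another valid realization with the same value of $d_{\Top\Bottom}$ but with $s_{i,i+1}$ negated. The non-degeneracy of $\motion$ (non-collinearity of the triangles $\{i,i+1,\Top\}$ and $\{i,i+1,\Bottom\}$) ensures that these two realizations are generically distinct, so the map $d_{\Top\Bottom}\colon \motion_{i,i+1} \to \R$ has generic fibre of size~$2$. Therefore $[\R(\motion_{i,i+1}) : \R(d_{\Top\Bottom})] = 2$.

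Finally, the polynomial $X^2 - F \in \R(d_{\Top\Bottom})[X]$ factors in $\R(\motion_{i,i+1})[X]$ as $(X-s_{i,i+1})(X+s_{i,i+1})$, with distinct roots since $s_{i,i+1}$ is not identically zero (again by non-degeneracy). By the definition given in the paper, the extension is therefore Galois, and its Galois group, being of order~$2$, is $\Z_2$, generated by the involution that fixes $d_{\Top\Bottom}$ and sends $s_{i,i+1}$ to $-s_{i,i+1}$.

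The step I expect to be the most delicate is the clean translation of the geometric reflection symmetry into the algebraic statement that $s_{i,i+1} \notin \R(d_{\Top\Bottom})$: the oriented Cayley-Menger construction is precisely what is needed to distinguish the two reflected configurations as distinct points of $\motion_{i,i+1}$, since without the orientation datum $s_{i,i+1}$ the two sheets would collapse to one. Everything else is essentially a formality once this separation is established.
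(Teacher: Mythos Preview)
Your proposal is correct and follows essentially the same approach as the paper: both establish $[\R(\motion_{i,i+1}):\R(d_{\Top\Bottom})]\le 2$ from the relation $s_{i,i+1}^2=F\in\R(d_{\Top\Bottom})$ coming from \Cref{lemma:equation_oriented_volume}, and then rule out degree~$1$ by arguing that the map $(\distance)_{i,i+1}$ is generically $2:1$. The only difference is cosmetic: the paper phrases the fibre count abstractly (``flexibility of $\Tetrahedron{i}{i+1}$ gives two non-congruent realizations for each $d_{\Top\Bottom}$ in an interval''), whereas you make this explicit via the reflection of $\Bottom$ in the plane of $\{i,i+1,\Top\}$, which is exactly the indirect isometry witnessing that the two preimages are distinct points of $\motion_{i,i+1}$.
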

\begin{proof}
 All the $d$-coordinate functions in $\R(\motion_{i, i+1})$ are left invariant by the Galois group.
 Due to \Cref{proposition:galois_on_volume}, the only $s$-coordinate function, namely $s_{i,i+1}$, satisfies a degree~$2$ polynomial equation over $\R(d_{\Top\Bottom})$.
 If $s_{i,i+1}$ satisfied a degree~$1$ equation over $\R(d_{\Top\Bottom})$, then the complexification of the map $d_{\Top\Bottom} \colon \motion_{i,i+1} \rightarrow \R$ would be of degree~$1$, i.e., a general element of the codomain would have a preimage of cardinality~$1$.
 This, however, would contradict the fact that, $\Tetrahedron{i}{i+1}$ being flexible,
 there is a whole interval of values for the distance between~$\Top$ and~$\Bottom$
 such that there exist two realizations with the same edge lengths and the given distance $d_{\Top\Bottom}$ that are not related by a direct isometry.
 Hence, $\R(d_{\Top\Bottom}) \subseteq \R(\motion_{i,i+1})$ has degree~$2$, so the Galois group of~$\Tetrahedron{i}{i+1}$ is~$\Z_2$.
\end{proof}

\begin{lemma}
\label{lemma:generation_subfields}
 The field $\R(\motion)$ is generated by the subfields $\{ \R(\motion_{i,i+1}) \}_{i=1}^{n}$.
\end{lemma}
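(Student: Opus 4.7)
The plan is to show containment in both directions. One direction is trivial since each $\R(\motion_{i,i+1})\subseteq\R(\motion)$, so the compositum $F$ of the subfields $\R(\motion_{i,i+1})$ is automatically contained in $\R(\motion)$. For the reverse direction, since $\motion$ is a subvariety of $\OCM$ whose function field is generated over $\R$ by the coordinates $d_{vw}$ and $s_{i,i+1}$, it suffices to show that the restriction to $\motion$ of every such coordinate lies in $F$. The easy cases can be dispatched immediately: each $s_{i,i+1}$ is in $\R(\motion_{i,i+1})$ by construction; each $d_{vw}$ with $\{v,w\}$ an edge of $\Bipyramid$ is a constant in $\R(\motion)$ equal to $\lambda_{\{v,w\}}^2$; and $d_{\Top\Bottom}$ lies in every $\R(\motion_{i,i+1})$. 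What remains is to handle the ``equatorial diagonals'' $d_{i,j}$, with $i$ and $j$ non-adjacent on the equator.

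I would prove this by induction on the cyclic distance between $i$ and $j$, simultaneously tracking the oriented volumes $\varsigma_{i,j}$ from \Cref{definition:volumes}. The base case consists of adjacent pairs: $d_{i,i+1}$ is an edge length and $\varsigma_{i,i+1}=s_{i,i+1}\in\R(\motion_{i,i+1})$. The inductive step requires an algebraic identity expressing $d_{i,k}$ and $\varsigma_{i,k}$ as rational functions of $d_{i,j},d_{j,k},\varsigma_{i,j},\varsigma_{j,k}$ together with quantities already in $F$ (namely edge lengths and $d_{\Top\Bottom}$). Such an identity amounts to the trigonometric addition formula applied to the angular coordinates $\theta_\ell$ that the equatorial vertices subtend around the $\Top\Bottom$-axis: placing $\Top$ at the origin and $\Bottom$ on the $z$-axis, each equatorial vertex $\ell$ has height $z_\ell$ and radius $r_\ell$ depending only on edge lengths and $d_{\Top\Bottom}$, and a direct calculation gives $d_{\ell,m}$ as an affine function of $\cos(\theta_\ell-\theta_m)$ and $\varsigma_{\ell,m}$ as a scalar multiple of $\sin(\theta_\ell-\theta_m)$. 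The relation $\theta_i-\theta_k=(\theta_i-\theta_j)+(\theta_j-\theta_k)$ combined with the sine and cosine addition formulas then yields the required rational expressions.

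The main obstacle is promoting this kinematic calculation to an honest polynomial identity on $\motion$, rather than only on one specific realization in a chosen coordinate system. One clean route is to derive the identity directly from the determinantal definition of $\varsigma_{i,j}$: the choice of coordinates is merely a computational device, and because both sides of the desired relation are algebraic functions of the coordinate functions of $\OCM$, equality at every realization forces equality as rational functions on $\motion$. Alternatively, one can invoke the vanishing of the $6\times 6$ Cayley--Menger determinant of the affinely dependent five-tuple $\Top,\Bottom,i,j,k$ and use $\varsigma_{i,j}$, $\varsigma_{j,k}$ to resolve the sign ambiguity when solving for $d_{i,k}$. Once such an identity is in place---and I expect it to be among those gathered in \Cref{equations}---the induction is immediate and the lemma follows.
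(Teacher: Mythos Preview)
Your proposal is correct and follows essentially the same route as the paper: reduce to showing that every coordinate function of $\motion$ lies in the compositum, handle the trivial cases, and then obtain the equatorial diagonals $d_{ij}$ (together with the $\varsigma_{i,j}$) by induction on the gap $|j-i|$ using algebraic identities from distance geometry. The paper delegates the inductive step to \Cref{corollary:rational_functions}, which is proved exactly along your Cayley--Menger alternative (\Cref{lemma:equation_distance_volume} and \Cref{lemma:oriented_volume_diagonal}); your trigonometric addition-formula picture is simply the geometric interpretation of those same identities.
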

\begin{proof}
 To show that $\R(\motion)$ is generated by $\{ \R(\motion_{i,i+1}) \}_{i=1}^{n}$,
 it is enough to prove that all the coordinate functions of~$\motion$ can be written as rational functions
 in the coordinate functions of each~$\motion_{i,i+1}$.
 Recalling \Cref{definition:oriented_cayley_menger},
 we then have to express all distances~$d_{vw}$, where $v,w \in V_{\Bipyramid}$,
 as rational functions in the distances~$d_{v'w'}$,
 where $v',w' \in V_{\Tetrahedron{i}{i+1}}$, and the coordinate functions $s_{i,i+1}$
 for some $i \in \{1, \dotsc, n\}$ and the distance~$\distance$. This is a special case of \Cref{corollary:rational_functions}.
\end{proof}

\begin{lemma}
\label{lemma:galois}
 The extension $\R(d_{\Top\Bottom}) \subset \R(\motion)$ is Galois.
\end{lemma}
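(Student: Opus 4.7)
The plan is to exhibit an explicit polynomial $g \in \R(d_{\Top\Bottom})[x]$ whose splitting field is $\R(\motion)$ and which, after passing to its squarefree part, factors into distinct linear polynomials in $\R(\motion)[x]$. This matches the concrete definition of a Galois extension given at the start of the excerpt.

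By the equation in \Cref{lemma:equation_oriented_volume} invoked in the proof of \Cref{proposition:galois_on_volume}, each $s_{i,i+1}$ satisfies $s_{i,i+1}^2 - F = 0$ where $F$ is a polynomial in the variables $(d_{vw})_{\{v,w\} \in E_{\Bipyramid}}$ and $d_{\Top\Bottom}$. On $\motion$ all edge distances are constants ($d_{vw} = \lambda_{vw}^2$), so this specializes to $s_{i,i+1}^2 - F_i = 0$ with $F_i \in \R[d_{\Top\Bottom}]$. Define
\[
 g(x) := \prod_{i=1}^n \bigl(x^2 - F_i\bigr) \in \R(d_{\Top\Bottom})[x].
\]
The roots of $g$ in $\R(\motion)$ are precisely the elements $\pm s_{i,i+1}$ for $i=1,\dotsc,n$. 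By \Cref{lemma:quadratic_extension} each intermediate field $\R(\motion_{i,i+1})$ equals $\R(d_{\Top\Bottom})(s_{i,i+1})$, and by \Cref{lemma:generation_subfields} the compositum of these is all of $\R(\motion)$. Hence the roots of $g$ generate $\R(\motion)$ over $\R(d_{\Top\Bottom})$, i.e., $\R(\motion)$ is the splitting field of $g$.

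To obtain the distinctness requirement, I would pass to the squarefree part $\tilde{g} := g/\gcd(g,g')$, which lies in $\R(d_{\Top\Bottom})[x]$ because we work in characteristic zero. It has the same roots in $\R(\motion)$ as $g$ but each with multiplicity one, so it factors into distinct linear factors in $\R(\motion)[x]$ while still having $\R(\motion)$ as its splitting field. This verifies the definition and gives the claim. I do not expect a real obstacle: the statement is essentially a packaging of \Cref{proposition:galois_on_volume}, \Cref{lemma:quadratic_extension}, and \Cref{lemma:generation_subfields}; the only mildly delicate point is remembering to take the squarefree part so that the distinctness condition in the paper's definition of a Galois extension is literally satisfied, regardless of whether some of the $F_i$ happen to coincide or vanish.
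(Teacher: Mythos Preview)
Your argument is essentially identical to the paper's: it too writes $\R(\motion)$ as the splitting field of $\prod_{i=1}^{n}(x^2-\alpha_i)$ using \Cref{lemma:quadratic_extension} and \Cref{lemma:generation_subfields}. Your extra step of passing to the squarefree part to guarantee distinct linear factors is a nice touch that the paper leaves implicit.
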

\begin{proof}
 Each extension $\R(d_{\Top\Bottom}) \subset \R(\motion_{i,i+1})$ is quadratic by \Cref{lemma:quadratic_extension},
 so $\R(\motion_{i,i+1})$ is a splitting field of a polynomial $x^2 - \alpha_{i, i+1}$ with $\alpha_i \in \R(d_{\Top\Bottom})$: its roots are the elements $s_{i,i+1}$ and $-s_{i,i+1}$.
 Hence, by \Cref{lemma:generation_subfields} the field~$\R(\motion)$ is a splitting field of the polynomial $\prod_{i=1}^{n} (x^2 - \alpha_i)$.
 Therefore, the extension $\R(d_{\Top\Bottom}) \subset \R(\motion)$ is Galois.
\end{proof}

\begin{lemma}
\label{lemma:embedding}
 The Galois group $\Gal$ is a subgroup of~$\Z_2^{\Eq}$,
 where $\Eq$ is the set of equatorial edges of $\Bipyramid$.
\end{lemma}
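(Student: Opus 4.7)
The plan is to build an explicit injective group homomorphism $\Phi\colon \Gal \to \Z_2^{\Eq}$ that records, for each equatorial edge, the sign with which an automorphism acts on the corresponding oriented-volume variable.

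Identify $\Z_2 \cong \{\pm 1\}$ multiplicatively. By \Cref{proposition:galois_on_volume}, for every $\varphi \in \Gal$ and every equatorial edge $e = (i,i+1) \in \Eq$ there is a unique $\epsilon_e(\varphi) \in \{\pm 1\}$ such that $\varphi(s_e) = \epsilon_e(\varphi)\, s_e$. Set
\[
 \Phi(\varphi) := \bigl(\epsilon_e(\varphi)\bigr)_{e \in \Eq}\,.
\]
To check that $\Phi$ is a group homomorphism, compute, for any $\varphi, \psi \in \Gal$ and $e \in \Eq$,
\[
 (\varphi \circ \psi)(s_e) = \varphi\bigl(\epsilon_e(\psi)\, s_e\bigr) = \epsilon_e(\psi)\, \epsilon_e(\varphi)\, s_e\,,
\]
since $\epsilon_e(\psi) \in \R \subset \R(d_{\Top\Bottom})$ is fixed by $\varphi$. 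Hence $\epsilon_e(\varphi \circ \psi) = \epsilon_e(\varphi)\,\epsilon_e(\psi)$, matching the componentwise product in $\Z_2^{\Eq}$.

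For injectivity, assume $\Phi(\varphi)$ is the trivial tuple, i.e.\ $\varphi(s_e) = s_e$ for all $e \in \Eq$. The key observation is that $\R(\motion_{i,i+1}) = \R(d_{\Top\Bottom})(s_{i,i+1})$: among the distance generators of the function field of~$\motion_{i,i+1}$ coming from the four vertices of $\Tetrahedron{i}{i+1}$, those corresponding to edges of the almost tetrahedron are constant edge lengths, while the only non-edge distance is $d_{\Top\Bottom}$, which already lies in the base field; the sole $s$-variable is $s_{i,i+1}$. Since $\varphi$ fixes $\R(d_{\Top\Bottom})$ by definition of~$\Gal$ and fixes each $s_{i,i+1}$ by assumption, it restricts to the identity on every $\R(\motion_{i,i+1})$. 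By \Cref{lemma:generation_subfields}, these subfields generate $\R(\motion)$, so $\varphi = \mathrm{id}$. Thus $\Phi$ is injective and $\Gal$ embeds into $\Z_2^{\Eq}$.

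The only substantive step is the identification $\R(\motion_{i,i+1}) = \R(d_{\Top\Bottom})(s_{i,i+1})$, which follows directly from unpacking the (oriented) Cayley--Menger construction restricted to the almost tetrahedron; everything else is a routine verification that $\Phi$ is well-defined, multiplicative, and has trivial kernel, so no real obstacle is expected.
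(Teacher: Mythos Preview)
Your proof is correct and is essentially the same as the paper's, just phrased more concretely: the paper passes through the Galois correspondence, defining $H_{i,i+1}$ as the subgroup fixing $\R(\motion_{i,i+1})$ and mapping $\Gal \hookrightarrow \prod_i \Gal/H_{i,i+1}$, whereas you write down the sign map $\varphi \mapsto (\epsilon_e(\varphi))_e$ directly via \Cref{proposition:galois_on_volume}. These are the same homomorphism (indeed the paper itself identifies them in \Cref{definition:embedding}), and both arguments reduce injectivity to \Cref{lemma:generation_subfields}; your explicit description $\R(\motion_{i,i+1}) = \R(d_{\Top\Bottom})(s_{i,i+1})$ simply unpacks what the paper leaves packaged in the Galois correspondence.
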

\begin{proof}
 Let $H_{i,i+1}$ be the subgroup of~$\Gal$ determined by~$\R(\motion_{i,i+1})$ via the Galois correspondence.
 Since $\R(d_{\Top\Bottom}) \subset \R(\motion_{i,i+1})$ is quadratic,
 the subgroup $H_{i,i+1}$ is normal in~$\Gal$ and $\Gal/H_{i,i+1} \cong \Z_2$.
 Since $\R(\motion)$ is generated by $\{ \R(\motion_{i,i+1}) \}_{i=1}^{n}$,
 the intersection of all $\{ H_{i,i+1} \}_{i=1}^{n}$ is the trivial subgroup.
 This implies that the map
 \[
  \begin{array}{rcl}
   \Gal & \longrightarrow & \Gal/H_{1,2} \times \dotsb \times \Gal/H_{n,1}\,, \\
   \varphi & \longmapsto & ([\varphi]_{1,2} , \dotsc, [\varphi]_{n,1})
  \end{array}
 \]
 is an injective homomorphism.
\end{proof}

\begin{definition}
\label{definition:embedding}
 From now on, we identify elements of~$\Gal$ with their images in $\Z_2^{\Eq}$.
 As clarified by the proof of \Cref{lemma:embedding},
 each factor of $\Z_2^{\Eq}$ corresponds to an almost tetrahedron.
 Hence, we represent an element~$\varphi$ in $\Gal$ by a binary tuple
 such that the entry $(i,i+1)$ is $1$
 if and only if $\varphi(s_{i,i+1}) = -s_{i,i+1}$.
 We denote by $\ones{\varphi}$ the set of equatorial edges $e$
 such that the entry $e$ in the tuple of $\varphi$ is 1.
\end{definition}

\begin{lemma}
\label{lemma:element11}
 For every two equatorial edges $e_1, e_2 \in \Eq$, there exists $\varphi \in \Gal$ such that $e_1, e_2\in \ones{\varphi}$.
\end{lemma}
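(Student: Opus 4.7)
The plan is to reduce the statement to a small observation about subgroups of $\Z_2 \times \Z_2$. First I would argue that for every equatorial edge $e \in \Eq$, the projection $\pi_e \colon \Gal \to \Z_2$ onto the $e$-th factor of the embedding from \Cref{lemma:embedding} is surjective. Indeed, by \Cref{lemma:quadratic_extension} we have $[\R(\motion_{i,i+1}) : \R(d_{\Top\Bottom})] = 2$, so $s_e \notin \R(d_{\Top\Bottom})$. Since the extension $\R(d_{\Top\Bottom}) \subset \R(\motion)$ is Galois (\Cref{lemma:galois}), the fixed field of $\Gal$ is $\R(d_{\Top\Bottom})$, hence there exists $\varphi \in \Gal$ with $\varphi(s_e) \neq s_e$. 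By \Cref{proposition:galois_on_volume} this forces $\varphi(s_e) = -s_e$, i.e.\ the $e$-th entry of $\varphi$ equals $1$.

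Now, given two equatorial edges $e_1, e_2 \in \Eq$ (if $e_1 = e_2$ the statement is already covered by the previous paragraph), consider the joint projection $\pi_{e_1,e_2} \colon \Gal \to \Z_2 \times \Z_2$ onto the $e_1$- and $e_2$-coordinates. By the first step, $\pi_{e_1,e_2}(\Gal)$ surjects onto each of the two $\Z_2$ factors. The subgroups of $\Z_2 \times \Z_2$ with this property are only two: the full group $\Z_2 \times \Z_2$ and the diagonal $\{(0,0),(1,1)\}$. In either case the element $(1,1)$ belongs to $\pi_{e_1,e_2}(\Gal)$, which means there is some $\varphi \in \Gal$ with $\varphi(s_{e_1}) = -s_{e_1}$ and $\varphi(s_{e_2}) = -s_{e_2}$, that is, $e_1, e_2 \in \ones{\varphi}$.

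There is no real obstacle here; the argument is entirely formal, relying on the Galois correspondence together with the tiny group-theoretic enumeration of subgroups of $\Z_2 \times \Z_2$. The only point that deserves care is the appeal to \Cref{lemma:galois} to ensure that $\R(d_{\Top\Bottom})$ is actually the fixed field, so that non-triviality of $s_e$ modulo $\R(d_{\Top\Bottom})$ translates into the existence of an element of~$\Gal$ that moves it.
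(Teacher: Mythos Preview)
Your proof is correct and follows essentially the same approach as the paper's: first establish surjectivity of each coordinate projection $\Gal \to \Z_2$ (the paper invokes the Galois correspondence to identify the image with the Galois group of~$\mathbb{T}_e$, while you equivalently use the fixed-field characterization together with \Cref{proposition:galois_on_volume}), and then observe that a subgroup of $\Z_2 \times \Z_2$ surjecting onto both factors must contain $(1,1)$, which is exactly the paper's ``one of $\varphi$, $\psi$, or $\varphi+\psi$ works'' argument phrased group-theoretically.
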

\begin{proof}
 For each equatorial edge~$e$, the projection $\Gal \longrightarrow \Z_2$ onto the factor corresponding to~$e$ is surjective.
 Indeed, by the fundamental theorem of Galois theory, the image of that projection is the Galois group of $\mathbb{T}_{e}$,
 which is $\Z_2$ by \Cref{lemma:quadratic_extension}.
 Let $\varphi, \psi \in \Gal$ be such that $e_1,e_2\in\ones{\varphi}$. Then, at least one of $\varphi$, $\psi$, or $\varphi + \psi$ is a witness for the statement.
\end{proof}

The next series of results focuses on the relation between
the Galois group and the oriented volumes of tetrahedra in~$\Bipyramid$.

\begin{lemma}
\label{lemma:consecutive_ones}
 If $\varphi \in \Gal$ is such that $\ones{\varphi}$ are consecutive equatorial edges,
 namely, $\varphi$ is of the form
 \[
  (0, \dotsc, 0, \underbrace{\overset{(i, i+1)}{1}, \dotsc, \overset{(j-1, j)}{1}}_{(j-i) \text{ ones}}, 0, \dotsc, 0) \,,
 \]
 then $\varsigma_{i,j} = 0$. Recall from \Cref{definition:volumes} and \Cref{corollary:rational_functions} that $\varsigma_{i,j}$ is the rational function in $\R(\motion)$ representing the volume of the tetrahedron $(i,j,\Top, \Bottom)$. In particular, the vertices $i, j, \Top, \Bottom$ are coplanar during the motion.
\end{lemma}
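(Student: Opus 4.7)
The plan is to work at a generic realization in $\motion$, compare it with its Galois image under $\varphi$ in the same fibre of $\distance$, and use the closure of the equator to force $\phi_j-\phi_i$ (the angular separation of $i$ and $j$ around the axis $\Top\Bottom$) to be a multiple of $\pi$; this makes the tetrahedron $(i,j,\Top,\Bottom)$ degenerate.

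To set up coordinates, I would fix a direct isometry placing $\Top$ and $\Bottom$ on the $z$-axis. Each equatorial vertex $k$ is then constrained to a specific circle around the $z$-axis, whose radius and height are determined by $\lambda_{\Top k}^2$, $\lambda_{\Bottom k}^2$ and $d_{\Top\Bottom}$, so its position is described by a single angular coordinate $\phi_k$. Writing $\theta_k=\phi_{k+1}-\phi_k$ with indices mod $n$, the edge-length constraint at $\{k,k+1\}$ fixes $\cos\theta_k$ as a rational function of $d_{\Top\Bottom}$, while $\sin\theta_k$ has the same sign as $s_{k,k+1}$. Because the equator is a cycle, the angles satisfy the closure relation $\sum_{k=1}^{n}\theta_k\equiv 0\pmod{2\pi}$, and the oriented volume of the tetrahedron $(i,j,\Top,\Bottom)$ is a strictly positive multiple of $\sin(\phi_j-\phi_i)$.

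Next I would exploit the action of $\varphi$ on the fibre of $\distance$ over a generic value of $d_{\Top\Bottom}$: it sends a realization $\rho$ to another realization $\rho'$ with the same squared distances and with $s_{k,k+1}(\rho')=-s_{k,k+1}(\rho)$ exactly for $k\in\ones{\varphi}=\{i,\dots,j-1\}$. Since $\cos\theta_k$ depends only on the edge lengths and $d_{\Top\Bottom}$, the angles in $\rho'$ satisfy $\theta_k'=-\theta_k$ for $k\in\{i,\dots,j-1\}$ and $\theta_k'=\theta_k$ otherwise (modulo $2\pi$). Writing the closure condition for both $\rho$ and $\rho'$ and subtracting gives
\[
 2\sum_{k=i}^{j-1}\theta_k\;\equiv\;0\pmod{2\pi},
\]
so that $\phi_j-\phi_i\equiv 0\pmod{\pi}$ and consequently $\sin(\phi_j-\phi_i)=0$. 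Therefore $\varsigma_{i,j}(\rho)=0$ for generic $\rho\in\motion$, whence $\varsigma_{i,j}=0$ as an element of $\R(\motion)$; the coplanarity statement then follows immediately since the vanishing of the oriented volume of $(i,j,\Top,\Bottom)$ is exactly the coplanarity of these four points.

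The main obstacle I anticipate is the rigorous translation between the Galois theoretic picture and the geometric one: I would need to verify that the action of $\varphi$ on the fibres of $\distance$ really is by the advertised sign flips of the $s_{k,k+1}$ (this follows from the Galois correspondence together with the interpretation of the generators of $\R(\motion)$ provided by \Cref{lemma:embedding} and \Cref{definition:embedding}), and that the closure of the equator is encoded by a polynomial identity in $\R(\motion)$ relating the $s_{k,k+1}$ and the $d_{vw}$ (which can be derived from the equations collected in \Cref{equations}). Once these two pieces are in place, the subtraction of closure relations above becomes an algebraic identity and the conclusion of the lemma drops out as described.
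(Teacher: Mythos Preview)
Your argument is correct and takes a genuinely different route from the paper. The paper proceeds entirely inside the function field~$\R(\motion)$: using the Cayley--Menger-type identities of \Cref{lemma:equation_distance_volume} and \Cref{lemma:oriented_volume_diagonal}, it shows by induction along the equator that when the entries $(i,i+1),\dots,(j-1,j)$ of $\varphi$ are all ones one has $\varphi(\varsigma_{i,j})=-\varsigma_{i,j}$, while traversing the complementary arc of zeros gives $\varphi(\varsigma_{i,j})=+\varsigma_{i,j}$; equating the two forces $\varsigma_{i,j}=0$. No realizations are ever chosen and no angular coordinates appear.

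Your approach instead lifts the Galois automorphism to the deck transformation $\rho\mapsto\rho'$ of $\motion$ over $\R$ and works with the cylindrical angles $\phi_k$ around the $\Top\Bottom$-axis, using the closure relation $\sum_k\theta_k\equiv 0\pmod{2\pi}$ for both $\rho$ and $\rho'$. This is essentially Connelly's classical angular argument for suspensions, recast in the Galois language; the paper in fact remarks before \Cref{lemma:oriented_volume_zero_sum} that the link to Connelly's technique ``is not apparent'' and would be ``worth investigation'', so your proof supplies exactly that bridge. The trade-off is that you must justify that $\varphi$ really acts on generic real points as advertised (which you correctly attribute to the Galois correspondence and \Cref{lemma:galois}) and that the positive factor $\tfrac{a r_i r_j}{3}$ in $\varsigma_{i,j}=\tfrac{a r_i r_j}{3}\sin(\phi_j-\phi_i)$ is nonzero generically. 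These are routine, and once handled your subtraction of closure relations in $\R/2\pi\Z$ cleanly yields $\phi_j-\phi_i\in\pi\Z$, hence $\varsigma_{i,j}=0$ on a Zariski-dense set and therefore identically. One minor correction: you do not actually need (and should not claim) that $\rho'$ has ``the same squared distances'' as $\rho$ for \emph{all} pairs---only the edge lengths and $d_{\Top\Bottom}$ are a priori preserved, and that is all your argument uses.
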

\begin{proof}
  Let $\psi \in \Gal$.
  We first prove that if $(i, i+1)$ and $(i+1, i+2)$ are in $\ones{\psi}$, then $\psi(\varsigma_{i, i+2}) = -\varsigma_{i, i+2}$.
  In fact, from \Cref{lemma:equation_distance_volume} it follows that $\psi(d_{13}) = d_{13}$.
  Now, using \Cref{lemma:oriented_volume_diagonal} with $k = i+1$ and $j=i+2$ the statement follows,
  since $\psi(a_{ijk}) = -a_{ijk}$ while $\psi(b_{ijk}) = b_{ijk}$,
  because $\psi$ leaves the squared volumes $W$ invariant thanks to the fact that $\psi(d_{13}) = d_{13}$.
  By the very same argument (and using induction as in \Cref{corollary:rational_functions}), when all the entries from $(i, i+1)$ until $(j-1, j)$ are ones, we have $\psi(\varsigma_{i, j}) = -\varsigma_{i, j}$.
  Similarly, if all entries from $(i, i+1)$ until $(j-1, j)$ of $\psi$ are zero,
  one has $\psi(\varsigma_{i, j}) = \varsigma_{i, j}$,
  since the only difference from the previous case is that $\psi(a_{ijk}) = a_{ijk}$.
  Hence, for the element $\varphi$ from the statement, we have $\varphi(\varsigma_{i,j}) = -\varsigma_{i,j}$ considering the ones from entry $(i,i+1)$ to entry $(j-1,j)$, and at the same time we have $\varphi(\varsigma_{i,j}) = \varsigma_{i,j}$ considering the zeros from entry $(j,j+1)$ to entry $(i-1,i)$.
  Altogether, we obtain $\varsigma_{i,j} = 0$.
  Since $\varsigma_{i,j}$ is the a volume of the tetrahedron $(i, j, \Top, \Bottom)$, this means that such volume is zero for a general element of the motion, hence the four vertices $i, j, \Top, \Bottom$ stay coplanar during the motion.
\end{proof}

Since we suppose that the motion~$\motion$ is non-degenerate, each tetrahedron $(i, i+1, \Top, \Bottom)$ changes its volume during the motion. In view of \Cref{lemma:consecutive_ones}, we obtain:

\begin{corollary}
\label{corollary:no_one}
 No element in~$\Gal$ has a single one or a single zero.
\end{corollary}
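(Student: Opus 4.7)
My plan is to derive a contradiction from \Cref{lemma:consecutive_ones} in each of the two cases. Suppose first that $\varphi \in \Gal$ has its unique one at position $(i, i+1)$. I would apply \Cref{lemma:consecutive_ones} with $j = i+1$, so that the chain of ones degenerates to the singleton $\{(i, i+1)\}$ and the chain of zeros is its complement; the conclusion is $\varsigma_{i, i+1} = 0$ in $\R(\motion)$. Since $\varsigma_{i, i+1} = s_{i, i+1}$ by \Cref{definition:volumes}, the oriented volume of the tetrahedron $(i, i+1, \Top, \Bottom)$ would vanish identically along~$\motion$. By the remark immediately following \Cref{definition:motion}, a constant (in particular, zero) $s_{i, i+1}$ forces $d_{\Top\Bottom}$ to be constant, which contradicts non-degeneracy since $\{\Top, \Bottom\}$ is a non-edge and must therefore change length along~$\motion$.

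For the single-zero case, suppose instead that the unique zero of $\varphi$ lies at $(i, i+1)$. Then the remaining $n-1$ entries, all equal to one, form (cyclically) a single consecutive chain from $(i+1, i+2)$ through $(i-1, i)$. I would apply \Cref{lemma:consecutive_ones} to this chain, reading the indices modulo~$n$, to obtain $\varsigma_{i+1, i} = 0$; since $\varsigma_{i+1, i} = -\varsigma_{i, i+1} = -s_{i, i+1}$, this again forces $s_{i, i+1}$ to vanish identically on~$\motion$, yielding the same contradiction as before.

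The one point I would want to double-check is that \Cref{lemma:consecutive_ones} really applies in these extremal configurations, where one of the two complementary cyclic blocks has length one. Inspecting its proof, however, the two symmetric computations of $\varphi(\varsigma_{i,j})$ — one derived from the block of ones, the other from the block of zeros — stay valid for any chain of positive length, so this causes no difficulty. Once this is in hand, the real content of the corollary reduces to the observation that the non-degeneracy of~$\motion$ forces every equatorial oriented volume $s_{i,i+1}$ to be non-constant (and in particular nonzero) as an element of~$\R(\motion)$.
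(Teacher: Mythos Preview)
Your argument is correct and matches the paper's approach exactly: apply \Cref{lemma:consecutive_ones} in each case to force some $s_{i,i+1}$ to vanish identically, then contradict non-degeneracy. The paper compresses this into a single sentence (the one preceding the corollary), but the logical content is identical; your explicit treatment of the two cases and your check that \Cref{lemma:consecutive_ones} remains valid when one of the two cyclic blocks has length one are simply spelling out what the paper leaves implicit.
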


\begin{lemma}
\label{lemma:oriented_volume_zero_sum}
	If $\varphi\in\Gal$, then
	\[
		\sum_{(i,i+1)\in\ones{\varphi}} s_{i,i+1} = 0\,.
	\]
\end{lemma}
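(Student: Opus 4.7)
The plan is to reduce the identity to the single statement that $V := \sum_{i=1}^{n} s_{i,i+1}$ lies in the fixed field $\R(d_{\Top\Bottom})$, i.e., that $V$ is Galois-invariant. Granted this, the lemma follows quickly: for any $\varphi \in \Gal$, using \Cref{proposition:galois_on_volume} and \Cref{definition:embedding},
\[
 V = \varphi(V) = \sum_i \varphi(s_{i,i+1}) = V - 2\sum_{(i,i+1)\in\ones{\varphi}} s_{i,i+1},
\]
giving $\sum_{(i,i+1)\in\ones{\varphi}} s_{i,i+1} = 0$.

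To identify $V$ geometrically, I would carry out a short scalar-triple-product calculation at a generic realization~$\rho$. Writing $u_i := \rho(i) - \rho(\Bottom)$, one has $s_{i,i+1} = \tfrac{1}{6}\,u_{\Top}\cdot (u_i \times u_{i+1})$; summing cyclically and exploiting the translation invariance of the vector area $\vec A := \tfrac{1}{2}\sum_i u_i \times u_{i+1}$ of the equatorial polygon yields
\[
 V = \tfrac{1}{3}\bigl(\rho(\Top)-\rho(\Bottom)\bigr)\cdot \vec A,
\]
which, up to a positive constant factor, is the signed oriented volume of the bipyramid $\Bipyramid$ regarded as a triangulated polyhedral surface.

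To establish $V \in \R(d_{\Top\Bottom})$, I would invoke the Bellows theorem --- Sabitov's result in general, already due to Connelly \cite{Connelly1978} in the special case of suspensions --- which asserts that the signed oriented volume of a simplicial polyhedron is constant along any flex with fixed edge lengths. Hence $V$ is a constant element of $\R \subseteq \R(d_{\Top\Bottom})$. As an aside, combining this with a trace calculation (by \Cref{lemma:quadratic_extension} each $s_{i,i+1}$ has minimal polynomial $x^2-F_{i,i+1}$ of vanishing linear term, so $\mathrm{Tr}_{\R(\motion)/\R(d_{\Top\Bottom})}(V) = 0$; combined with $V$ lying in the base field, which forces $\mathrm{Tr}(V) = |\Gal|\cdot V$, one obtains $V = 0$) recovers Connelly's stronger statement that flexing suspensions have vanishing oriented volume.

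The main obstacle is the constancy of $V$, equivalently its Galois invariance. The cleanest route is via the above theorem; a self-contained derivation within the Cayley-Menger-type framework of \Cref{equations} would have to express $V$ explicitly in terms of the edge lengths and $d_{\Top\Bottom}$, but naive telescopings of the identity $\varsigma_{a,c} = \varsigma_{a,b}+\varsigma_{b,c}-\tfrac{1}{3}(\rho(\Top)-\rho(\Bottom))\cdot\vec A_{a,b,c}$ around the equatorial cycle merely collapse to the tautology $0 = 0$, so some external geometric input seems necessary.
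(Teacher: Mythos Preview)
Your proposal is correct and follows essentially the same approach as the paper: both arguments identify $V=\sum_i s_{i,i+1}$ with the generalized (oriented) volume of the bipyramid, invoke Connelly's result for suspensions (or the Bellows Theorem) to conclude that $V$ is constant along the motion, and then compare $V$ with $\varphi(V)$ to obtain the vanishing sum over $\ones{\varphi}$. The paper actually uses the slightly stronger input $V=0$ from \cite{Connelly1978} directly, while you use only constancy and then recover $V=0$ as a pleasant aside via the trace argument; the paper explicitly remarks that constancy alone would suffice, so the two routes are equivalent.
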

\begin{proof}
	We use the fact that the generalized volume of a flexible $n$-gonal bipyramid
	equals zero~\cite{Connelly1978}, but the Bellows Theorem~\cite{bellows}
	stating that it is a constant would suffice as well.

	For a fixed orientation of the faces of the polyhedron
	(i.e., a cyclic ordering of the vertices of each face
	such that each edge is ordered in the opposite direction in the two faces it belongs to),
	the generalized volume of the polyhedron with realization $\rho$ is defined by
	\[
		\frac{1}{6} \sum \det \bigl( \rho(i), \rho(j), \rho(k) \bigr)\,,
	\]
	where the sum is over all oriented faces $(i,j,k)$.

	We fix an orientation of the faces of~$\Bipyramid$ as follows:
	$(i, i+1, \Top)$ and $(i+1, i, \Bottom)$ for $i \in \{1, \dotsc, n\}$.
	Since the generalized volume is invariant under direct isometries,
	we can suppose that $\Bottom$ is at the origin.
	Hence, the faces containing $\Bottom$ do not contribute to the generalized volume
	and the faces containing~$\Top$ supply the oriented volumes of the tetrahedra $(i, i+1, \Top, \Bottom)$.
	So the generalized volume of an $n$-gonal bipyramid is
	\[
		W = \sum_{i\in\{1,\ldots,n\}} s_{i,i+1} = 0\,.
	\]
	Taking the difference $W - \varphi(W)$ yields the statement.
\end{proof}

\begin{corollary}
\label{corollary:flat_poses_two_ones}
	If $\varphi\in\Gal$ has exactly two ones, then the two almost tetrahedra corresponding to edges in $\ones{\varphi}$
	flatten simultaneously at two poses.
\end{corollary}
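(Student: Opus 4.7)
Write $\ones{\varphi}=\{e_1,e_2\}$ with $e_1=(i,i+1)$ and $e_2=(j,j+1)$. The plan is to combine the zero-sum identity from \Cref{lemma:oriented_volume_zero_sum} with the quadratic polynomial relations of \Cref{lemma:equation_oriented_volume}.

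First, I would apply \Cref{lemma:oriented_volume_zero_sum} directly to $\varphi$. Since $\varphi$ has exactly two ones, the lemma yields the identity $s_{e_1}+s_{e_2}=0$ in $\R(\motion)$. This already proves the simultaneity claim: along the motion, the oriented volume $s_{e_1}$ vanishes precisely when $s_{e_2}$ vanishes, and flatness of $\Tetrahedron{i}{i+1}$ is equivalent to the vanishing of $s_{i,i+1}$ (see \Cref{definition:volumes}). So the two almost tetrahedra flatten at the very same points of~$\motion$.

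Next, to establish the count of poses, I would invoke \Cref{lemma:equation_oriented_volume}: each $s_{e_k}^2$ equals a polynomial $F_{e_k}\in\R[d_{\Top\Bottom}]$ whose non-$d_{\Top\Bottom}$ parameters are the (fixed) edge lengths of the almost tetrahedron $\Tetrahedron{i}{i+1}$ or $\Tetrahedron{j}{j+1}$. By the Cayley–Menger structure recalled in \Cref{equations}, $F_{e_k}$ is of degree exactly~$2$ in $d_{\Top\Bottom}$ (the leading coefficient being determined by the fixed edge lengths and being nonzero thanks to the non-degeneracy of~$\motion$, which forces $s_{e_k}$ to be non-constant). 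Squaring the identity $s_{e_1}+s_{e_2}=0$ gives $F_{e_1}=F_{e_2}$ as elements of $\R(d_{\Top\Bottom})$; hence the two quadratics share their (at most) two roots. These common roots are precisely the values of $d_{\Top\Bottom}$ at which both almost tetrahedra become flat, delivering the two flat poses claimed.

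The delicate point will be pinning down the sense of ``two poses''. A value of $d_{\Top\Bottom}$ can pull back to several points of~$\motion$ under $\distance$, but since the sheets of $\motion_{i,i+1}\to\R$ and $\motion_{j,j+1}\to\R$ coincide exactly over the zeros of $F_{e_1}=F_{e_2}$ (these being the ramification loci of the two quadratic covers from \Cref{lemma:quadratic_extension}), the natural reading is that there are two values of $d_{\Top\Bottom}$ (counted with multiplicity over $\C$, or generically in the real flexion) at which the simultaneous flattening occurs. I would state this explicitly in the proof, since otherwise one might wonder whether the roots could coincide or be non-real; the non-degeneracy of~$\motion$ ensures $F_{e_1}$ is not identically zero and is of degree~$2$, which is all that is needed for the statement.
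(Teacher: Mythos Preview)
Your argument is correct and matches the paper's approach: the paper states this corollary without proof, treating it as immediate from \Cref{lemma:oriented_volume_zero_sum} (the identity $s_{e_1}+s_{e_2}=0$ gives simultaneity, and the count ``two'' comes from the two flat poses of any almost tetrahedron, equivalently the two roots of the quadratic from \Cref{lemma:equation_oriented_volume}). Your third paragraph is over-cautious: the roots of $F_{e_k}$ are the squared extremal $\Top\Bottom$-distances of the almost tetrahedron, hence automatically real and distinct, and your justification for the leading coefficient being nonzero (non-constancy of $s_{e_k}$) only rules out degree~$0$, not degree~$1$---the actual reason is that this coefficient is a nonzero multiple of $\lambda_{i,i+1}^2$, which is positive since adjacent vertices have distinct realizations.
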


\begin{corollary}
\label{corollary:edge_lengths_two_ones}
	If $\varphi\in\Gal$ has exactly two ones,
	then the two edges in $\ones{\varphi}$ have the same length.
\end{corollary}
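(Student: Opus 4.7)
The plan is to extract a polynomial identity in the single variable $d_{\Top\Bottom}$ from the Galois-theoretic information about $\varphi$, and then read off the edge lengths from the leading coefficient. Write $\ones{\varphi}=\{e_1,e_2\}$. First, I would apply \Cref{lemma:oriented_volume_zero_sum} to $\varphi$: since the two-element sum there vanishes, it gives $s_{e_1}+s_{e_2}=0$ as an identity in $\R(\motion)$, and squaring yields $s_{e_1}^2 = s_{e_2}^2$ there.

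Next, I would invoke \Cref{lemma:equation_oriented_volume} (the lemma already cited in the proof of \Cref{proposition:galois_on_volume}) to rewrite each $s_{e_k}^2$ as a polynomial $F_{e_k}$ in the squared edge-length variables $d_{uv}$ for $\{u,v\}\in E_{\Bipyramid}$ and in $d_{\Top\Bottom}$. Since the edge lengths are frozen along the motion, I regard $F_{e_k}$ as a polynomial in the single variable $d_{\Top\Bottom}$; the previous step then reads $F_{e_1}(d_{\Top\Bottom}) = F_{e_2}(d_{\Top\Bottom})$ in $\R(\motion)$. Because the motion is non-degenerate, $d_{\Top\Bottom}$ is non-constant on $\motion$ and hence transcendental over $\R$, so $F_{e_1} = F_{e_2}$ as univariate polynomials.

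The final step is a calculation of leading coefficients. Up to the factor $288$, the polynomial $F_e$ is the $5\times 5$ Cayley--Menger determinant of the four points $\{\Top,\Bottom,i,i+1\}$, where $e=(i,i+1)$. The variable $d_{\Top\Bottom}$ appears only in the two symmetric off-diagonal entries corresponding to the pair $\{\Top,\Bottom\}$, so a short Leibniz expansion shows that the coefficient of $d_{\Top\Bottom}^2$ in the determinant is a nonzero scalar multiple of $d_e = \lambda_e^2$. Equating this leading coefficient for $e_1$ and $e_2$ therefore yields $\lambda_{e_1}^2 = \lambda_{e_2}^2$, and positivity of edge lengths forces $\lambda_{e_1} = \lambda_{e_2}$.

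The main obstacle is the explicit bookkeeping for the Cayley--Menger leading coefficient; it is routine but is where the edge-length information enters. It is worth noting that \Cref{corollary:flat_poses_two_ones} is a geometric shadow of the same identity: the two common flat poses are exactly the roots of both quadratics $F_{e_1}$ and $F_{e_2}$, but passing from ``common roots'' to ``common edge lengths'' still requires the stronger identity $s_{e_1}^2 = s_{e_2}^2$ that \Cref{lemma:oriented_volume_zero_sum} supplies directly.
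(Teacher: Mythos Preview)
Your proposal is correct and follows essentially the same route as the paper's proof: both use \Cref{lemma:oriented_volume_zero_sum} to obtain $s_{e_1}^2=s_{e_2}^2$, then invoke \Cref{lemma:equation_oriented_volume} to turn this into an equality of Cayley--Menger determinants viewed as polynomials in $d_{\Top\Bottom}$, and finally compare leading coefficients. You supply more detail than the paper (the transcendence argument for why the polynomial identity holds coefficientwise, and the explicit identification of the $d_{\Top\Bottom}^2$-coefficient as $-2\lambda_e^2$), but the underlying argument is identical.
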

\begin{proof}
	Let $\ones{\varphi} = \{(i,i+1), (j,j+1)\}$.
	By \Cref{lemma:oriented_volume_zero_sum}, we know that $s_{i,i+1} = -s_{j,j+1}$, hence $s_{i,i+1}^2 = s_{j,j+1}^2$.
	By \Cref{lemma:equation_oriented_volume}, the determinants of the matrices $M_{i,i+1,\Top,\Bottom}$ and $M_{j,j+1,\Top,\Bottom}$ (introduced in the lemma) are equal. These determinants are polynomials in $d_{\Top\Bottom}$, and comparing their leading coefficients yields the result.
\end{proof}

\begin{corollary}
	\label{corollary:no_single_one_simultaneously}
	Let $\varphi,\psi\in\Gal$.
	Then
	\[
		\sum_{(i,i+1)\in \ones{\varphi} \cap \ones{\psi}} s_{i,i+1} = 0\,.
	\]
	In particular, $|\ones{\varphi}\cap \ones{\psi}|\neq 1$.
\end{corollary}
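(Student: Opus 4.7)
The plan is to reduce the statement to three applications of \Cref{lemma:oriented_volume_zero_sum}, using that $\Gal$ is viewed as a subgroup of~$\Z_2^{\Eq}$ and is in particular closed under addition. Given $\varphi,\psi \in \Gal$, the element $\varphi + \psi$ is again in $\Gal$, and by the characterization of the embedding in \Cref{definition:embedding} one immediately checks that $\ones{\varphi+\psi} = \ones{\varphi} \triangle \ones{\psi}$ (the symmetric difference). So I can apply \Cref{lemma:oriented_volume_zero_sum} to each of $\varphi$, $\psi$, and $\varphi + \psi$ separately.

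Next, I would use the elementary identity that, for any two finite sets $A,B$ of equatorial edges,
\[
\sum_{e \in A} s_e + \sum_{e \in B} s_e \;=\; \sum_{e \in A \triangle B} s_e + 2 \sum_{e \in A \cap B} s_e \,,
\]
which follows from the disjoint decomposition $A \cup B = (A \triangle B) \sqcup (A \cap B)$. Applying this with $A = \ones{\varphi}$ and $B = \ones{\psi}$ and invoking \Cref{lemma:oriented_volume_zero_sum} three times (for $\varphi$, $\psi$, $\varphi+\psi$), the left-hand side and the first term on the right both vanish, giving the desired identity $\sum_{(i,i+1) \in \ones{\varphi} \cap \ones{\psi}} s_{i,i+1} = 0$.

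For the second assertion, suppose by contradiction that $\ones{\varphi} \cap \ones{\psi} = \{(i,i+1)\}$ for some index~$i$. Then the formula just proved forces $s_{i,i+1} = 0$ as an element of $\R(\motion)$, i.e., the tetrahedron $(i, i+1, \Top, \Bottom)$ has zero oriented volume along the entire motion. But this contradicts the remark following \Cref{definition:motion}, according to which the dihedral angle at an equatorial edge cannot stay constant along a motion (equivalently, $s_{i,i+1}$ cannot be constant, hence in particular not identically zero). No real obstacle is expected here: the argument is purely a bookkeeping computation combined with the already-established vanishing lemma and the non-degeneracy remark.
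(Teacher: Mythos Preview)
Your proposal is correct and follows essentially the same approach as the paper: apply \Cref{lemma:oriented_volume_zero_sum} to $\varphi$, $\psi$, and $\varphi+\psi$, then combine the three vanishing sums via the identity relating $A$, $B$, $A\triangle B$, and $A\cap B$ to isolate $2\sum_{e\in\ones{\varphi}\cap\ones{\psi}} s_e$. The paper phrases the combination as ``subtracting the third equation from the sum of the first two'' and justifies the second assertion with the same non-degeneracy observation you invoke.
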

\begin{proof}
	\Cref{lemma:oriented_volume_zero_sum}
	applied to $\varphi$, $\psi$, and $\varphi + \psi$,
	yields
	\[
		\sum_{(i,i+1)\in \ones{\varphi}} s_{i,i+1} = 0\,, \qquad
		\sum_{(i,i+1)\in \ones{\psi}} s_{i,i+1} = 0\,, \qquad
		\sum_{(i,i+1)\in S} s_{i,i+1} = 0\,,
	\]
	where $S = (\ones{\varphi}\cup \ones{\psi}) \setminus (\ones{\varphi}\cap \ones{\psi})$.
	Subtracting the third equation from the sum of the first two gives the first statement.
	The second one follows since no $s_{i,i+1}$ can be constantly zero.
\end{proof}

The simplest possible Galois group arises only for special motions.

\begin{proposition}
\label{proposition:Z2_implies_flat poses}
	If $\Gal = \langle (1,1,\ldots,1) \rangle$,
	then there are two flat poses of the whole bipyramid.
\end{proposition}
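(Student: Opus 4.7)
The plan is to exploit the $\Z_2$-Galois hypothesis to present the motion $\motion$ as a hyperelliptic-like double cover of the $d_{\Top\Bottom}$-line and then identify its real ramification points with flat poses of the bipyramid. Set $s := s_{1,2}$. Since the nontrivial element $\varphi$ of $\Gal$ negates every $s_{i,i+1}$, we have $\R(\motion) = \R(d_{\Top\Bottom})(s)$ with $s^2 = F(d_{\Top\Bottom})$ for some non-constant polynomial $F$ (by \Cref{lemma:equation_oriented_volume}). Thus the map $d_{\Top\Bottom} \colon \motion \to \mathbb{A}^1$ is a degree-$2$ cover whose nontrivial deck transformation is induced by $\varphi$.

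The first half of the argument shows that every fixed point $P$ of $\varphi$ on $\motion$ corresponds to a flat pose of the whole bipyramid. At such $P$, the identity $\varphi(s_{i,i+1}) = -s_{i,i+1}$ forces $s_{i,i+1}(P) = 0$ for every $i$, so every almost tetrahedron $(i, i+1, \Top, \Bottom)$ is degenerate at $P$. By non-degeneracy of the motion, each triangle $\{i, \Top, \Bottom\}$ is non-collinear, hence spans a unique plane $\pi_i$; the degenerate almost tetrahedron $(i, i+1, \Top, \Bottom)$ then forces $i+1 \in \pi_i$, so $\pi_{i+1} = \pi_i$. Iterating around the equator, all vertices of $\Bipyramid$ are coplanar, that is, $P$ is a flat realization.

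The second half produces two distinct real fixed points of $\varphi$. A flex yields a continuous arc in $\motion(\R)$ along which $d_{\Top\Bottom}$ is non-constant (by non-degeneracy) and bounded (by the triangle inequality on $\{\Top, i, \Bottom\}$). Hence $d_{\Top\Bottom}$ attains a strict minimum at some $P_{\min}$ and a strict maximum at some $P_{\max}$, and these are distinct. At each extremum, if $s(P) \neq 0$ then the two sheets $\pm s$ of the cover would supply real lifts of nearby $d$-values on \emph{both} sides of $d_{\Top\Bottom}(P)$, contradicting extremality. Therefore $s(P_{\min}) = s(P_{\max}) = 0$, and by the first half both points are flat poses of $\Bipyramid$. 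The main obstacle is precisely this last step: Galois theory only provides ramification over $\mathbb{C}$ a priori, and it is the physical existence of a flex that forces the real branch locus to be non-empty---in fact, to contain at least the two extrema above.
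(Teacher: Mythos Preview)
Your approach is correct and genuinely different from the paper's. The paper argues the contrapositive: if some almost tetrahedron fails to flatten at the extreme value $m^2$ of $d_{\Top\Bottom}$, it manufactures three distinct real preimages of a nearby value of $\distance$, contradicting $|\Gal| = 2$. You instead work directly: the hypothesis $\Gal \cong \Z_2$ presents $\motion$ birationally as the double cover $s^2 = F(d_{\Top\Bottom})$, real extrema of $d_{\Top\Bottom}$ on the compact set $\motion(\R)$ must lie over ramification values, and ramification forces flatness. Your route is shorter and exposes the geometry of the cover cleanly; the paper's is more hands-on and avoids any appeal to a birational model.

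One step needs tightening. You prove $s_{1,2}(P) = 0$ at an extremum and then invoke the first half, which is about \emph{fixed points of~$\varphi$}. But $s_{1,2}(P) = 0$ alone does not force $\varphi(P) = P$: the birational morphism $(d_{\Top\Bottom}, s_{1,2}) \colon \motion \to \{s^2 = F(d)\}$ may fail to separate points over a singular target point (a multiple root of~$F$), so $P$ and $\varphi(P)$ could in principle be distinct with the same image. The cleanest fix bypasses $\varphi$ altogether: your extremum argument works verbatim with any $s_e$ in place of $s_{1,2}$, since each $s_e$ also generates $\R(\motion)$ over $\R(d_{\Top\Bottom})$ and satisfies $s_e^2 = F_e(d_{\Top\Bottom})$. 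Hence $s_e(P) = 0$ for every~$e$, and your coplanarity chain then yields flatness directly. (A second minor point: to guarantee the extrema are attained, work on the connected component of $\motion(\R)$ containing the flex rather than on the image of the half-open interval $[0,1)$; this component is compact because $\motion(\R)$ is.)
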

\begin{proof}
  The proof is in the same spirit as \Cref{lemma:quadratic_extension}:
  we show that if there is at most one flat pose,
  then the Galois group $\Gal$ has more than $2$ elements,
  by inferring that the complexification of the map $\distance \colon \motion \dashrightarrow \R$ has degree greater than $2$.

  For each equatorial edge $e \in \Eq$,
  let $m_e$ be the minimal distance between $\Top$ and $\Bottom$ obtainable in a realization of $\mathbb{T}_e$,
  that induces the edge lengths of the motion under consideration;
  likewise, let $M_e$ be the maximal such distance.
  These two distances correspond to the two flat poses of~$\mathbb{T}_e$.
  Notice that it is possible that neither $m_e$ nor $M_e$ is in the image of~$\distance$.
  Define now $m = \max\{m_e\}_{e \in \Eq}$ and $M = \min\{M_e\}_{e \in \Eq}$.
  Notice that the interval $(m, M)$ is not empty since $\motion$ is a motion according to \Cref{definition:motion}.

  If $m_e = m$ for all $e \in \Eq$,
  then the whole bipyramid has a flat pose with $\distance = m^2$.
  Hence, if the whole bipyramid does not have a flat pose with $\distance = m^2$,
  then some $m_{\bar{e}}$ is smaller than $m$.
  This implies that $\mathbb{T}_{\bar{e}}$ is not flat when $\distance = m^2$, since $m_{\bar{e}} < m < M \leq M_{\bar{e}}$.
  Let $e' \in \Eq$ be such that $m_{e'} = m$.

  We claim that $m^2 \in \distance(\motion_{e'})$.
  To show this, we exhibit a configuration $x \in \motion$ such that $\distance(x) = m^2$.
  Consider the complexification~$(\distance)_{\mathbb{C}}$ of the map $\distance \colon \motion \dashrightarrow \R$.
  The image~$Z$ of the latter map equals~$\mathbb{C}$ with at most finitely many points removed.
  Hence, there exists $\varepsilon > 0$ such that $(m^2, m^2 + \varepsilon) \subset Z$.
  For each $t \in (m^2, m^2 + \varepsilon)$, let $x_t \in (\distance)_{\mathbb{C}}^{-1}(t)$.
  We argue that actually $x_t$ is a real point.
  Indeed, all the values of $\{d_{e}\}_{e \in E_{\Bipyramid}}$ in~$x_t$ are real by construction.
  Moreover, since $\distance(x_t) = t$ and $m^2 \leq t \leq M^2$,
  \Cref{lemma:equation_oriented_volume} shows that all the values of $\{s_e\}_{e \in \Eq}$ in~$x_t$ are real.
  Then \Cref{corollary:rational_functions} implies that all the values $\{d_{uv}\}_{u, v \in V_{\Bipyramid}}$ in~$x_t$ are real, making $x_t$ a real point.
  Since $\motion$ is compact in the Euclidean topology, then, up to taking a subsequence, the limit $x = \lim_{t \to m^2} x_t$ exists in $\motion$.
  By continuity, $\distance(x) = m^2$.

  Let $\mathcal{U} \subset \motion$ be such that $\pi_{e'}(\mathcal{U})$ is a neighborhood of~$\pi_{e'}(x)$.
  Notice that the value of $s_{e'} = 0$ in $x$ since $\distance(x) = m^2$.
  Let $t \in \distance(\mathcal{U})$.
  Then there exist $y_1, y_2 \in \mathcal{U}$ such that $s_{e'}$ attains opposite sign in~$y_1$ and $y_2$ and $\distance(y_1) = \distance(y_2) = t$.
  Possibly shrinking $\mathcal{U}$, we can assume that $s_{\bar{e}}$ attains the same sign at $y_1$ and $y_2$.
  Since the map $\distance \colon \motion_{\bar{e}} \dashrightarrow \R$ is $2:1$, there exists a configuration $y_3 \in \motion$ such that $\distance(y_3) = t$,
  but $s_{\bar{e}}$ attains opposite signs at $y_2$ and $y_3$.
  This implies that $\distance \colon \motion \dashrightarrow \R$ has a fiber of cardinality $3$.

  Therefore, if $\Gal$ has cardinality $2$, then there must be a flat pose of $\Bipyramid$
  where the distance between~$\Top$ and~$\Bottom$ is~$m$.
  Arguing similarly, if $\Gal$ has cardinality $2$, then there must be a flat pose of $\Bipyramid$
  where the distance between~$\Top$ and~$\Bottom$ is~$M$.
\end{proof}

From now on we focus on pentagonal bipyramids, so we suppose that $\motion$ is a non-degenerate motion of~$\Penta$ with fixed edge lengths.

\begin{lemma}
\label{lemma:all_ones}
 The element $1^5 := (1,1,1,1,1)$ belongs to~$\Gal$.
\end{lemma}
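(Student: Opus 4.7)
The plan is to argue by contradiction: assume $1^5 \notin \Gal$ and derive a contradiction from the constraints on $\Gal \subset \Z_2^{\Eq}$ already collected in this section, together with \Cref{lemma:element11}. The three inputs I will use are \Cref{corollary:no_one} (no element of $\Gal$ has a single $1$ or a single $0$, so for $n=5$ the admissible weights lie in $\{0,2,3,5\}$), \Cref{corollary:no_single_one_simultaneously} (any two elements $\varphi,\psi \in \Gal$ satisfy $|\ones{\varphi}\cap\ones{\psi}|\neq 1$), and \Cref{lemma:element11} (every pair of equatorial edges is contained in $\ones{\varphi}$ for some $\varphi\in\Gal$, and there are $\binom{5}{2}=10$ such pairs).

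First I catalog the interactions between elements of admissible weights, using that $\Gal$ is closed under $+$ and that the sum of elements with supports $A, B$ has weight $|A|+|B|-2|A\cap B|$. For two distinct weight-$2$ elements the constraint forces intersection $0$, producing a sum of weight $4$, which is forbidden; hence $\Gal$ contains at most one weight-$2$ element. For a weight-$3$ element $\varphi$ and a weight-$2$ element $\psi$ the constraint leaves intersection $0$ or $2$: intersection $2$ yields a weight-$1$ sum (forbidden), while intersection $0$ makes $\ones{\psi}$ the complement of $\ones{\varphi}$ and $\varphi+\psi=1^5$, forcing $1^5\in\Gal$ and contradicting the standing assumption. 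For two distinct weight-$3$ elements the intersection is at least $1$ (by inclusion-exclusion in a $5$-element ambient set), must avoid $1$, and cannot reach $3$ unless the elements coincide, so the intersection is $2$ and the sum has weight $2$; hence two distinct weight-$3$ elements force a weight-$2$ element to be present as well.

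Combining these observations under the assumption $1^5\notin\Gal$ leaves only three possibilities: $\Gal=\{0\}$, $\Gal=\{0,\psi\}$ with $\psi$ of weight~$2$, or $\Gal=\{0,\varphi\}$ with $\varphi$ of weight~$3$ (the mixed case having already been excluded, and the two-weight-$3$ case reducing to the mixed one). An element of weight $w$ contains $\binom{w}{2}$ pairs in its support, so in these three scenarios $\Gal$ covers at most $0$, $1$, or $3$ distinct pairs of equatorial edges, while \Cref{lemma:element11} demands covering all $10$. This contradiction establishes $1^5\in\Gal$.

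The main obstacle is not any single technical step but ensuring that the weight interactions leave no escape route and that the bookkeeping is tight. The argument uses $n=5$ essentially: it is precisely the value for which the forbidden weights $1$ and $n-1$ keep every non-trivial support either so small or so large that weights $2$ and $3$ alone cannot meet the pair-covering demand, while at the same time their interaction already produces $1^5$.
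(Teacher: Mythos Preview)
Your proof is correct and follows essentially the same approach as the paper: both use \Cref{corollary:no_one}, \Cref{corollary:no_single_one_simultaneously}, and \Cref{lemma:element11} to run a short case analysis on the possible weights of nonzero elements of~$\Gal$. The only cosmetic difference is the endgame: after reducing (under the assumption $1^5\notin\Gal$) to $|\Gal|\le 2$, you finish with a pair-counting argument ($\binom{5}{2}=10$ pairs to cover versus at most $3$), whereas the paper instead invokes \Cref{lemma:element11} on two specific disjoint pairs of adjacent edges to produce two weight-$2$ elements whose sum has weight~$4$.
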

\begin{proof}
Suppose that there is an element $\varphi\in\Gal$ with exactly three ones.
Let $e_1,e_2$ be the two equatorial edges such that $e_1, e_2 \notin \ones{\varphi}$.
By \Cref{lemma:element11}, there is $\psi\in\Gal$ such that $e_1,e_2 \in\ones{\psi}$.
If $\psi=1^5$, we are done.
Otherwise, there cannot be four nor three $1$'s in $\psi$
by \Cref{corollary:no_single_one_simultaneously,corollary:no_one}.
Hence, $\varphi+\psi=1^5$.

Therefore, we can assume that every element has either two or five $1$'s.
By \Cref{lemma:element11},
there are $\gamma_1,\gamma_2\in\Gal$
such that $\{(1,2),(2,3)\}\subseteq\ones{\gamma_1}$
and $\{(3,4),(4,5)\}\subseteq\ones{\gamma_2}$.
One of them has to be~$1^5$, otherwise $\gamma_1+\gamma_2$ has four $1$'s,
which contradicts \Cref{corollary:no_one}.
\end{proof}

In order to proceed, we recall the following known result, mentioned in \cite{Stachel1987, Nawratil2011}.

\begin{lemma}
\label{lemma:pyramid_flat_base}
	A pyramid with a quadrilateral base flexes so that the base stays coplanar if and only if
	the base is an antiparallelogram and the apex of the pyramid lies at the symmetry plane
	of the antiparallelogram.
\end{lemma}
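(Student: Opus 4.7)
The plan is to prove the \emph{if} direction by an explicit symmetry-based construction, and the \emph{only if} direction by analyzing the locus, in the (moving) base plane, of the foot of the perpendicular from the apex.

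For the \emph{if} direction, I would use the classical fact that an antiparallelogram linkage with consecutive side lengths $(a,b,a,b)$ admits a one-parameter planar flex along which it remains an antiparallelogram and retains its axis of reflection symmetry $\ell$. Placing the apex $A$ on the plane $\Pi$ perpendicular to the base plane through $\ell$ makes the whole pyramid invariant under the reflection $\sigma$ that fixes $A$ and swaps the two paired vertices of the base. Consequently, the four apex-to-base distances are automatically preserved by the planar flex of the base, and we obtain a flex of the pyramid with coplanar base.

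For the \emph{only if} direction, fix $A$ at the origin, and for each $t$ let $F(t)$ be the foot of the perpendicular from $A$ to the base plane $\pi(t)$, and set $h(t) = |AF(t)|$. Since $|AP_i| = \ell_i$ is constant, one has $|F(t)P_i(t)|^2 = \ell_i^2 - h(t)^2$, so every difference $|FP_i|^2 - |FP_j|^2 = \ell_i^2 - \ell_j^2$ is constant in $t$. Equivalently, $F(t)$ projects orthogonally onto the line through each edge $\{P_i,P_{i+1}\}$ at a point whose signed distance from the midpoint of the edge is prescribed by $\ell_i^2 - \ell_{i+1}^2$ and $|P_iP_{i+1}|$. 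Hence $F(t)$ lies on four specified lines, each perpendicular to one base edge, and these four lines must be concurrent for every $t$ during the flex.

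Parametrizing the planar 4-bar linkage by a single angle $\phi$ (say via a tangent-half-angle substitution), the concurrency condition becomes a rational identity in $\phi$ that must hold on an interval. Expanding and comparing coefficients forces the pairing of edge lengths $m_1=m_3$ and $m_2=m_4$, so the base is either a parallelogram or an antiparallelogram. The parallelogram case is ruled out since its one-parameter motion is a rigid translation in the plane (no genuine flex compatible with a non-collinear apex). Therefore the base is an antiparallelogram, and since $F(t)$ is equidistant from the two paired vertices in each pair at every instant, $F(t)$ lies on their common perpendicular bisector, which is precisely the symmetry line $\ell$; hence $A$ lies on the corresponding symmetry plane $\Pi$. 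The main obstacle is the algebraic manipulation that extracts $m_1=m_3$ and $m_2=m_4$ from the concurrency identity; a cleaner alternative would be to reflect $A$ across the base plane to obtain a companion apex $A'$ and invoke Bricard's classification of flexible octahedra applied to the (degenerate) situation in which all four equatorial vertices lie on the plane of symmetry, which forces the same antiparallelogram structure.
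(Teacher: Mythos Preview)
Your plan differs entirely from the paper's treatment: the paper cites the result as known (referring to Stachel and Nawratil) and defers to a computer-algebra verification in the supporting code, giving no human-readable argument. Your foot-of-perpendicular approach is the natural direct route and, once the algebra is carried out, would be considerably more informative; the Bricard-via-reflected-apex alternative is also viable, though you would have to test all three Bricard types against the extra constraint $|AP_i|=|A'P_i|$ rather than simply invoke the classification.

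There are, however, two genuine gaps in the \emph{only if} direction beyond the algebra you already flag. First, your dismissal of the parallelogram case is wrong: a parallelogram four-bar linkage \emph{does} flex in the plane (the interior angles change; only the coupler link translates relative to the ground link, not the whole quadrilateral). The correct reason the parallelogram mode is excluded is that your own concurrency condition fails there: with $P_1=(0,0)$, $P_2=(a,0)$, $P_4=(b\cos\theta,b\sin\theta)$, $P_3=P_2+P_4$, the equation $|FP_1|^2-|FP_2|^2=\mathrm{const}$ pins $F_x$ independently of~$\theta$, whereas $|FP_4|^2-|FP_3|^2=\mathrm{const}$ forces $F_x$ to vary with~$\cos\theta$. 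Second, your final step ``$F(t)$ is equidistant from the two paired vertices'' presupposes $\ell_1=\ell_3$ and $\ell_2=\ell_4$, which you have not yet derived; these relations on the lateral edges must also be extracted from the coefficient comparison (they amount to $c_{12}=c_{34}$ and $c_{23}=c_{41}$), and only then does $F$ lie on the symmetry line and hence $A$ on the symmetry plane.
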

\begin{proof}
 The result is known, we provide a computer algebra proof for self-containedness; see \cite[Section~1.1]{supportingCode}.
\end{proof}

\begin{lemma}
\label{lemma:planar_non_exists}
 There is no non-degenerate motion of a pentagonal bipyramid such that the realizations of the vertices~$1$, $3$, $\Top$, and $\Bottom$ stay coplanar during the flex.
 In particular, the element $(1,1,0,0,0)$ does not belong to $\Gal$.
\end{lemma}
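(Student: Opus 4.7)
The plan is to suppose for contradiction a non-degenerate motion of $\Penta$ during which vertices $1, 3, \Top, \Bottom$ remain coplanar, and to use the antiparallelogram structure coming from \Cref{lemma:pyramid_flat_base} together with a coordinate analysis of vertices $4$ and $5$ to produce a non-edge of $\Penta$ whose length is constant along the motion, contradicting \Cref{definition:motion}.

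First, observe that the sub-graph of $\Penta$ on $\{1,2,3,\Top,\Bottom\}$ is a quadrilateral pyramid with apex $2$ and base $(1,\Top,3,\Bottom)$; since the edge lengths are fixed and the base is coplanar by hypothesis, this pyramid flexes with a flat base. \Cref{lemma:pyramid_flat_base} then forces the base to be an antiparallelogram (so $\lambda_{1,\Top} = \lambda_{3,\Bottom}$ and $\lambda_{\Top,3} = \lambda_{\Bottom,1}$) with apex $2$ on the symmetry plane. The corresponding reflection $\sigma$ swaps $1 \leftrightarrow 3$ and $\Top \leftrightarrow \Bottom$ and fixes $2$, so $\lambda_{1,2} = \lambda_{2,3}$ and $\lambda_{\Top,2} = \lambda_{\Bottom,2}$ as well.

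I would then choose coordinates at each moment so that the antiparallelogram lies in the $xy$-plane and $\sigma$ is the reflection across the $yz$-plane: write $1 = (a,0,0)$, $3 = (-a,0,0)$, $\Top = (b,h,0)$, $\Bottom = (-b,h,0)$, where the side-length equations of the antiparallelogram make $ab$ a nonzero constant. Non-degeneracy guarantees that the non-edge distances $d_{1,3} = 4a^2$ and $d_{\Top,\Bottom} = 4b^2$ actually vary, so $a$ (and hence $b,h$) genuinely flex. The three equations $|4-\Top|^2 = \lambda_{4,\Top}^2$, $|4-\Bottom|^2 = \lambda_{4,\Bottom}^2$, $|4-3|^2 = \lambda_{4,3}^2$ then give $4_x = (\lambda_{4,\Bottom}^2 - \lambda_{4,\Top}^2)/(4b)$, an explicit rational expression for $4_y$ in $a$, and a constant value for $4_z^2$ (the squared distance from $4$ to the $xy$-plane, which contains $\Top,\Bottom,3$). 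A parallel analysis using distances to $\Top,\Bottom,1$ determines vertex $5$. Enforcing the remaining edge constraint $|4-5|^2 = \lambda_{4,5}^2$ and clearing denominators yields a polynomial identity in $a$; comparing coefficients, and discarding the degenerate subcases in which the antiparallelogram collapses, forces $\lambda_{4,\Top} = \lambda_{4,\Bottom}$ and $\lambda_{5,\Top} = \lambda_{5,\Bottom}$.

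Granted these two equalities, $4_x = 5_x = 0$, so both $4$ and $5$ lie on the symmetry plane and are fixed by $\sigma$. Then $\sigma$ is an isometry of the entire realization, and $|1-4| = |\sigma(3) - \sigma(4)| = |3-4| = \lambda_{3,4}$ for every moment of the motion. Since $\{1,4\}$ is a non-edge of $\Penta$ whose length is forced to be constant, this contradicts non-degeneracy. Finally, the ``in particular'' assertion is immediate: if $(1,1,0,0,0) \in \Gal$, then \Cref{lemma:consecutive_ones} applied to the two consecutive ones at positions $(1,2)$ and $(2,3)$ yields $\varsigma_{1,3} = 0$, i.e., the coplanarity we just ruled out. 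The main obstacle in executing the plan is the coefficient comparison in paragraph three: it is routine but lengthy, and could be delegated to a computer algebra check in the style already used elsewhere in the paper.
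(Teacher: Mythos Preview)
Your approach and the paper's agree at the start and at the finish but diverge completely in the middle. Both invoke \Cref{lemma:pyramid_flat_base} to get the antiparallelogram on $(1,\Top,3,\Bottom)$ with $2$ on its symmetry plane, and both conclude by placing one of $4,5$ on that plane so that a non-edge distance freezes. The paper, however, never writes coordinates for $4$ or $5$: it instead exploits the freedom in vertex~$2$ by building a one-parameter family $\{\motion_s\}_{s \in \N}$ of flexible bipyramids (sliding $2$ along the symmetry plane while keeping $1,3,4,5,\Top,\Bottom$ fixed), applies Mikhalev's theorem \cite{Mikhalev2001} to each member, and notes that the only induced $4$-cycles through $\{2,\Top\}$ are $(\Top,2,\Bottom,4)$ and $(\Top,2,\Bottom,5)$. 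The resulting linear edge-length relation, valid for infinitely many~$s$ while only $\lambda^s_{2\Top}=\lambda^s_{2\Bottom}$ varies with~$s$, then forces $\lambda_{5\Top}=\lambda_{5\Bottom}$ (or the analogue for~$4$). Your route would avoid this external citation at the price of a direct elimination.

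That elimination, though, is not set up correctly in your outline. Your claim that $4_z^2$ is constant is false: the triangle $3\Top\Bottom$ has two fixed sides $\lambda_{3\Top},\lambda_{3\Bottom}$ but its third side $d_{\Top\Bottom}$ varies along the flex, so the height of~$4$ above the plane this triangle spans varies too. More seriously, once this is corrected the identity coming from $|4-5|^2=\lambda_{4,5}^2$ is not a polynomial in~$a$ after merely ``clearing denominators'': the quantities $4_z,5_z$ (and $h$) enter as genuine square roots of expressions in~$a$, so one must clear radicals, and the resulting constraint is of much higher degree than your sketch suggests. You have not shown---and it is not obvious---that the outcome singles out $\lambda_{4\Top}=\lambda_{4\Bottom}$ and $\lambda_{5\Top}=\lambda_{5\Bottom}$ rather than some more complicated locus in the edge-length parameters; note that the paper only ever obtains \emph{one} of these two equalities, which already suffices for the contradiction. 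Your endgame and the ``in particular'' deduction via \Cref{lemma:consecutive_ones} are fine.
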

\begin{proof}
 Assume that there exists such a non-degenerate motion whose general element has~$1$, $3$, $\Top$, and $\Bottom$ coplanar.
 By \Cref{lemma:pyramid_flat_base}, the points $1$, $3$, $\Top$, $\Bottom$ move as an antiparallelogram.

 Let $\Pyramid{2}$ denote the subgraph of $\Bipyramid$
 induced by the vertices $1,2,3,\Top$ and $\Bottom$.
 We construct a parametrized family of flexible pyramids
 that have the base vertices $1$, $3$, $\Top$, and~$\Bottom$ in common.
 This is achieved by selecting, for a realization of $1, 3, \Top, \Bottom$ in $\motion$ as an antiparallelogram and for any $s \in \N$, a different realization for $2$ that belongs to the symmetry plane of the antiparallelogram.
 Each of these realizations of~$\Pyramid{2}$ determines a motion~$\mathcal{N}_s$ of~$\Pyramid{2}$ so that the basis stays always coplanar.
 Hence, we can obtain a family $\{ \mathcal{N}_s \}_{s \in \N}$ such that $\mathcal{N}_0$ coincides with $\motion$ restricted to $\Pyramid{2}$ and for any $s \in \N$, the restriction of $\mathcal{N}_s$ to $\{1, 3, \Top, \Bottom\}$ coincides with the restriction of~$\motion$ to the same vertices.

 Now, from the motions~$\{ \mathcal{N}_s \}_{s \in \N}$, we determine motions~$\{ \motion_s \}_{s \in \N}$ for~$\Penta$ so that $\motion_0 = \motion$.
 To do so, for each $s \in \N$, we replace the realizations of vertex~$2$ in~$\motion$ by those of~$\mathcal{N}_s$.
 What we obtain are still motions of~$\Penta$ with given edge lengths, because what we are doing is to glue the realizations of the vertices $\{1,3,4,5,\Top,\Bottom\}$ with a pyramid with a vertex at the realization of $2$ along an antiparallelogram, and any pyramid with that antiparallelogram as a base fits.
 We denote by~$\lambda_{ab}^s$ the edge length of the edge $\{a,b\}$ in~$\motion_s$ for $s \in \N$.
 For any of these motions, we have $\lambda_{2\Top}^s = \lambda_{2\Bottom}^s$.

 For each of the motions~$\{ \motion_s \}_{s \in \N}$,
 the dihedral angle at the edge $\{2, \Top\}$ (or equivalently, the distance between $1$ and $3$) changes.
 By~\cite[Theorem~2]{Mikhalev2001} (see also~\cite{Gallet2022} for a generalization),
 there is hence an induced cycle~$(v_1, \dotsc, v_\ell, v_{\ell+1} = v_1)$ in~$\Penta$ containing $\{2, \Top\}$ such that we have $\sum_{i = 1}^{\ell} \varepsilon_i \lambda^s_{v_i, v_{i+1}} = 0$,  where $\varepsilon_i \in \{-1, +1\}$.
 However, there are only two possible induced cycles in the case of~$\Penta$, namely, $(\Top, 2, \Bottom, 4, \Top)$
 and $(\Top, 2, \Bottom, 5, \Top)$.
 The following part of the proof works analogously for either cycle, so from now on we suppose that the cycle is $(\Top, 2, \Bottom, 5, \Top)$. It follows that, for infinitely many $s \in \N$:
 \[
  \pm \lambda_{\Top 2}^s \pm \lambda_{2\Bottom}^s \pm \lambda_{\Bottom 5}^s \pm \lambda_{5 \Top}^s = 0 \,,
 \]
 (where $\pm$ here denotes \emph{some} choice between plus or minus, and not \emph{any} choice; the signs can be different for different $s$ but this does not influence the argument as there are only finitely many possibilities and infinitely many $s$').
 Since $\lambda_{\Top 2}^s = \lambda_{2\Bottom}^s$ for every $s \in \N$ and $\lambda_{\Bottom 5}^s$ and $\pm \lambda_{5 \Top}^s$ do not depend on~$s$, the only possibility is that $\lambda_{\Bottom 5}^s = \lambda_{5 \Top}^s$.

 It follows that the realizations of~$5$ belong to the symmetry plane of the segment between the realizations of~$\Top$ and~$\Bottom$.
 This plane is the same as the symmetry plane of the segment between the realizations of~$1$ and $3$.
 Therefore, the distance between the realizations of~$5$ and~$1$ is the same as the distance between the realizations of~$5$ and~$3$.
 The first distance, however, is constant during the motion~$\motion$, since $\{1, 5\}$ is an edge in~$\Penta$; thus, also the second distance is constant.

 This would imply that the realizations of~$3$, $4$, $5$, and~$\Top$ stay always at the same distance during the motion~$\motion$, which contradicts the assumption that the dihedral angle at the edge~$\{4, \Top\}$ changes during~$\motion$, since the motion is assumed to be non-degenerate.

 Therefore, it can never happen that $1$, $3$, $\Top$ and~$\Bottom$ stay coplanar during the motion.
 This implies by \Cref{lemma:consecutive_ones} that the element $(1,1,0,0,0)$ does not belong to~$\Gal$.
\end{proof}

\begin{proposition}
\label{proposition:galois_groups}
 Up to dihedral symmetry, the possibilities for~$\Gal$ are
 \[
  \begin{array}{l}
   \left\langle 1^5 \right\rangle \,, \\
   \left\langle 1^5, (1,0,1,0,0) \right\rangle \,.
  \end{array}
 \]
\end{proposition}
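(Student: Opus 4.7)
The plan is to perform a finite case analysis on $\Gal$ as a subgroup of $\Z_2^{\Eq}$, exploiting all the structural constraints already proved. Since $1^5 \in \Gal$ by \Cref{lemma:all_ones}, we have $|\Gal| \ge 2$, and the case $\Gal = \langle 1^5 \rangle$ yields the first possibility. The remainder of the argument handles $|\Gal| > 2$ and aims to pin $\Gal$ down to a group of order $4$ of the stated form.

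First, I would reduce to the existence of a weight-$2$ element in $\Gal$. By \Cref{corollary:no_one}, every nontrivial element has weight $2$, $3$, or $5$, and the only weight-$5$ element is $1^5$ itself. Hence any element of $\Gal \setminus \{0, 1^5\}$ has weight $2$ or $3$, and in the weight-$3$ case adding $1^5$ produces a weight-$2$ element of $\Gal$. So $\Gal$ contains some weight-$2$ element $\varphi$. By \Cref{lemma:planar_non_exists} (together with the dihedral action on the equator that relabels the edges), the two ones of $\varphi$ cannot be consecutive in the equatorial $5$-cycle; since $D_5$ acts transitively on non-adjacent edge pairs, up to dihedral symmetry I may assume $\varphi = (1, 0, 1, 0, 0)$.

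To conclude, I would show that $\varphi$ is the unique weight-$2$ element of $\Gal$. For any other weight-$2$ element $\varphi' \in \Gal$, \Cref{corollary:no_single_one_simultaneously} forces $|\ones{\varphi} \cap \ones{\varphi'}| \in \{0, 2\}$: size $2$ gives $\varphi' = \varphi$, while size $0$ makes $\varphi + \varphi'$ a weight-$4$ element, excluded again by \Cref{corollary:no_one}. Symmetrically, $\varphi + 1^5$ is the unique weight-$3$ element, so $\Gal = \{0, \varphi, 1^5, \varphi + 1^5\} = \langle 1^5, (1,0,1,0,0) \rangle$. The main conceptual hurdle has really already been cleared by \Cref{lemma:planar_non_exists}, which rules out the consecutive-ones configuration via the antiparallelogram argument; what remains here is careful bookkeeping of a subgroup of $\Z_2^5$ under the weight and intersection constraints, plus the straightforward observation that the pairing $\psi \leftrightarrow \psi + 1^5$ reduces the weight-$3$ analysis to the weight-$2$ one without loss.
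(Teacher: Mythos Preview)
Your proposal is correct and follows essentially the same route as the paper's proof: both start from $1^5 \in \Gal$, reduce to weight-$2$ elements via the involution $\psi \mapsto \psi + 1^5$, exclude consecutive ones by \Cref{lemma:planar_non_exists}, normalize to $\varphi = (1,0,1,0,0)$ by dihedral symmetry, and then use \Cref{corollary:no_single_one_simultaneously} together with \Cref{corollary:no_one} to rule out any further weight-$2$ element. Your treatment of the uniqueness step is slightly more abstract (arguing via $|\ones{\varphi} \cap \ones{\varphi'}| \in \{0,2\}$ rather than listing the two candidate disjoint pairs), but this is a cosmetic difference, not a different strategy.
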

\begin{proof}
 From \Cref{lemma:all_ones} we know that $1^5 \in \Gal$.
 Hence, a first possibility for~$\Gal$ is to be the group $\left\langle 1^5 \right\rangle \cong \Z_2$.
 Suppose there is another non-trivial element in $\Gal$.
 By \Cref{corollary:no_one}, it has two or three $1$s.
 We focus on the elements with two ones, as each element with three $1$s
 can be obtained by summing up $1^5$ with an element with two $1$s.
 There cannot be an element with two neighboring $1$s by \Cref{lemma:planar_non_exists}
 (we count the first and last position being neighboring as well).
 Up to dihedral symmetry we can suppose that $\varphi = (1,0,1,0,0) \in \Gal$.
 By \Cref{corollary:no_single_one_simultaneously},
 the only other possible elements with two non-neighboring $1$'s are $(0,1,0,1,0)$ and $(0,1,0,0,1)$;
 however, these elements cannot be in~$\Gal$,
 because otherwise by adding any of them to~$\varphi$
 we would get an element with four~$1$s.
 This concludes the statement.
\end{proof}

To conclude the proof of our main result,
we need to show that we can construct instances of flexible pentagonal bipyramids
whose Galois groups match each of the two possibilities from \Cref{proposition:galois_groups}.

\begin{theorem}
\label{theorem:nelson_Z2}
 There exists a flexible pentagonal bipyramid obtained by the Nelson construction
 from two flexible octahedra of Type~III.
 This motion has Galois group~$\left\langle 1^5 \right\rangle$.
\end{theorem}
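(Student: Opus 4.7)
The plan is to construct explicitly a flexing fitting pair of two Type~III Bricard octahedra, use the Nelson construction to obtain a flexible pentagonal bipyramid, and then pin down the Galois group using the restrictions already proved in this section.

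First, I would recall Bricard's parametric description of Type~III octahedra, namely the linear relation among the four edge lengths of each induced 4-cycle together with the existence of two flat poses. Starting from a realization of the shared ``almost tetrahedron'' on the vertices $\{0, 3, \Top, \Bottom\}$, I would extend it in two different ways to Type~III octahedra $\Oct_\kasperl$ and $\Oct_\pezi$, leaving the new edge lengths incident to $\{1, 2\}$ and $\{4, 5\}$, respectively, as free parameters subject to the Type~III linear relations. Since both octahedra share a common flex of the almost tetrahedron on $\{0, 3, \Top, \Bottom\}$, the positions of vertices $1$ and $5$ are determined (up to discrete choices) along that flex.

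The key step, and the main obstacle, is enforcing the Nelson collinearity of $\rho_\kasperl(0)$, $\rho_\kasperl(1)$, $\rho_\pezi(5)$ along the entire flex rather than at a single instant. I would impose it at one realization, which cuts out an algebraic subvariety of the parameter space, and then verify, using the Type~III linear relations on the 4-cycles $(0, 1, \Top, \Bottom)$ and $(0, 5, \Top, \Bottom)$, that the collinearity is automatically propagated. I expect this verification to be the delicate part, handled by symbolic computation combined with the two-flat-pose structure of Type~III, which synchronizes the motions of the two octahedra. Once such a flexing fitting pair is exhibited, the Nelson construction produces a flexible pentagonal bipyramid and we let $\motion$ denote its motion.

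To identify the Galois group $\Gal$ of $\motion$, I invoke \Cref{proposition:galois_groups}: up to dihedral symmetry of the equator, $\Gal$ is either $\langle 1^5 \rangle$ or $\langle 1^5, (1,0,1,0,0) \rangle$. To exclude the larger group I would choose the free parameters generically so that the five equatorial edge lengths of the bipyramid are pairwise distinct. Then \Cref{corollary:edge_lengths_two_ones} forbids every element of $\Gal$ with exactly two ones, ruling out all dihedral images of $(1,0,1,0,0)$. No element with exactly three ones can lie in $\Gal$ either, because adding $1^5$ to it would yield an element with exactly two ones, which has just been excluded; combined with \Cref{corollary:no_one}, which eliminates elements with one or four ones, this leaves only the identity and $1^5$. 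Hence $\Gal = \langle 1^5 \rangle$, as required.
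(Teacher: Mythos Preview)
Your identification of the Galois group is exactly the paper's argument: invoke \Cref{proposition:galois_groups}, then use \Cref{corollary:edge_lengths_two_ones} to rule out any element with exactly two ones by arranging pairwise distinct equatorial edge lengths. That part is fine.

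The construction part, however, has a genuine gap. You acknowledge that propagating the collinearity of $\rho_\kasperl(0)$, $\rho_\kasperl(1)$, $\rho_\pezi(5)$ along the whole flex is ``the delicate part'', but you offer no mechanism for it beyond hope. Moreover, the relations you plan to exploit are stated on the wrong 4-cycles: in $\Oct_\kasperl$ the induced 4-cycles are $(0,1,2,3)$, $(0,\Top,2,\Bottom)$, $(1,\Top,3,\Bottom)$, not $(0,1,\Top,\Bottom)$, so the Type~III linear relations do not live where you put them. Without a correct argument here, the existence claim is not established. The two-flat-pose feature of Type~III does help synchronize the motions of $\Oct_\kasperl$ and $\Oct_\pezi$ along the common almost tetrahedron, but it does not by itself force $0$, $1$, $5$ to remain collinear; that is an extra algebraic constraint on the edge lengths whose compatibility with a one-parameter motion you have not checked.

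The paper takes a quite different route for the construction: it does not attempt a conceptual proof that a flexing fitting pair of Type~III octahedra exists, but instead writes down an explicit rational parametrization of the motion (produced via motion polynomials, a technique deferred to another paper) and then verifies symbolically that it is a Nelson construction from two Type~III octahedra and that the equatorial edge lengths are pairwise distinct. So where you propose a parameter-counting/propagation argument, the paper simply exhibits and verifies a single example.
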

\begin{proof}
 We exhibit an example that has been created via the technique of \emph{motion polynomials}.
 Explaining how this was done goes beyond the scope of this paper and will be the subject of a further one.
 For the time being, we limit ourselves to providing an explicit parametrization of the motion;
 from the parametrization, it is possible to check that the flexible pentagonal bipyramid comes
 from a Nelson construction applied to two Type III flexible octahedra;
 see \cite[Section~1.2]{supportingCode}.
 The parametrization is the following:
 \begin{align*}
  \rho_t(\Top) &= \left(0, \frac{1}{4}, 0\right), \qquad
  \rho_t(2) = \left(\frac{1}{6}, 0, 0\right), \qquad
  \rho_t(3) = \left(0, 0, 0\right),\\
  \rho_t(1) &= \frac{3}{28\,(9\,t^2 + 13)}\,\left(-9\,t^2 + 10, \frac{69\,t^2 + 161}{4}, 23\,t\right),\\
  \rho_t(4) &= \frac{989}{493\,\left(t^2 + 4) \, (4\,t^2 + 1\right)}\,\left(\frac{-4\,t^4 + t^2 - 4}{6}, 0, 2\,t^3 + 2\,t\right),\\
  \rho_t(5) &= \frac{69}{9794\,(9\,t^2 + 13)}\,\left(\frac{126\,t^4 - 2429\,t^2 - 1244}{t^2 + 4},
  \frac{231\,t^2 + 2971}{4}, \frac{-1063\,t^3 + 992\,t}{t^2 + 4}\right),\\
  \rho_t(\Bottom) &= \frac{1}{9}\,\left(-\frac{4}{3}, \frac{-t^2 + 1}{t^2 + 1}, \frac{2\,t}{t^2 + 1}\right)\,.
 \end{align*}
 One may notice that the triangle $\{2,3,\Top\}$ is fixed throughout the motion.
 There are two flat poses: all vertices are in the $xy$-plane for $t=0$ and infinity.
 The position of the removed vertex 0 in the Nelson construction is
 \[
  \rho_t(0) = \frac{23}{157}\,\left(\frac{-3\,t^2 + 12}{2\,t^2 + 8}, 2, \frac{6\,t}{t^2 + 4}\right)\,.
 \]
 The motion is illustrated in \Cref{fig:flexNelsonIII}.

 The Galois group of the motion has to be $\langle 1^5 \rangle$,
 otherwise there is a pair of equatorial edges with the same edge length
 by \Cref{corollary:edge_lengths_two_ones}:
 a direct symbolic computation shows that this is not the case;
 see \cite[Section~1.2]{supportingCode}.
\end{proof}

\begin{figure}[ht]
	\centering
	\begin{tikzpicture}
		\clip (1.75,-1.6) rectangle (15.7,-11.4);
		\foreach \i in {1,2,...,4}
		{
			\foreach \j [evaluate=\j as \k using int((\j-1)*4+\i)] in {1,2,3}
			{
				\node[] at (3.5*\i,-3.25*\j) {\includegraphics[width=3.5cm,trim=2.5cm 3cm 2.5cm 1.8cm,clip=true]{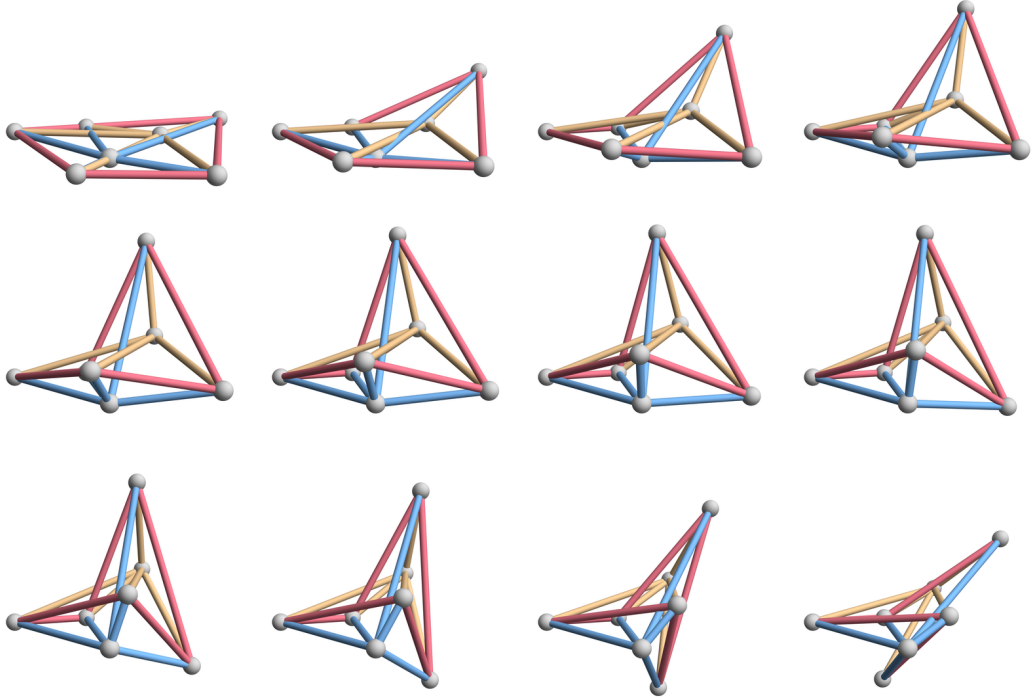}};
			}
		}
	\end{tikzpicture}
	\caption{Instances of the motion of the pentagonal bipyramid based on Nelson construction of two flexible octahedra with Type~III.}
	\label{fig:flexNelsonIII}
\end{figure}

\begin{theorem}
\label{theorem:nelson_opposite}
 There exists a $5$-dimensional family of flexible pentagonal bipyramids without a flat pose obtained
 by the Nelson construction from two flexible octahedra that are Type~I.
 These motions have Galois group $\left\langle 1^5, (1,0,1,0,0) \right\rangle$.
\end{theorem}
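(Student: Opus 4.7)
The plan is to construct the $5$-dimensional family explicitly via a Nelson gluing of two Type~I octahedra, show that the resulting motions generically have no flat pose, and then invoke the earlier structural results of \Cref{galois} to pin down the Galois group without ever identifying it by direct computation.

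First I would parametrize the construction as follows. Fix coordinates so that $\Top$ and $\Bottom$ are symmetric about the origin along a vertical axis. For a Type~I (line-symmetric) Bricard octahedron with non-edge pair $\{\Top,\Bottom\}$, the line-symmetry is a half-turn whose axis lies in the horizontal plane through the origin. Let $\Oct_\kasperl$ have swap pairs $(1,3)$, $(2,0)$, $(\Top,\Bottom)$, and $\Oct_\pezi$ have swap pairs $(3,5)$, $(4,0)$, $(\Top,\Bottom)$. I would choose shape parameters (positions of one representative of each swap pair, plus an axis direction) and a motion parameter for each octahedron, then impose the Nelson gluing: the shared vertices $0,3,\Top,\Bottom$ coincide, and the points $\rho_\kasperl(0), \rho_\kasperl(1), \rho_\pezi(5)$ are collinear. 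After quotienting by direct isometries and enforcing these constraints along the common motion, a careful dimension count should leave exactly five free parameters; as in \Cref{theorem:nelson_Z2} this is most naturally verified by producing an explicit rational parametrization and checking with symbolic computation.

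For the Galois group I would proceed by a short two-step argument that avoids directly exhibiting the involution. By \Cref{proposition:galois_groups}, the only candidates are $\left\langle 1^5 \right\rangle$ and $\left\langle 1^5, (1,0,1,0,0) \right\rangle$ (up to dihedral symmetry). To exclude the first I would invoke \Cref{proposition:Z2_implies_flat poses}, which asserts that $\Gal = \langle 1^5 \rangle$ forces two flat poses of the whole $\Penta$. Hence, exhibiting any member of the family whose motion $\motion$ has no flat pose of $\Penta$ immediately forces $\Gal = \left\langle 1^5, (1,0,1,0,0) \right\rangle$ for that member. Since ``having a flat pose'' is a Zariski-closed condition on the parameters of the construction, a single non-degenerate witness extends the conclusion to a Zariski-open subfamily, yielding the $5$-dimensional statement.

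The main obstacle I expect is verifying that the collinearity required by the Nelson construction can be maintained along the common motion of the two octahedra: a priori this is a codimension-$2$ condition at each instant, so the joint flexibility hinges on a subtle compatibility between the two line-symmetric motions, and it is precisely what determines the true number of surviving parameters. Once this compatibility is established parametrically, the remaining checks (non-degeneracy of $\motion$, absence of flat poses of~$\Penta$, and the dimension being exactly five) are Zariski-open conditions and are best carried out by explicit symbolic computation, analogously to the strategy of \Cref{theorem:nelson_Z2}; the actual embedded example of \Cref{example} would then serve as the concrete witness guaranteeing nonemptiness of the open subfamily.
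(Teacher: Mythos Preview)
Your overall architecture matches the paper's: reduce to the two possible Galois groups via \Cref{proposition:galois_groups}, then exclude $\langle 1^5\rangle$ by showing there is no flat pose and invoking \Cref{proposition:Z2_implies_flat poses}. That part is identical.

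Where you diverge is in the two supporting arguments, and in both places the paper is less computational than what you propose. For flexibility, the paper does \emph{not} produce an explicit rational parametrization of the joint motion and then verify the collinearity symbolically. Instead it parametrizes the space of \emph{fitting pairs} directly: choose two intersecting lines $\ell_1,\ell_2$, put $\Top,\Bottom$ on their common normal symmetric about the intersection, place vertex~$3$ freely, define $1$ and $5$ as reflections of~$3$ in $\ell_1,\ell_2$, place $0$ on the line through $1$ and $5$, and define $2,4$ as reflections of~$0$. In this $12$-parameter space the collinearity of $0,1,5$ holds \emph{by construction at every point}, so the obstacle you flag simply does not arise. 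The paper then measures seven squared edge lengths, observes two algebraic relations among them (one from Pythagoras on the isosceles triangle $1,5,\Top$, one from the explicit reflected coordinates), concludes that the image is at most $5$-dimensional, and applies Sard's theorem to get fibers of dimension $\geq 7$; modding out the $6$-dimensional isometry group yields a genuine flex. This simultaneously gives the $5$-dimensional family and the flexibility, without any symbolic motion computation.

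For the absence of a flat pose, the paper again avoids your witness-plus-Zariski-closure route and gives a short synthetic argument valid for the whole family: in a flat pose $\Top$ and $\Bottom$ cannot coincide (since $\lambda_{3\Top}\neq\lambda_{3\Bottom}$ by the generic choice of~$3$), hence the two symmetry axes $\ell_1,\ell_2$ would have to coincide, forcing $1$ and $5$ (both reflections of~$3$) to coincide, contradicting $\lambda_{1,2}\neq\lambda_{4,5}$. Your approach would also work, but the paper's is cleaner and uniform in the parameters.
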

\begin{proof}
 We show that such a Nelson construction from two Type~I flexible octahedra
 can be performed.
 The line-symmetry makes this case less involved than the previous one.
 To fix notations, let us suppose that $\Oct_{\kasperl}$ has
 vertices $\{0,1,2,3,\Top,\Bottom\}$ and non-edges $\{0,2\}$, $\{1,3\}$, and~$\{\Top, \Bottom\}$,
 while $\Oct_{\pezi}$ has vertices $\{0,3,4,5,\Top,\Bottom\}$ and non-edges $\{0,4\}$, $\{3,5\}$, and~$\{\Top, \Bottom\}$.
 We suppose that the Nelson construction is done so that the vertices~$0$, $1$, and~$5$ stay collinear
 during the flex.
 We start by constructing a fitting pair of Type~I flexible octahedra.
 \begin{itemize}
  \item
  Pick two lines $\ell_1$ and $\ell_2$ in~$\R^3$ intersecting in a point~$M$.
  \item
  Pick the realizations of~$\Top$ and $\Bottom$ so that they are on the common normal of~$\ell_1$ and~$\ell_2$ and their midpoint is~$M$.
  \item
  Put the realization of vertex~$3$ anywhere in~$\R^3$, except on the plane spanned by~$\ell_1$ and~$\ell_2$ and
  on the two planes that bisect the two angles determined by~$\ell_1$ and~$\ell_2$.
  \item
  Let the realizations of~$1$ and~$5$ be the reflections of~$3$ across~$\ell_1$ and~$\ell_2$, respectively.
  \item
  Put the realization of~$0$ anywhere on the line $15$.
  \item
  Define the realizations of~$2$ and~$4$ to be the reflections of~$0$ across~$\ell_1$ and~$\ell_2$, respectively.
 \end{itemize}
 By construction, the two realizations of~$\Oct_{\kasperl}$ and~$\Oct_{\pezi}$ determine a fitting pair and each of them allows a flex that provides a Type~I motion (see \Cref{basics:octahedra}).
 The question is whether these two Type~I motions actually determine a flex of the fitting pair.
 To prove this, notice that the previous construction depends on~$12$ real parameters:
 \begin{itemize}
  \item $7$ parameters for the two lines~$\ell_1$ and~$\ell_2$;
  \item $1$ parameter for the realizations of~$\Top$ and~$\Bottom$;
  \item $3$ parameters for the realization of~$3$;
  \item $0$ parameters for the realizations of~$1$ and~$5$ (i.e., they are uniquely determined);
  \item $1$ parameter for the realization of~$0$;
  \item $0$ parameters for the realizations of~$2$ and~$4$ (i.e., they are uniquely determined).
 \end{itemize}
 Hence, the locus of possible fitting pairs from the previous construction is diffeomorphic to an open subset~$\mathcal{U}$ of~$\R^{12}$.
 Given a fitting pair, we measure the squared distances of the edges
 \[
  \{0, 1\} \,, \{0, 5\} \,, \{0, \Top\} \,, \{1, \Top\} \,, \{2, \Top\} \,, \{3, \Top\} \,, \{1, 2\} \,.
 \]
 This yields a differentiable map $f \colon \mathcal{U} \rightarrow \R^7$.
 By the line symmetry of $\Oct_{\kasperl}$ and~$\Oct_{\pezi}$,
 the edge length of any of the other edge equals the edge length of some edge in the list.

 We claim that, if we take coordinates $\omega_{01}$, $\omega_{05}$, $\omega_{0\Top}$, $\omega_{1\Top}$, $\omega_{2\Top}$, $\omega_{3\Top}$, $\omega_{12}$ on~$\R^7$, then the image of~$f$ is contained in the zero set of the polynomials
 \[
  \omega_{1\Top} + \omega_{2\Top} - \omega_{0\Top} - \omega_{3\Top}
  \quad \text{and} \quad
  \omega_{01} \, \omega_{05} - (\omega_{0\Top} - \omega_{1\Top})^2 \,.
 \]
 The equation $\omega_{01} \, \omega_{05} = (\omega_{0\Top} - \omega_{1\Top})^2$ is a consequence of Pythagoras' theorem.
 In fact, let $\rho$ be a realization obtained by the previous procedure and $\lambda$ be the corresponding induced edge lengths.
 Consider the isosceles triangle given by the realizations of~$1$, $5$, and~$\Top$.
 Let $F$ be the foot of the height from~$\Top$ and set
 \[
  h := \left\| \rho(\Top) - \rho(F) \right\| \,, \;
  u := \left\| \rho(1) - \rho(F) \right\| = \left\| \rho(5) - \rho(F) \right\|
  \; \text{and} \;
  v := \left\| \rho(0) - \rho(F) \right\| \,.
 \]
 \begin{center}
  \begin{tikzpicture}
   \node[vertex, label={[labelsty,label distance=-2pt]90:$\Top$}] (T) at (0, 3) {};
   \node[vertex, label={[labelsty,label distance=-2pt]270:$1$}] (1) at (-3,0) {};
   \node[vertex, label={[labelsty,label distance=-2pt]270:$0$}] (0) at (-2,0) {};
   \node[vertex, label={[labelsty,label distance=-2pt]270:$5$}] (5) at (3,0) {};
   \node[vertex, label={[labelsty,label distance=-2pt]270:$F$}] (F) at (0,0) {};
   \draw[edge] (1)--(0);
   \draw[edge] (0) to node[labelsty,below] {$v$} (F);
   \draw[edge] (F) to node[labelsty,below] {$u$} (5);
   \draw[edge] (T) to node[labelsty,left] {$\lambda_{1 \Top}$} (1);
   \draw[edge] (T) to node[labelsty,near end, below right] {$\lambda_{0 \Top}$} (0);
   \draw[edge] (T) to node[labelsty,right] {$\lambda_{1\Top}$} (5);
   \draw[edge] (T) to node[labelsty,right] {$h$} (F);
  \end{tikzpicture}
 \end{center}
 Then
 \[
   \lambda_{1\Top}^2 = u^2 + h^2
   \quad \text{and} \quad
   \lambda_{0\Top}^2 = v^2 + h^2 \,,
 \]
 thus
 \[
   \lambda_{1\Top}^2 - \lambda_{0\Top}^2 =
  (u + v)(u - v) =
  \lambda_{01} \, \lambda_{15} \,.
 \]
 Squaring both sides implies the relation about the variables~$\omega$.
 The second equation $\omega_{1\Top} + \omega_{2\Top} = \omega_{0\Top} + \omega_{3\Top}$ can be derived as follows. Take coordinates so that the two lines~$\ell_1$ and $\ell_2$ lie on the $\{z = 0\}$ plane in~$\R^3$ and their intersection point~$M$ is~$(0,0,0)$.
 Then by performing the previous construction, one finds that the coordinates of the points in the realization are of the form
 \begin{align*}
  \rho(\Top) &= (0, 0, a) \,, & \rho(\Bottom) &= (0, 0, -a) \,, & \rho(3) &= (x_3, y_3, z_3) \,, \\
  \rho(1) &= (x_3, -y_3, -z_3) \,, & \rho(0) &= (x_0, y_0, -z_3) \,, & \rho(2) &= (x_0, -y_0, z_3) \,.
 \end{align*}
 A direct computation then shows that the equation in the $\omega$ variables holds.
 This concludes the proof of the claim.

 The claim implies that the image of~$f$ is at most $5$-dimensional.
 Then Sard's theorem (see for example \cite[Theorem~1.30]{Aubin2001}) guarantees that the fiber of~$f$ over a general point of its image is a submanifold of~$\mathcal{U}$ of dimension at least~$12 - 5 = 7$.
 Now, the group of direct isometries acts on the elements of~$\mathcal{U}$ and it is $6$-dimensional, which means that there exists at least a $1$-dimensional submanifold of realizations of fitting pairs in a general fiber of~$f$ whose elements are not pairwise congruent.
 This means that fitting pairs in such a fiber determine a flex.

 It remains to prove that the Galois group is $\left\langle 1^5, (1,0,1,0,0) \right\rangle$.
 Suppose that there is a flat pose.
 The vertices $\Top$ and $\Bottom$ cannot coincide,
 since $\lambda_{3\Top} \neq \lambda_{3\Bottom}$ by the construction.
 Hence, in this pose the symmetry lines of $\Oct_{\kasperl}$ and~$\Oct_{\pezi}$ coincide,
 which forces the vertices $1$ and $5$ to coincide as well, as they are both reflections of $3$.
 But this is again not possible as $\lambda_{13} \neq \lambda_{35}$
 by the assumption on the position of vertex $3$ in the construction.
 Therefore, there is no flat pose,
 so the Galois group is not $\langle 1^5 \rangle$ by \Cref{proposition:Z2_implies_flat poses}
 and the statement follows by \Cref{proposition:galois_groups}.
\end{proof}
An example of a motion constructed as in the proof is illustrated in \Cref{fig:flexNelsonI}.

\begin{figure}[ht]
	\centering
	\begin{tikzpicture}
		\foreach \i in {1,2,...,4}
		{
			\foreach \j [evaluate=\j as \k using int((\j-1)*4+\i)] in {1,2,3}
			{
				\node[] at (3.5*\i,-3*\j) {\includegraphics[width=3.5cm,trim=1.7cm 2cm 1.7cm 2cm,clip=true]{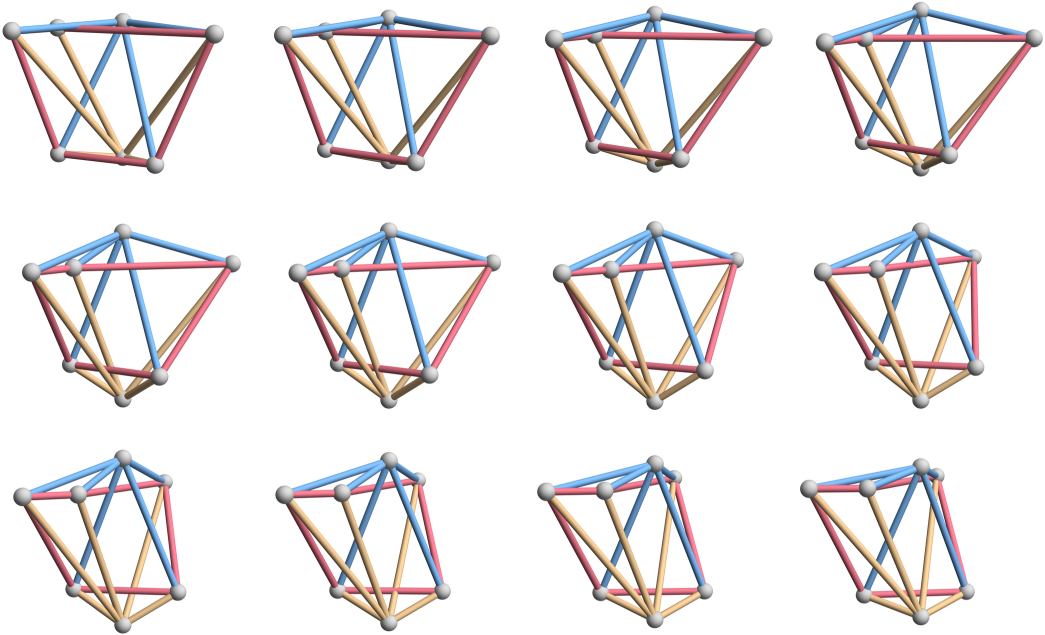}};
			}
		}
	\end{tikzpicture}
	\caption{Instances of the motion of the pentagonal bipyramid based on Nelson construction of two flexible octahedra with Type~I.}
	\label{fig:flexNelsonI}
\end{figure}

\section{A flexible embedded polyhedron with \texorpdfstring{$8$}{8} vertices}
\label{example}

A pleasant consequence of the Nelson construction presented in the proof
of \Cref{theorem:nelson_opposite} is the possibility to construct
a flexible embedded polyhedron with $8$ vertices.
As recalled in the introduction, Maksimov \cite{Maksimov2008} showed that
the only possibility for the combinatorial structure of
a flexible embedded polyhedron with fewer than $9$ vertices
(i.e., with fewer vertices than Steffen's polyhedron)
is the one of a pentagonal bipyramid with one of its faces subdivided into three
by adding a new vertex~$\New$ (see \Cref{figure:structure_8_vertices});
we call this the \emph{subdivided pentagonal bipyramid},
denoted by~$\Subdivided$.

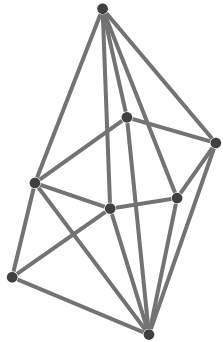
\begin{figure}[ht]
  \centering
  \begin{tikzpicture}
    % poles
    \node[vertex] (T) at (0.89, 4.56) {};
    \node[vertex] (B) at (1.50,0.24) {};
    % equator
    \node[vertex] (1) at (0.99,1.91) {};
    \node[vertex] (2) at (0,2.25) {};
    \node[vertex] (3) at (1.21,3.12) {};
    \node[vertex] (4) at (2.38,2.78) {};
    \node[vertex] (5) at (1.87,2.05) {};
    % new vertex
    \node[vertex] (N) at (-0.3,1) {};
    \draw[edge] (T)--(1);
    \draw[edge] (B)--(1);
    \draw[edge] (T)--(2);
    \draw[edge] (B)--(2);
    \draw[edge] (T)--(3);
    \draw[edge] (B)--(3);
    \draw[edge] (T)--(4);
    \draw[edge] (B)--(4);
    \draw[edge] (T)--(5);
    \draw[edge] (B)--(5);
    \draw[edge] (1)--(2);
    \draw[edge] (2)--(3);
    \draw[edge] (3)--(4);
    \draw[edge] (4)--(5);
    \draw[edge] (5)--(1);
    \draw[edge] (B)--(N);
    \draw[edge] (1)--(N);
    \draw[edge] (2)--(N);
  \end{tikzpicture}
  \caption{The graph of subdivided pentagonal bipyramid~$\Subdivided$.}
  \label{figure:structure_8_vertices}
\end{figure}

Notice that, since the new vertex~$\New$ in~$\Subdivided$ is connected to three vertices
of a pentagonal bipyramid~$\Penta$, then any realization of~$\Subdivided$
with non-coplanar faces admits exactly one other realization with the same edge lengths
and coinciding on the vertices of~$\Penta$.

So far, we considered $\Penta$ and $\Subdivided$ as graphs, since we only took care of vertices and edges.
We now want to think of them as polyhedra, namely considering their faces realized as triangles.
In particular, we introduce the notions to formalize the concept of self-intersections of faces.
To do so, we consider polyhedra as instances of the more general notion of realizations of abstract simplicial complexes.
We introduce for the latter the concept of embeddedness, which describes the requirement that faces do not intersect unless they share an edge.

\begin{definition}
 Let $V$ be a finite set.
 An \emph{abstract simplicial complex} is a subset~$\mathcal{K}$ of~$2^V$ that is closed under taking subsets.
 The elements of~$V$ are called the \emph{vertices} of~$\mathcal{K}$.
 The elements $\sigma \in \mathcal{K}$ are called \emph{simplices}.
 The simplices with two elements are called \emph{edges}.
\end{definition}

\begin{definition}
 The \emph{geometric realization}~$|\mathcal{K}|$ of a simplicial complex~$\mathcal{K}$ is the subset of~$\R^V$ given by the union of the convex hulls of the sets $\{e_v\}_{v \in \sigma}$ for $\sigma \in \mathcal{K}$, where the elements~$\{e_v\}_{v \in V}$ are the standard basis of~$\R^V$.
 A \emph{realization} of~$\mathcal{K}$ is a function $\rho \colon |\mathcal{K}| \rightarrow \R^3$ that is affine-linear on each $|\sigma|$ for $\sigma \in \mathcal{K}$.
 Notice that a realization~$\rho$ is completely determined by the images of the vertices of~$\mathcal{K}$.
\end{definition}

\begin{definition}
 A \emph{polyhedron} is a realization~$\rho$ of a simplicial complex~$\mathcal{K}$ such that $|\mathcal{K}|$ is homeomorphic to a $2$-sphere.
 We say that a realization $\rho$ yields an \emph{embedded polyhedron}
 or a \emph{polyhedron with no self-intersections}
 if the function~$\rho$ is injective, i.e., if $\rho$ is a homeomorphism onto its image.
\end{definition}
Notice that $\Bipyramid$ in \Cref{definition:flexible} can be replaced by any graph,
which allow us to state the following definition.
\begin{definition}
 A \emph{flex} of a polyhedron~$\rho \colon \mathcal{K} \rightarrow \R^3$
 is a continuous function $f \colon [0,1) \longrightarrow (\R^3)^V$ such that
 $f$ is a flex of~$\rho$ considered as a realization of the graph given
 by the vertices and edges of~$\mathcal{K}$.
 We say that a flex~$f$ yields a \emph{flex of an embedded polyhedron}
 if $f(t)$ is an embedded polyhedron for all $t \in [0,1)$.
\end{definition}

From now on, we think of~$\Penta$ as the abstract simplicial complex with simplices
\begin{gather*}
 \{\Top, 1, 2\}, \; \{\Top, 2, 3\}, \; \{\Top, 3, 4\}, \; \{\Top, 4, 5\}, \; \{\Top, 5, 1\}, \\
 \{\Bottom, 1, 2\}, \; \{\Bottom, 2, 3\}, \; \{\Bottom, 3, 4\}, \; \{\Bottom, 4, 5\}, \; \{\Bottom, 5, 1\},
\end{gather*}
and their subsets.
Similarly, the simplicial complex $\Subdivided$ is obtained by removing $\{\Bottom, 1, 2\}$ from $\Penta$ and by adding $\{\New, \Bottom, 1\}$, $\{\New, 1, 2\}$, and $\{\New, \Bottom, 2\}$, together with their subsets.

We know by \cite[Theorem~2]{Connelly1978} that all pentagonal flexible bipyramids must have self-intersections, namely that for each realization in a flex there are at least two triangular faces that intersect in more than their possible common edge.
Hence, the only possibility to obtain a flexible realization~$\Subdivided$
with no self-intersection is that we can produce a flex of a realization of~$\Penta$
for which all the possible intersections between faces always involve a common one;
if we are in this situation,
then we can hope that removing the latter face by subdividing it into three faces
yields a flex for~$\Subdivided$ that avoids the previous self-intersections
and does not create any other one, see \Cref{fig:avoidable_self_intersection}.

\begin{figure}[ht]
  \centering
  \includegraphics[width=5cm,clip=true,trim= 0.2cm 0.5cm 0.2cm 0.1cm]{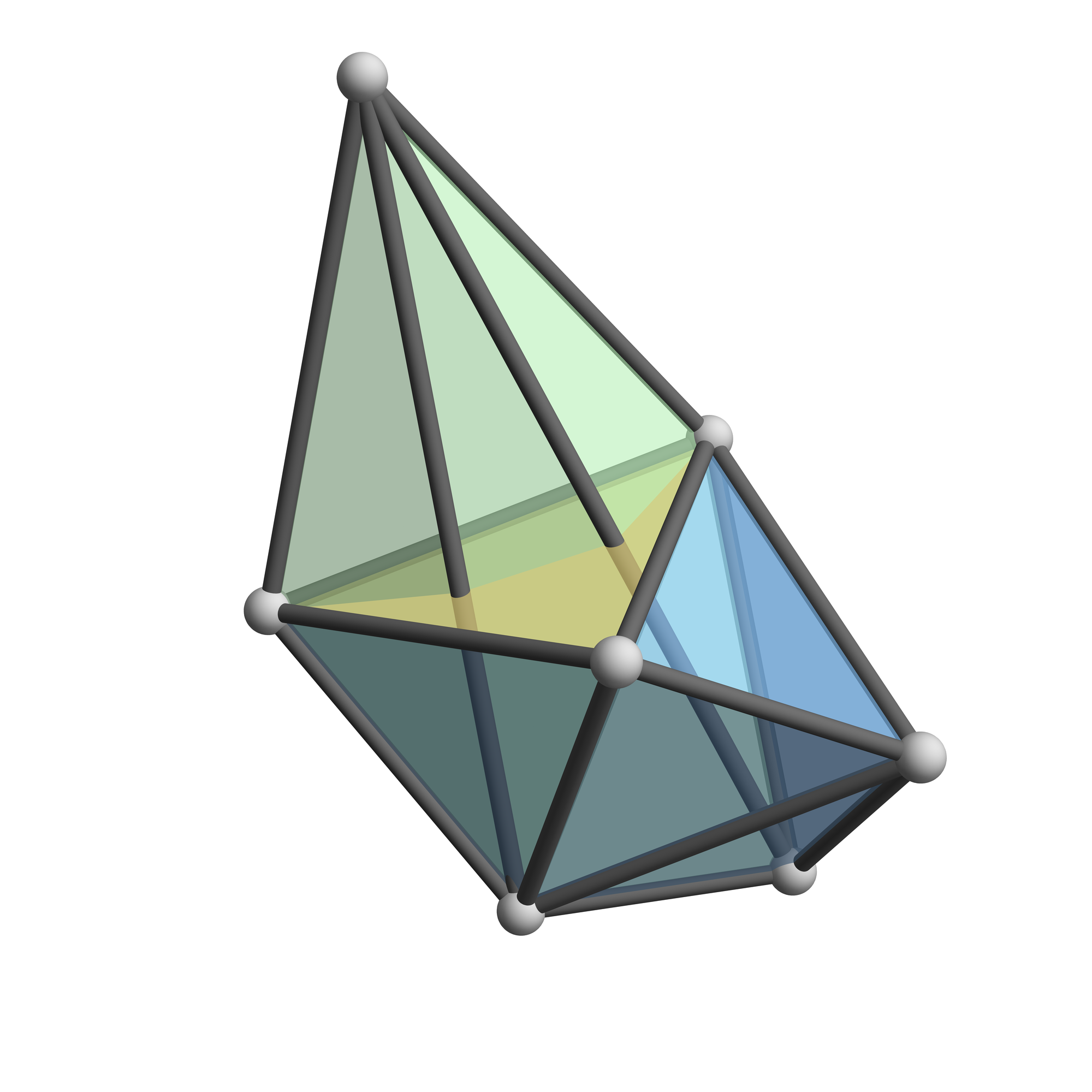}
  \caption{An illustration of a self-intersection that can be avoided by subdividing the yellow face
  into three new faces so that the new vertex is ``above'' the current top one.}
  \label{fig:avoidable_self_intersection}
\end{figure}

Let us revisit the Nelson construction discussed in the proof of \Cref{theorem:nelson_opposite}.
Let $p_0, \dotsc, p_5, p_{\Top}, p_{\Bottom}$ be the realizations of the vertices of~$\Oct_\kasperl$ and~$\Oct_\pezi$ according to the construction.
Using rotations and translation, we can suppose that the line $\ell_1$ is the $x$-axis and that the intersection point~$M$ is the origin~$(0,0,0)$.
Moreover, we can suppose that line $\ell_2$ is such that the reflection w.r.t.\ it
corresponds to multiplication by the matrix
\[
 \begin{pmatrix}
  c & s & 0\\
  s & -c & 0 \\
  0 & 0 & -1
 \end{pmatrix}\,,
\]
where $c = \frac{t^2 -1}{t^2 + 1}$ and $s = \frac{2t}{t^2 + 1}$ for some real number $t \in \R$.
In other words, $\ell_2$ is the line in the $xy$-plane forming an angle $\arccos(c)/2$ with the $x$-axis.
Moreover, we can write $p_{\Top} = (0, 0, a)$ and $p_{\Bottom} = (0, 0, -a)$ for some real number $a \in \R$.
If we write $p_3 = (x_3, y_3, z_3)$, then
\[
 p_1 = (x_3,\,-y_3,\,-z_3)
 \quad \text{and} \quad
 p_5 = (c \, x_3 + s \, y_3,\, -c \, y_3 + s \, x_3,\, -z_3) \,.
\]
Finally, we denote by~$\mu$ the real number such that $p_0 = \mu \, p_1 + (1 - \mu) \, p_5$.
Then
\begin{align*}
 p_2 &= \bigl(
  -(\mu (c - 1) - c) x_3 - (\mu s - s) y_3,\,
  (\mu s - s) x_3 - (\mu (c - 1) - c) y_3,\,
  z_3
 \bigr) \,,\\
 p_4 &= \bigl(
  -\mu s \, y_3 + (\mu (c - 1) + 1) x_3,\,
  \mu s \, x_3 + (\mu (c - 1) + 1) y_3,\,
  z_3
 \bigr) \,.
\end{align*}

In conclusion, we can write the coordinates of all the realizations of the vertices of~$\Oct_\kasperl$ and~$\Oct_\pezi$ as rational functions in the six parameters
\[
 (t, a, x_3, y_3, z_3, \mu) \,.
\]
While in general these parameters give polyhedra with many self intersections,
a vast but educated computer search finally allowed us to find a set of parameters for which there are only three intersections all of which contain one particular face.
There are of course infinitely many such parameters. We present here one set for which the intersections are visible.

\begin{lemma}
\label{lemma:nelson_few_intersections}
 The realization of~$\Penta$ given by the Nelson construction with the choice of parameters
 \[
  (t, a, x_3, y_3, z_3, \mu) = (-5/8, \, 1, \, 15/7, \, 11/4, \, 5/2, \, 2/7)
 \]
 has only three self-intersections, namely, the ones between the faces
 \[
  \{\Bottom, 1, 2\} \text{ and } \{\Top, 4, 5\} \,,
  \quad
  \{\Bottom, 1, 2\} \text{ and } \{\Top, 5, 1\} \,,
  \quad
  \{\Bottom, 1, 2\} \text{ and } \{\Bottom, 4, 5\} \,.
 \]
\end{lemma}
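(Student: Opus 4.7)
The statement is a verification lemma for a specific choice of parameters, so the proof is essentially a finite, explicit computation rather than a conceptual argument. The plan is to substitute the rational values $(t,a,x_3,y_3,z_3,\mu) = (-5/8, 1, 15/7, 11/4, 5/2, 2/7)$ into the formulas preceding the lemma, obtaining exact rational coordinates for the seven vertices of $\Penta$ (the auxiliary vertex $0$ of the Nelson construction plays no role, since it does not appear in $\Penta$). The formulas only involve the quantities $c = (t^2-1)/(t^2+1) = -39/89$ and $s = 2t/(t^2+1) = -80/89$, together with $p_\Top = (0,0,1)$, $p_\Bottom = (0,0,-1)$, and the explicit values of $p_3$; the remaining $p_1$, $p_2$, $p_4$, $p_5$ are obtained by the prescribed linear expressions, and all coordinates end up as fractions with small denominators.

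Once the coordinates are in hand, the task reduces to checking, for each of the $\binom{10}{2}=45$ unordered pairs of triangular faces of $\Penta$, whether the two triangles intersect in more than the simplex they share (which is empty, a common vertex, or a common edge, depending on the pair). This is a routine computational-geometry test: for two triangles $\tau_1, \tau_2$ in $\R^3$, intersect each edge of $\tau_1$ with the affine plane of $\tau_2$ by solving a linear equation in one parameter, check whether the parameter lies in $[0,1]$, and if so substitute into the barycentric coordinate test for $\tau_2$; then repeat with the roles exchanged. When $\tau_1$ and $\tau_2$ are coplanar, a separate $2$D test is needed, but the choice of parameters avoids this degeneracy, as one verifies by checking that the four vertices of any pair of faces are affinely independent in $\R^3$.

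Because all coordinates are exact rationals, every intersection test amounts to evaluating the sign of a rational number, so there are no rounding issues; the bookkeeping for the $45$ cases is carried out symbolically in the supporting code accompanying the paper. The outcome is that exactly three pairs of faces meet in more than their (possibly empty) common simplex, and each of these three pairs contains the face $\{\Bottom,1,2\}$, namely the pairs listed in the statement. All remaining $42$ pairs either share a simplex and meet only in it, or are entirely disjoint.

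The main obstacle is not conceptual but purely the tedium of enumerating and verifying the $45$ face pairs. The only subtle point is to rule out coplanar degeneracies, but this is immediate from the numerical values (no four of the seven vertices are coplanar in this realization). The computational nature of the lemma is precisely why it is appealing for computer verification, and the role of the lemma in the paper is to produce a \emph{concrete} realization whose only failures of embedding all concentrate on a single face, so that the subsequent subdivision by~$\New$ can remove them.
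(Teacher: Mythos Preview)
Your proposal is correct and matches the paper's own proof: both reduce the lemma to a direct symbolic computation that enumerates all pairs of triangular faces and tests for intersection using exact rational arithmetic, with the paper deferring the details to its supporting code. Your write-up is more explicit about the mechanics of the triangle--triangle test (edge--plane intersection plus barycentric check, and handling of the coplanar case), but the underlying argument is identical.
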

\begin{proof}
 The proof follows from a direct symbolic computation,
 checking all possible intersections between pairs of faces of~$\Penta$.
 We provide such a symbolic check in \cite[Section~2.2]{supportingCode}.
 The intersections are displayed in \Cref{fig:threeIntersections}.
\end{proof}

\begin{figure}[ht]
	\centering
	\includegraphics[trim=3cm 0cm 2cm 0cm,clip=true,scale=0.5]{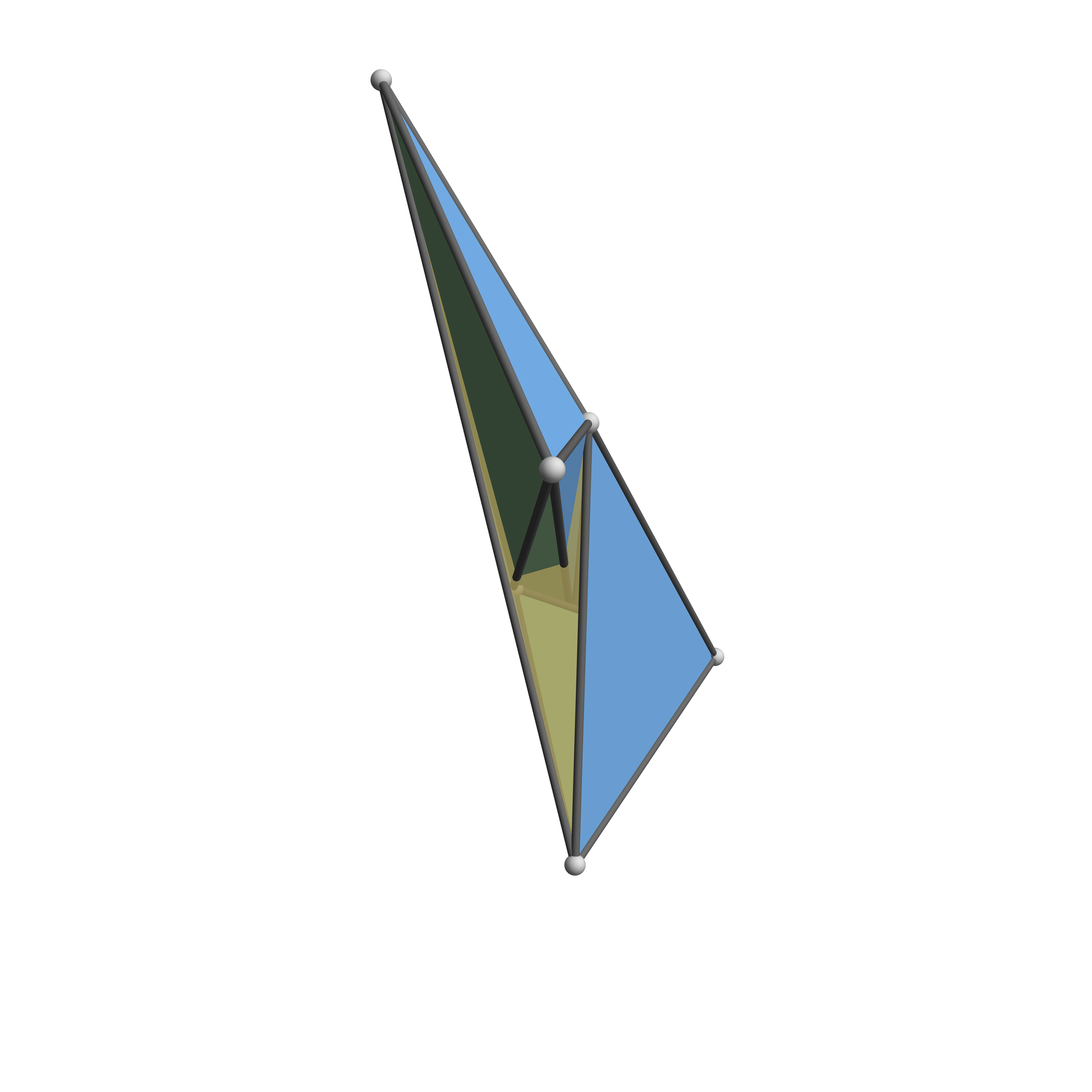}
	\includegraphics[trim=0cm 0cm 1cm 0cm,clip=true,scale=0.5]{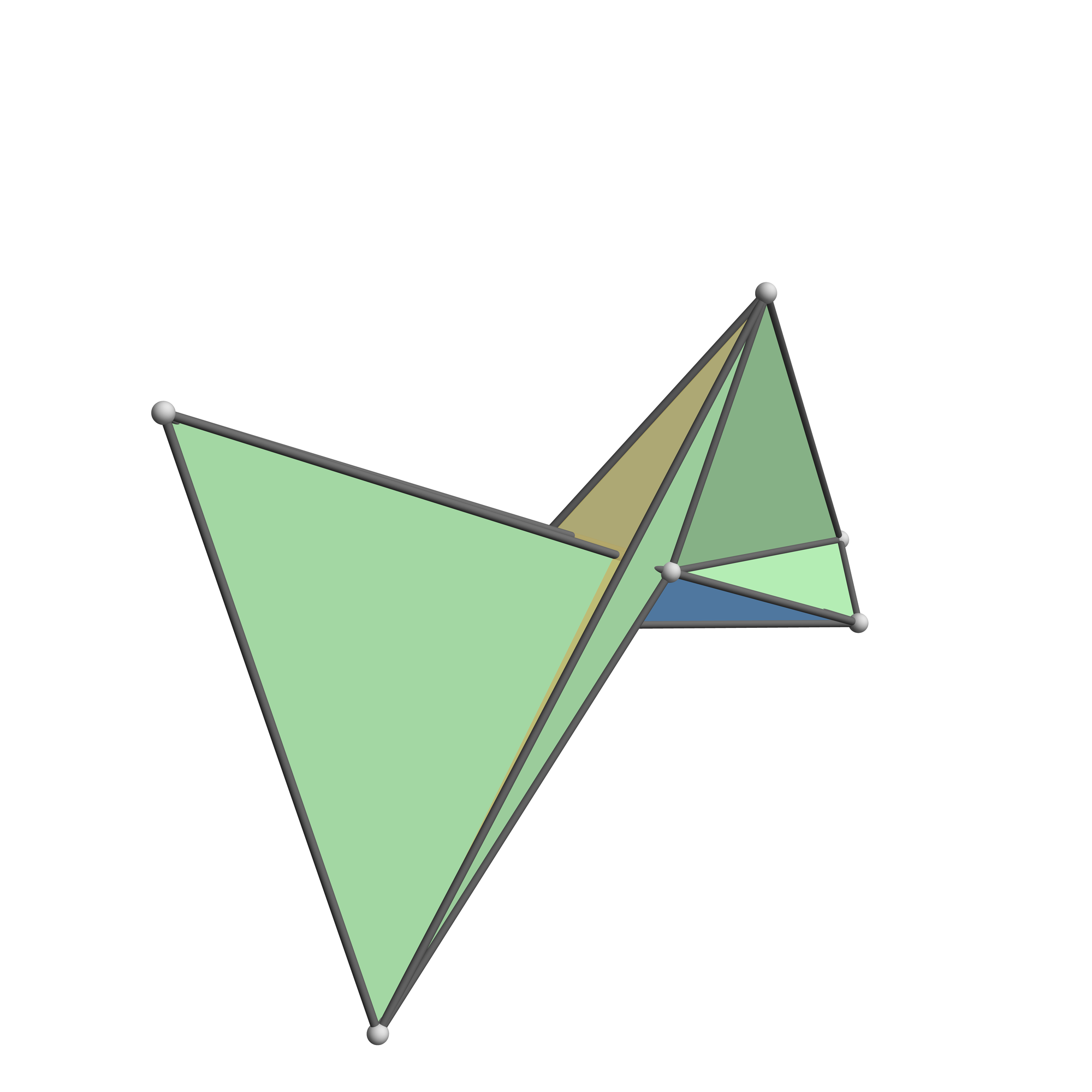}
	\includegraphics[scale=0.5]{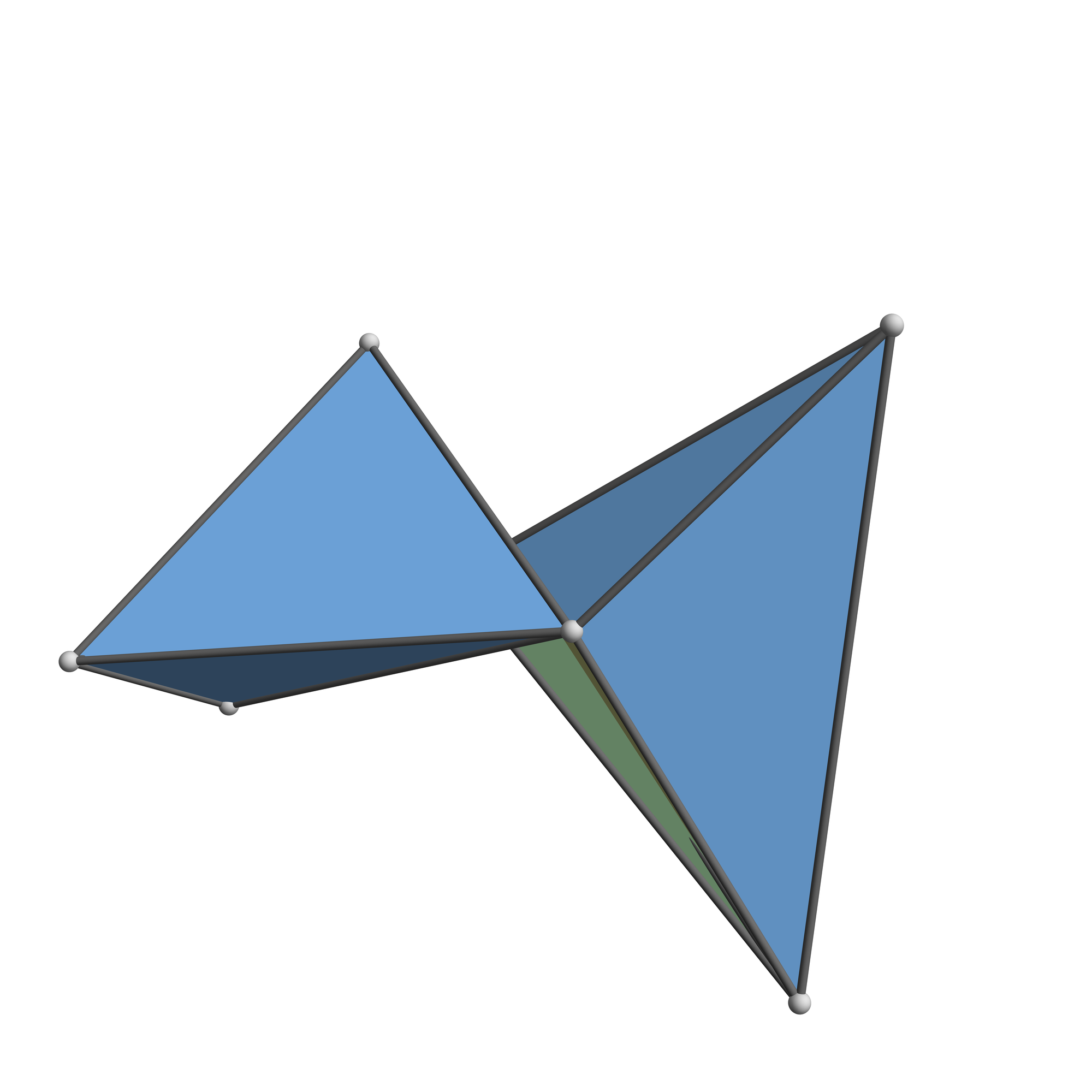}
	\includegraphics[trim=2cm 0cm 3cm 0cm,clip=true,scale=0.5]{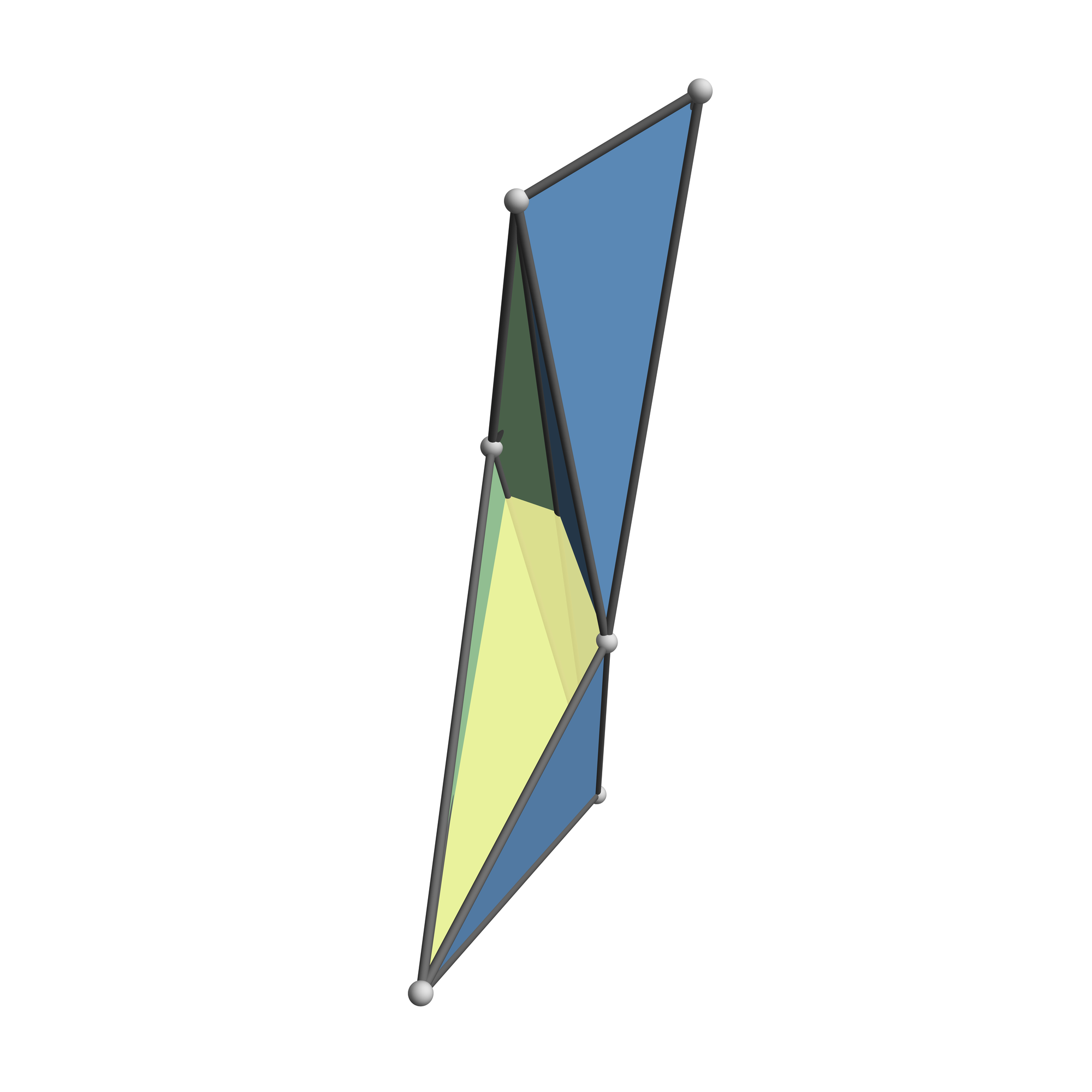}
	\caption{Four views of the pentagonal bipyramid showing the three self intersections.}
	\label{fig:threeIntersections}
\end{figure}

\begin{theorem}
\label{theorem:existence_flexible}
 The realization of~$\Subdivided$ obtained by mapping the vertices of~$\Penta$
 as described in \Cref{lemma:nelson_few_intersections} and the vertex~$\New$
 to $p_1 - \frac{9}{10}(p_2-p_1) + 3(p_\Bottom-p_1) + \frac{3}{5}(p_2-p_1)\times(p_\Bottom-p_1)$
 has a flex with no self-intersections,
 hence yields an example of a flexible embedded polyhedron with $8$ vertices.
\end{theorem}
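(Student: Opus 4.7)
My plan is to inherit the flex from the underlying pentagonal bipyramid and to show that the newly introduced vertex~$\New$ does not create new self-intersections, at least on some initial interval which can be reparametrised to $[0,1)$. By \Cref{theorem:nelson_opposite}, the realization of~$\Penta$ in \Cref{lemma:nelson_few_intersections} extends to a flex; along this flex the three vertices $\Bottom$, $1$, $2$ trace a family of triangles that remain non-degenerate near $t=0$ by continuity. Since $\New$ is joined by edges only to $\Bottom$, $1$, $2$, its position at each~$t$ is determined up to reflection across the plane of this triangle by the three edge lengths read off from the prescribed initial position; picking the branch continuously starting from that position yields a flex $f$ of~$\Subdivided$.

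Next, I would reduce the embeddedness of the entire flex to the embeddedness at $t=0$. Embeddedness of a polyhedron with fixed combinatorial structure is an open condition on the space of realizations, because all pairwise face--face intersection tests are continuous in the vertex coordinates and transverse disjointness is stable under small perturbation. Hence, if $f(0)$ has no self-intersections, then $f(t)$ is embedded for all $t$ in some interval $[0,\varepsilon)$, and after a change of parameter one obtains a flex of an embedded polyhedron on $[0,1)$ in the sense of the paper's definition. The problem thus reduces to verifying that~$f(0)$ itself is embedded.

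For the verification at $t=0$ I would use \Cref{lemma:nelson_few_intersections}, which says that in the underlying $\Penta$-part of~$f(0)$ the only self-intersections involve the face $\{\Bottom,1,2\}$. In $\Subdivided$ this face is replaced by the three cap-faces $\{\New,\Bottom,1\}$, $\{\New,1,2\}$, $\{\New,\Bottom,2\}$, so with the explicit rational coordinates for~$\New$ given by $p_1 - \tfrac{9}{10}(p_2-p_1) + 3(p_\Bottom-p_1) + \tfrac{3}{5}(p_2-p_1)\times(p_\Bottom-p_1)$, I would symbolically check that these three new faces (i) meet each other only along their common edges, (ii) are disjoint from the three previously offending faces $\{\Top,4,5\}$, $\{\Top,5,1\}$, $\{\Bottom,4,5\}$, and (iii) introduce no new intersection with any of the remaining faces of~$\Subdivided$. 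This last step is the main obstacle: conceptually $\New$ must be pushed to the ``outside'' of the triangle $\{\Bottom,1,2\}$, on the opposite side from the bulk of the bipyramid, so that the three cap-faces lie on the correct side of each potentially intersecting face. Finding such a position is the delicate part and was presumably discovered by the earlier computer search; once it is available, (i)--(iii) reduce to a finite list of triangle--triangle disjointness checks on explicit rational data and can be performed mechanically.
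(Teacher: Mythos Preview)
Your overall strategy---inherit the flex of~$\Penta$, attach~$\New$ continuously over the rigid triangle $\{\Bottom,1,2\}$, then use that embeddedness is an open condition to reduce to a finite symbolic check at the initial pose---is sound and matches the paper's approach for the embeddedness part. In fact you are more explicit than the paper about invoking the open-condition argument, which is a plus.

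There is, however, a genuine gap in the first step. You write ``By \Cref{theorem:nelson_opposite}, the realization of~$\Penta$ in \Cref{lemma:nelson_few_intersections} extends to a flex.'' But \Cref{theorem:nelson_opposite} does not say this. Its proof shows, via Sard's theorem, that the map $f\colon\mathcal{U}\to\R^7$ from the $12$-dimensional space of fitting pairs to edge lengths has image of dimension~$\le 5$, so that the fiber over a \emph{regular} value is a submanifold of dimension~$\ge 7$ and hence yields a flex after quotienting by isometries. Sard's theorem gives no control over critical values, and there is no reason a priori why the very specific rational parameters of \Cref{lemma:nelson_few_intersections} land at a regular value. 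Over~$\R$ one cannot fall back on semicontinuity of fiber dimension either: the real locus of a fiber can drop below the expected dimension, so even the existence of a real curve through the chosen point is not automatic.

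The paper confronts exactly this issue and resolves it differently: rather than appealing to \Cref{theorem:nelson_opposite}, it writes down an explicit one-parameter family $a\mapsto (t,a,x_3,y_3,z_3,\mu)$ inside the six-dimensional construction space, with $a=1$ recovering the realization of \Cref{lemma:nelson_few_intersections}, and checks symbolically that all edge lengths stay constant along it. This directly certifies that the \emph{specific} realization admits a flex (and, since the curve lives in the fitting-pair parameter space, that the Nelson compatibility is maintained throughout). To repair your argument you would need either to verify that the Jacobian of~$f$ has rank~$5$ at the chosen parameters, or---as the paper does---to exhibit the flex curve explicitly.
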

\begin{proof}
 Let $\rho$ be the realization of~$\Subdivided$ described in the statement.
 As in \Cref{lemma:nelson_few_intersections}, the fact that $\rho$ is free from self-intersections can be checked via symbolic computation.
 We provide such a symbolic check in \cite[Section~2.3]{supportingCode}.

 We consider the locus~$\mathcal{X}$ of parameters $(t, a, x_3, y_3, z_3, \mu)\in\R^6$
 whose corresponding realizations induce the same edge lengths as $\rho$.
 We prove that $\rho$ admits a flex by exhibiting a parametrization of a real curve in~$\mathcal{X}$
 passing through the parameters specifying $\rho$, i.e., those in \Cref{lemma:nelson_few_intersections}.
 Hereafter, we report such a parametrization in terms of the variable parameter~$a$;
 the value corresponding to~$\rho$ is $1$:
 \begin{align*}
  t &= -\frac{\sqrt{9529} \sqrt{A_2}}{76232 \, a} \,, & z_3 &= \frac{5}{2 a} \,, \\
  x_3 &= \frac{\sqrt{89}}{623} \,  \sqrt{\frac{C_{1} - 20 \, \sqrt{A_{1} A_2}}{B}} \,, &  m &= \frac2{7} \,, \\
  y_3 &= \frac{\sqrt{89}}{2492 \, a} \sqrt{\frac{320 a^2 \, \sqrt{A_{1} A_2} - C_2}{B}} \,,
 \end{align*}
 where
 \begin{align*}
  A_1 &= -2029832431225781 \, a^4 + 3583108625879406 \, a^2 - 1376807431326600 \,, \\
  A_2 &= -69776 \, a^4 + 744101 \, a^2 - 436100 \,,\\
  B &= -3419024 \, a^4 + 41949653 \, a^2 - 21368900 \,,\\
  C_1 &= 14910363664 \, a^{6} - 192762292317 \, a^4 + 1211236673778 \, a^2 - 560045725400 \,, \\
  C_2 &= 4472093788624 \, a^{6} - 40400312279273 \, a^4 + 38266083804200 \, a^2 - 9318977290000 \,.
 \end{align*}
 We provide the derivation of such a parametrization starting from the defining equations of~$\mathcal{X}$
 together with an independent check of its correctness in \cite[Sections~2.4--2.6]{supportingCode}, where we also report an alternative proof for the flexibility.
\end{proof}

A final note: the example of flexible embedded polyhedron we have constructed exhibits an extremely limited mobility; it is possible that the methods of \cite{Lijingjiao2015} could be applied to obtain an instance with a wider range of mobility.

\section*{Acknowledgments}

We thank Zijia Li for suggesting us the use of motion polynomials for the construction we provide in \Cref{theorem:nelson_Z2}.
Jan Legerský is grateful for the discussions with Robert Connelly
during the \emph{Fields Institute Focus Program on Geometric Constraint Systems} in Toronto in 2023.
Part of this project was accomplished during the special semester on \emph{Rigidity and Flexibility} at the \emph{Johann Radon Institute for Computational and Applied Mathematics (RICAM) of the Austrian Academy of Sciences} in Linz in 2024.

\appendix
\section{Equations for \texorpdfstring{$\OCM$}{the oriented Cayley-Menger variety}}
\label{equations}

\begin{lemma}
\label{lemma:equation_oriented_volume}
 The following equations hold for $\OCM$. Form the matrix
  \[
  M_{i, i+1, \Top, \Bottom} =
  \left(
  \begin{array}{rrrrr}
    0 & 1 & 1 & 1 & 1 \\
    1 & 0 & d_{i \, i+1} & d_{i \, \Top} & d_{i \, \Bottom} \\
    1 & d_{i \, i+1} & 0 & d_{i+1 \, \Top} & d_{i+1 \, \Bottom} \\
    1 & d_{i \, \Top} & d_{i+1 \, \Top} & 0 & d_{\Top \, \Bottom} \\
    1 & d_{i \, \Bottom} & d_{i+1 \, \Bottom} & d_{\Top \, \Bottom} & 0 \\
  \end{array}
  \right)\,.
  \]
 Then
 \[
   288 \, s_{i,i+1}^2 - \det(M_{i, i+1, \Top, \Bottom}) = 0 \,.
 \]
\end{lemma}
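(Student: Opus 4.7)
This statement is the classical Cayley--Menger determinant identity applied to the tetrahedron with vertices $(i, i+1, \Top, \Bottom)$. Since $\OCM$ is defined as the Zariski closure of the image of the map sending a realization to its squared distances and oriented volumes, and both sides of the claimed equation are polynomial expressions in these quantities, it suffices to verify the identity pointwise on arbitrary quadruples $a_1, a_2, a_3, a_4 \in \R^3$: if it holds there, it descends to a polynomial relation on $\OCM$.

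The plan has two ingredients. First, by the definition of oriented volume,
\[
 6 \, s_{i,i+1} \;=\; \det\bigl(a_1 - a_4 \,\big|\, a_2 - a_4 \,\big|\, a_3 - a_4\bigr),
\]
where $a_1, a_2, a_3, a_4$ denote the realizations of $i, i+1, \Top, \Bottom$ respectively. Squaring and applying $\det(X)\det(X^\top) = \det(XX^\top)$ yields $36 \, s_{i,i+1}^2 = \det(G)$, where $G$ is the $3 \times 3$ Gram matrix with entries $G_{jk} = (a_j - a_4)\cdot(a_k - a_4)$. The polarization identity $\|u-v\|^2 = \|u\|^2 + \|v\|^2 - 2\,u\cdot v$ then rewrites these entries as
\[
 G_{jk} \;=\; \tfrac{1}{2}\bigl(d_{j\,4} + d_{k\,4} - d_{jk}\bigr).
\]

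Second, I reduce $M_{i,i+1,\Top,\Bottom}$ to reveal this Gram matrix. Subtracting the last row from rows $2,3,4$ and the last column from columns $2,3,4$ leaves the determinant unchanged but turns the $(j,k)$-entry (for $j,k \in \{2,3,4\}$) of the new upper $3\times 3$ block into $-(d_{j\,4} + d_{k\,4} - d_{jk}) = -2\,G_{jk}$, while zeroing out the corresponding entries in the last row and last column. The first row and first column remain essentially unchanged, and two successive Laplace expansions (first along the fifth row, then along the first) reduce the computation to $\det(-2G) = -8\det(G)$, multiplied by a sign coming from the expansion. Tracking these signs yields $\det(M_{i,i+1,\Top,\Bottom}) = 8\det(G)$, and combining this with $\det(G) = 36\,s_{i,i+1}^2$ produces the claimed identity $288 \, s_{i,i+1}^2 = \det(M_{i,i+1,\Top,\Bottom})$.

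The main obstacle is the careful bookkeeping of signs and scalar factors in the row/column reduction and Laplace expansion, which is notoriously error-prone. A fully mechanical alternative, should the derivation above prove awkward to present cleanly, is to expand both $288\,s_{i,i+1}^2$ and $\det(M_{i,i+1,\Top,\Bottom})$ as polynomials in the twelve coordinates of $a_1, a_2, a_3, a_4$ and verify their equality via a one-line computer algebra check.
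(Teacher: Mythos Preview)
Your proposal is correct. The paper itself does not give a proof at all: it simply states that this is ``a well known fact in distance geometry'' and cites a reference. Your argument is the standard derivation of the Cayley--Menger determinant via the Gram matrix, and your sign bookkeeping checks out: after the two row/column subtractions you indicate, Laplace expansion along the first row (only the $(1,5)$ entry survives, cofactor sign $+1$) and then along the first column of the resulting $4\times 4$ minor (only the bottom-left entry survives, cofactor sign $-1$) leaves $-\det(-2G)=8\det(G)=8\cdot 36\,s_{i,i+1}^2=288\,s_{i,i+1}^2$, exactly as claimed. So you are supplying the details the paper omits, rather than taking a different route.
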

\begin{proof}
 This is a well known fact in distance geometry (see for example \cite[Section~4.1.2]{Liberti2017}).
\end{proof}

The following lemma has content very similar to \cite{Sabitov};
we state and prove it because for our purposes we need a precise description of the equation satisfied by the distances and the oriented volumes.

\begin{lemma}
\label{lemma:equation_distance_volume}
 The following equations hold for $\OCM$:
 \[
  a_k d_{ij} + b_{ijk} - 288 \varsigma_{ik} \varsigma_{kj} = 0 \,,
 \]
 where
 \[
  a_k = 2(d_{k\Top} d_{k\Bottom} + d_{k\Top} d_{\Top\Bottom} + d_{k\Bottom} d_{\Top\Bottom}) - (d_{k\Top}^2 + d_{k\Bottom}^2 + d_{\Top\Bottom}^2)
 \]
 \begin{align*}
  b_{ijk} =
   &\bigl(d_{ik} (d_{j\Bottom}-d_{j\Top})+d_{kj}
   (d_{i\Bottom}-d_{i\Top})+d_{k\Bottom}
   (d_{i\Top}+d_{j\Top})-d_{k\Top} (d_{i\Bottom}+d_{j\Bottom})\bigr)
   (d_{k\Bottom}-d_{k\Top})\\
   &-d_{\Top\Bottom} \bigl(d_{i\Top}
   (d_{k\Bottom}+d_{kj})+d_{i\Bottom}
   (d_{k\Top}+d_{kj})+d_{j\Top}
   (d_{ik}+d_{k\Bottom})+d_{j\Bottom} (d_{ik}+d_{k\Top})\bigr)\\
   &-(d_{k\Bottom}+d_{k\Top}-d_{\Top\Bottom})
   \bigl(d_{\Top\Bottom} (d_{ik}+d_{kj})+(d_{i\Top}
   d_{j\Bottom}+d_{j\Top} d_{i\Bottom})\bigr)\\
   &+2 (d_{i\Top} d_{k\Bottom}
   d_{j\Top}+d_{i\Bottom} d_{k\Top} d_{j\Bottom}+d_{k\Bottom}
   d_{k\Top} d_{\Top\Bottom} + d_{ik} d_{kj} d_{\Top\Bottom})\,.
 \end{align*}
 Notice that both $a_{k}$ and $b_{ijk}$ only depend on the $d$-coordinates; in particular, $a_k$ depends on the variables $d_{k\Top}$,$d_{k\Bottom}$, and $d_{\Top\Bottom}$, while $b_{ijk}$ depends on the variables $d_{m\Top}$, $d_{m\Bottom}$ for $m \in \{i,j,k\}$ and $d_{ik}$, $d_{kj}$, and $d_{\Top\Bottom}$.
\end{lemma}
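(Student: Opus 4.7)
The plan is to reduce the identity to a $3\times 3$ determinant computation via Cauchy--Binet and polarization, and then match coefficients.

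Both sides of the claimed equality are polynomial expressions in the $d$- and $\varsigma$-coordinates, and the product $\varsigma_{ik}\varsigma_{kj}$ is invariant under every isometry of $\R^3$ (the two factors change sign simultaneously under orientation-reversing isometries). It therefore suffices to prove the identity for a single realization, and I would choose coordinates so that $\rho(\Bottom)$ is the origin. Writing $p$ for $\rho(p)$, we then have $6\,\varsigma_{pq} = \det[p\ q\ \Top]$, the determinant of the $3\times 3$ matrix whose columns are the column vectors $p$, $q$, $\Top$. Applying Cauchy--Binet to $M_1 = [i\ k\ \Top]$ and $M_2 = [k\ j\ \Top]$ yields $36\,\varsigma_{ik}\varsigma_{kj} = \det(M_1)\det(M_2) = \det(M_1^{\mathsf T} M_2)$, a $3\times 3$ determinant whose entries are the inner products $p\cdot q$ with $p\in\{i,k,\Top\}$ and $q\in\{k,j,\Top\}$.

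By polarization, and because $\rho(\Bottom)=0$, each such inner product equals $p\cdot q = \tfrac{1}{2}(d_{p\Bottom}+d_{q\Bottom}-d_{pq})$ (with the convention $d_{pp}=0$, so that $k\cdot k = d_{k\Bottom}$ and $\Top\cdot\Top = d_{\Top\Bottom}$). Factoring out $\tfrac12$ from each of the three rows turns $36\,\varsigma_{ik}\varsigma_{kj} = \det(M_1^{\mathsf T}M_2)$ into $288\,\varsigma_{ik}\varsigma_{kj} = \det(G)$, where $G$ is the $3\times 3$ matrix with entries $G_{pq} = d_{p\Bottom}+d_{q\Bottom}-d_{pq}$. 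This accounts precisely for the numerical factor $288$ in the statement.

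To isolate the dependence on $d_{ij}$, note that $d_{ij}$ appears only in the $(1,2)$ entry $G_{ij} = d_{i\Bottom}+d_{j\Bottom}-d_{ij}$. Expanding $\det(G)$ along the first row, the coefficient of $d_{ij}$ is $+1$ times the $2\times 2$ minor of rows $2,3$ and columns $1,3$, namely $4\,d_{k\Bottom}d_{\Top\Bottom} - (d_{k\Bottom}+d_{\Top\Bottom}-d_{k\Top})^2$, which expands to $2(d_{k\Top}d_{k\Bottom}+d_{k\Top}d_{\Top\Bottom}+d_{k\Bottom}d_{\Top\Bottom}) - (d_{k\Top}^2+d_{k\Bottom}^2+d_{\Top\Bottom}^2) = a_k$. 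Consequently the identity reduces to the claim that the part of $\det(G)$ independent of $d_{ij}$ equals $b_{ijk}$.

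The main obstacle is this last equality: it is a universal identity about the Gram-type matrix $G$ that I would verify by straightforward symbolic expansion, collecting monomials and comparing with the displayed formula for $b_{ijk}$. Because the computation is purely polynomial and lengthy, I would record the check in the supporting computer algebra code alongside the existing scripts in \cite[Section~2]{supportingCode}. Once the match is confirmed, the identity holds on every realization, hence on the Zariski closure of the image of realizations, which by \Cref{definition:oriented_cayley_menger} is $\OCM$; the dependence described for $a_k$ and $b_{ijk}$ on the $d$-variables is then read off from the construction.
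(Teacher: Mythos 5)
Your argument is correct, and it takes a genuinely different route from the paper's. The paper starts from the identically vanishing $6\times 6$ Cayley--Menger determinant of the five points $i,k,j,\Top,\Bottom$, notes that it is quadratic in $d_{ij}$, and ``solves'' for $d_{ij}$; this produces a square root that must be identified with $\pm\,288\,\varsigma_{ik}\varsigma_{kj}$, and the sign is pinned down only by substituting symbolic point coordinates in the supporting notebook. You instead compute $288\,\varsigma_{ik}\varsigma_{kj}$ directly as $\det(G)$ for the $3\times 3$ Gram-type matrix with entries $G_{pq}=d_{p\Bottom}+d_{q\Bottom}-d_{pq}$ (rows $p\in\{i,k,\Top\}$, columns $q\in\{k,j,\Top\}$), after normalizing $\rho(\Bottom)$ to the origin --- legitimate because the $d$-coordinates and the product $\varsigma_{ik}\varsigma_{kj}$ are invariant under all isometries, including orientation-reversing ones. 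This buys two things: the resulting expression is visibly \emph{linear} in $d_{ij}$, with coefficient equal to the $(1,2)$-minor, which you correctly expand to $a_k$; and there is no sign ambiguity to resolve, since the factor $288=36\cdot 2^3$ is accounted for exactly. What each approach still delegates to computer algebra is comparable in weight: the paper must confirm the sign and the explicit form of the solved equation, while you must confirm that the $d_{ij}$-free part of $\det(G)$ expands to $b_{ijk}$. Both arguments then conclude identically, by observing that an identity valid on the image of the realization map extends to its Zariski closure $\OCM$.
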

\begin{proof}
 It is well-known (see for example \cite[Theorem 4.2]{Liberti2016}) that the following determinant is identically zero:
 \[
  \det
  \left(
  \begin{array}{cccccc}
   0 & 1 & 1 & 1 & 1 & 1 \\
   1 & 0 & d_{ik} & d_{ij} & d_{i\Top} & d_{i\Bottom} \\
   1 & d_{ik} & 0 & d_{kj} & d_{k\Top} & d_{k\Bottom} \\
   1 & d_{ij} & d_{kj} & 0 & d_{j\Top} & d_{j\Bottom} \\
   1 & d_{i\Top} & d_{k\Top} & d_{j\Top} & 0 & d_{\Top\Bottom} \\
   1 & d_{i\Bottom} & d_{k\Bottom} & d_{j\Bottom} & d_{\Top\Bottom} & 0 \\
  \end{array}
  \right) \,.
 \]
 This determinant is a polynomial in $d_{ij}$ of degree $k$.
 Solving the equation with respect to $d_{ij}$ yields the desired equation,
 with an ambiguity regarding the sign of
 the monomial~$288 \varsigma_{i,k} \varsigma_{k,j}$.
 To solve the ambiguity, one can plug in the parametrization of $\OCM$
 with symbolic coordinates for the points in $(\R^3)^{V_{\Bipyramid}}$.
 See notebook \cite[Section~3.1]{supportingCode}.
\end{proof}

\begin{definition}
 We denote by $W_{i, j, k, \ell}$ the squared volume of the tetrahedron $(i, j, k, \ell)$.
 It follows that $W_{i, j, k, \ell}$ is a polynomial in $\{ d_{uv} \}_{u,v \in \{i, j, k, \ell\}}$.
\end{definition}

\begin{lemma}
\label{lemma:oriented_volume_diagonal}
 For every $i, j, k \in \{1, \dotsc, n\}$, we have
 \[
   a_{i,j,k} \, \varsigma_{i, j} + b_{i,j,k} = 0 \,,
 \]
 where
 \begin{align*}
  a_{i,j,k} &= -4 (\varsigma_{i, k} + \varsigma_{k, j}) \bigl((\varsigma_{i, k} + \varsigma_{k, j})^2 - W_{i, k, j, \Bottom} - W_{i, k, j, \Top} + W_{i, j, \Top, \Bottom} \bigr) \, ,  \\
  b_{i,j,k} &= (\varsigma_{i, k} + \varsigma_{k, j})^2 \bigl( (\varsigma_{i, k} + \varsigma_{k, j})^2 - 2W_{i, k, j, \Bottom} - 2W_{i, k, j, \Top} + 6W_{i, j, \Top, \Bottom} \bigr) \\
  &\phantom{=} +
  (W_{i, k, j, \Bottom} + W_{i, k, j, \Top} - W_{i, j, \Top, \Bottom})^2 - 4W_{i, k, j, \Bottom} W_{i, k, j, \Top} \, .
 \end{align*}
\end{lemma}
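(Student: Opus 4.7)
The plan is to work in coordinates aligned with the segment $\Top\Bottom$: place $\Bottom$ at the origin and $\Top = (0,0,h)$, so that $h^2 = d_{\Top\Bottom}$, and let $\alpha, \beta, \gamma \in \R^2$ denote the $xy$-projections of $i, j, k$. Writing $\alpha \times \beta$ for the $2$D cross product, one computes directly $6\varsigma_{ij} = h(\alpha \times \beta)$, and likewise for $\varsigma_{ik}$ and $\varsigma_{kj}$. The identity $(\alpha - \gamma) \times (\beta - \gamma) = \alpha \times \beta - \alpha \times \gamma - \gamma \times \beta$ therefore yields
\[
 6(\varsigma_{ij} - \varsigma_{ik} - \varsigma_{kj}) = h \cdot Q \,,
\]
where $Q := (\alpha - \gamma) \times (\beta - \gamma)$ is twice the signed area of the projected triangle $(\gamma, \alpha, \beta)$. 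Setting $u := \varsigma_{ik} + \varsigma_{kj}$ and squaring gives $36(\varsigma_{ij} - u)^2 = d_{\Top\Bottom} \cdot Q^2$.

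The next step is to express $d_{\Top\Bottom} \cdot Q^2$ entirely in terms of squared tetrahedral volumes. Observe that $Q$ is the $z$-component of the $3$D vector $\vec{v} := (i - k) \times (j - k)$, whose squared norm equals $4 A^2$ with $A$ the area of triangle $(i, k, j)$. Let $\sigma_\Top, \sigma_\Bottom$ denote the signed volumes of the tetrahedra $(k, i, j, \Top)$ and $(k, i, j, \Bottom)$, so that $\sigma_\Top^2 = W_{i,k,j,\Top}$ and $\sigma_\Bottom^2 = W_{i,k,j,\Bottom}$. Using $(\Top - k) \cdot \vec{v} = 6\sigma_\Top$, $(\Bottom - k) \cdot \vec{v} = 6\sigma_\Bottom$, and $(\Top - \Bottom) \cdot \vec{v} = hQ$, a short triple-product computation yields
\[
 d_{\Top\Bottom} \cdot Q^2 = 36\bigl(W_{i,k,j,\Top} + W_{i,k,j,\Bottom} - 2\sigma_\Top \sigma_\Bottom\bigr) \,.
\]

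The final step is a double squaring to eliminate the signed quantity $\sigma_\Top \sigma_\Bottom$. Substituting the previous identity into $36(\varsigma_{ij} - u)^2 = d_{\Top\Bottom} Q^2$ and using $\varsigma_{ij}^2 = W_{i,j,\Top,\Bottom}$ lets me solve for $\sigma_\Top \sigma_\Bottom$ as the affine function $u \varsigma_{ij} - X/2$ of $\varsigma_{ij}$, where $X := u^2 + W_{i,j,\Top,\Bottom} - W_{i,k,j,\Top} - W_{i,k,j,\Bottom}$. Squaring both sides and invoking $(\sigma_\Top \sigma_\Bottom)^2 = W_{i,k,j,\Top} W_{i,k,j,\Bottom}$ gives an equation that, after replacing $\varsigma_{ij}^2$ one more time by $W_{i,j,\Top,\Bottom}$, collapses to the linear relation
\[
 4uX \varsigma_{ij} = 4u^2 W_{i,j,\Top,\Bottom} + X^2 - 4 W_{i,k,j,\Top} W_{i,k,j,\Bottom} \,;
\]
expanding $X^2$ matches this with $-a_{i,j,k}\varsigma_{ij} = b_{i,j,k}$. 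The main bookkeeping challenge is tracking orientation through the two squarings: the first absorbs the sign of the projected area $Q$, while the second absorbs whether $\Top$ and $\Bottom$ lie on the same or opposite side of the plane through $(i, k, j)$. In the spirit of \Cref{lemma:equation_distance_volume}, the final polynomial identity can additionally be cross-checked by substituting the generic parametrization of $\OCM$ by explicit coordinates in $(\R^3)^{V_\Bipyramid}$.
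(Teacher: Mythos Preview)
Your argument is correct. Placing $\Bottom$ at the origin and $\Top$ on the $z$-axis, the identity $6(\varsigma_{ij}-u)=hQ$ is immediate, and your two squarings cleanly eliminate first the projected signed area $Q$ and then the cross term $\sigma_\Top\sigma_\Bottom$. The final bookkeeping does match: with $X=u^2+W_{i,j,\Top,\Bottom}-W_{i,k,j,\Top}-W_{i,k,j,\Bottom}$ one has $a_{i,j,k}=-4uX$, and expanding $4u^2W_{i,j,\Top,\Bottom}+X^2-4W_{i,k,j,\Top}W_{i,k,j,\Bottom}$ reproduces $b_{i,j,k}$ on the nose. The coordinate normalization only requires $\Top\neq\Bottom$, so the identity holds on a Zariski-dense set of realizations and hence on all of $\OCM$; your closing remark about cross-checking via the generic parametrization makes this explicit.

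This is genuinely different from the paper, which simply delegates the verification to a computer-algebra notebook. Your route has the advantage of explaining \emph{why} the relation is linear in $\varsigma_{ij}$: the first squaring of $\varsigma_{ij}-u$ produces a term $\varsigma_{ij}^2$ that is absorbed into $W_{i,j,\Top,\Bottom}$, leaving an expression affine in $\varsigma_{ij}$; the second squaring again produces $\varsigma_{ij}^2=W_{i,j,\Top,\Bottom}$, and what remains is linear. It also makes transparent which geometric quantity each $W$-term encodes. The paper's approach, by contrast, is a black-box certificate that leaves no room for sign or coefficient errors, which your hand computation is more exposed to (though here everything checks out).
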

\begin{proof}
 See notebook~\cite[Section~3.2]{supportingCode}.
\end{proof}

\begin{corollary}
\label{corollary:rational_functions}
 For every $i, j \in \{1, \dotsc, n\}$, we have that both $d_{ij}$ and $\varsigma_{i,j}$
 are rational functions in $\{s_e\}_{e \in \Eq} \cup \{d_{e}\}_{e \in E_{\Bipyramid}} \cup \{d_{\Top\Bottom}\}$ in the function field of $\OCM$.
\end{corollary}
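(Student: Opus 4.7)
The plan is to proceed by induction on the cyclic equatorial distance $m(i,j) := \min(|i-j|,\, n - |i-j|)$ between the indices $i,j \in \{1,\dotsc,n\}$, alternating between \Cref{lemma:equation_distance_volume} and \Cref{lemma:oriented_volume_diagonal} to pass from pairs at small distance to pairs at larger distance. For $i = j$ both quantities are trivially zero, so that case is immediate.

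For the base case $m(i,j) = 1$, the pair $\{i,j\}$ is an equatorial edge, so $d_{ij}$ is already among the named generators, while $\varsigma_{i,j} = \pm\, s_{i,j}$ by \Cref{definition:volumes}, again a generator. For the inductive step with $m(i,j) = m \geq 2$, I pick an equatorial vertex $k$ lying strictly between $i$ and $j$ on the shorter arc, so that $m(i,k) < m$ and $m(k,j) < m$. By the inductive hypothesis, $d_{ik}$, $d_{kj}$, $\varsigma_{i,k}$, and $\varsigma_{k,j}$ are already rational functions in $\{s_e\}_{e \in \Eq} \cup \{d_e\}_{e \in E_{\Bipyramid}} \cup \{d_{\Top\Bottom}\}$.

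Applying \Cref{lemma:equation_distance_volume} to the triple $(i,j,k)$ yields
\[
 d_{ij} \;=\; \frac{288\,\varsigma_{i,k}\,\varsigma_{k,j} - b_{ijk}}{a_k}\,,
\]
where $a_k$ depends only on $d_{k\Top}$, $d_{k\Bottom}$, $d_{\Top\Bottom}$, and $b_{ijk}$ depends on those quantities together with $d_{i\Top}, d_{i\Bottom}, d_{j\Top}, d_{j\Bottom}, d_{ik}, d_{kj}$, all available by induction or by assumption. Once $d_{ij}$ is in hand, \Cref{lemma:oriented_volume_diagonal} gives
\[
 \varsigma_{i,j} \;=\; -\frac{b_{i,j,k}}{a_{i,j,k}}\,,
\]
where $a_{i,j,k}$ and $b_{i,j,k}$ are polynomials in $\varsigma_{i,k}$, $\varsigma_{k,j}$ and in the squared volumes $W_{i,k,j,\Bottom}$, $W_{i,k,j,\Top}$, $W_{i,j,\Top,\Bottom}$, each of which is a polynomial in $d$-variables that are now expressed rationally in the desired generators.

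The one technicality to address is that the denominators $a_k$ and $a_{i,j,k}$ must define nonzero elements of the function field $\R(\OCM)$; otherwise the displayed formulas would not produce legitimate rational functions. Since $\OCM$ is irreducible (as the Zariski closure of the image of a polynomial map from the irreducible space $(\R^3)^{V_{\Bipyramid}}$), it suffices to check that neither denominator vanishes identically as a polynomial on $(\R^3)^{V_{\Bipyramid}}$. The explicit form of $a_k$ as a quadratic in three squared distances, and the factorization $a_{i,j,k} = -4\,(\varsigma_{i,k}+\varsigma_{k,j})\bigl(\cdots\bigr)$, make this easy to verify on any generic configuration; this is the only — and minor — obstacle I anticipate.
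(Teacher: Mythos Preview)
Your proof is correct and follows essentially the same route as the paper: induction on the equatorial separation, using \Cref{lemma:equation_distance_volume} to recover $d_{ij}$ and then \Cref{lemma:oriented_volume_diagonal} to recover $\varsigma_{i,j}$ from data at smaller separation. The only cosmetic differences are that the paper inducts on the linear distance $|j-i|$ (fixing $k=j-1$) rather than the cyclic distance, and that it does not explicitly address the nonvanishing of the denominators $a_k$ and $a_{i,j,k}$, which you handle more carefully.
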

\begin{proof}
 We prove the statement by induction on $|j - i|$.
 Since $d_{ji} = d_{ij}$ and $\varsigma_{i,j} = -\varsigma_{j,i}$, we can suppose that $j > i$.
 For the base case $j = i+1$, then $\varsigma_{i,i+1} = s_{i,i+1}$, so the statement is proven.
 By choosing $k = j-1$, \Cref{lemma:equation_distance_volume} shows that $d_{ij}$ is a rational function in
 $\{\varsigma_{i, j-1}, \varsigma_{j-1, j}\} \cup \{ d_{i, j-1}, d_{j-1, j}\} \cup \{d_{u\Top}, d_{u\Bottom} \}_{u \in \{i, j-1, j\}} \cup \{d_{\Top\Bottom}\}$.
 Moreover, again by choosing $k = j-1$, \Cref{lemma:oriented_volume_diagonal} shows that $\varsigma_{i,j}$ is a rational function in $\{\varsigma_{i, j-1}, \varsigma_{j-1, j}\} \cup \{ d_{i, j-1}, d_{j-1, j}, d_{i, j} \} \cup \{d_{u\Top}, d_{u\Bottom} \}_{u \in \{i, j-1, j\}} \cup \{d_{\Top\Bottom}\}$: indeed, the squared volumes appearing in $a_{i,j,k}$ and $b_{i,j,k}$ can be expressed as polynomials in $\{ d_{i, j-1}, d_{j-1, j}, d_{i, j} \} \cup \{d_{u\Top}, d_{u\Bottom} \}_{u \in \{i, j-1, j\}} \cup \{d_{\Top\Bottom}\}$.
 Hence, by the inductive step, both $d_{ij}$ and $\varsigma_{i,j}$ are rational functions in the desired variables.
\end{proof}

\phantomsection
\addcontentsline{toc}{section}{References}
\bibliographystyle{alphaurl}
\bibliography{galois}

\bigskip
\bigskip

\textsc{(MG) University of Trieste,
Department of Mathematics, Informatics and Geosciences,
Via Valerio 12/1, 34127 Trieste, Italy}\\
Email address: \texttt{matteo.gallet@units.it}

\textsc{(GG) Johann Radon Institute for Computation and Applied Mathematics (RICAM), Austrian
Academy of Sciences, Linz, Austria}\\
Email address: \texttt{georg.grasegger@ricam.oeaw.ac.at}

\textsc{(JL) Department of Applied Mathematics, Faculty of Information Technology, Czech Technical University in Prague, Prague, Czech Republic}\\
Email address: \texttt{jan.legersky@fit.cvut.cz}

\textsc{(JS) Johannes Kepler University Linz, Research Institute for Symbolic Computation (RISC), Linz, Austria}\\
Email address: \texttt{jschicho@risc.jku.at}

\end{document}